\theoremstyle{definition}
\newtheorem{definition}{Definition}
\newtheorem{example}{Example}
\newtheorem{remark}{Remark}
\theoremstyle{plain}
\newtheorem{theorem}{Theorem}
\newtheorem{proposition}{Proposition}
\newtheorem{corollary}{Corollary}
\newcommand{\C}{\mathbb{C}}
\newcommand{\Proj}{\mathbb{P}}
\newcommand{\Z}{\mathbb{Z}}
\newcommand{\R}{\mathbb{R}}
\newcommand{\calO}{\mathcal{O}}
\newcommand{\calF}{\mathcal{F}}
\newcommand{\calE}{\mathcal{E}}
\newcommand{\calS}{\mathcal{S}}
\newcommand{\calC}{\mathcal{C}}
\newcommand{\Id}{\mathrm{Id}}
\DeclareMathOperator{\NS}{NS}
\DeclareMathOperator{\T}{T}
\DeclareMathOperator{\Pic}{Pic}
\DeclareMathOperator{\rank}{rank}
\DeclareMathOperator{\Image}{Im}
\DeclareMathOperator{\MW}{MW}
\DeclareMathOperator{\Aut}{Aut}
\DeclareMathOperator{\Amp}{Amp}
\DeclareMathOperator{\K}{K}
\DeclareMathOperator{\disc}{disc}
\begin{document}

\title{The Geometry and Moduli of {K3} Surfaces}

\author{Andrew Harder}
\address{Department of Mathematical and Statistical Sciences, 632 CAB, University of Alberta, Edmonton, Alberta, T6G 2G1, Canada}
\email{aharder@ualberta.ca} 
\thanks{A. Harder was supported by an NSERC PGS D scholarship and a University of Alberta Doctoral
Recruitment Scholarship.}

\author{Alan Thompson}
\address{Department of Mathematical and Statistical Sciences, 632 CAB, University of Alberta, Edmonton, Alberta, T6G 2G1, Canada}
\email{amthomps@ualberta.ca}
\thanks{A. Thompson was supported by a Fields-Ontario-PIMS postdoctoral fellowship with funding provided by NSERC, the Ontario Ministry of Training, Colleges and Universities, and an Alberta Advanced Education and Technology Grant.}

\begin{abstract} These notes will give an introduction to the theory of K3 surfaces. We begin with some general  results on K3 surfaces, including the construction of their moduli space and some of its properties. We then move on to focus on the theory of polarized K3 surfaces, studying their moduli, degenerations and the compactification problem. This theory is then further enhanced to a discussion of lattice polarized K3 surfaces, which provide a rich source of explicit examples, including a large class of lattice polarizations coming from elliptic fibrations. Finally, we conclude by discussing the ample and K\"ahler cones of K3 surfaces, and give some of their applications.\end{abstract}

\maketitle

\section{General Results on K3 Surfaces}

We begin by recalling the definition of a K3 surface.

\begin{definition} A \emph{K3 surface} $S$ is a smooth compact complex surface with trivial canonical bundle $\omega_S \cong \calO_S$ and $h^1(S,\calO_S) = 0$.
\end{definition}

\begin{remark} Note that an arbitrary K3 surface $S$ is not necessarily projective, but every K3 surface is  K\"{a}hler. This was first proved by Siu \cite{ek3sk} who, by treating the K3 case, completed the proof of a conjecture of Kodaira \cite[Sect. XII.1]{cm} stating that every smooth compact complex surface with even first Betti number is K\"{a}hler. A direct proof of this conjecture may be found in \cite[Thm. IV.3.1]{bpv}.
\end{remark}

Unless otherwise stated, throughout these notes $S$ will denote an arbitrary K3 surface. In the remainder of this section we will study the geometry of $S$, then use this to initiate our study of the moduli space of K3 surfaces. Our main reference for this section will be \cite[Chap. VIII]{bpv}.

\subsection{Hodge Theory}\label{section:hodge}

We begin by studying the Hodge theory of a K3 surface $S$. The Hodge diamond of $S$ has the form
\[\begin{array}{ccccc} && h^{0,0} &&\\ &h^{1,0} & & h^{0,1} & \\ h^{2,0} & & h^{1,1} & & h^{0,2} \\ & h^{2,1} & & h^{1,2} & \\ && h^{2,2} &&  \end{array}\quad =\quad \begin{array}{ccccc} && 1 &&\\ &0 & & 0 & \\ 1 & & 20 & & 1 \\ & 0 & & 0 & \\ && 1 &&  \end{array}.\]
We note that this is largely trivial: the only interesting behaviour happens in the second cohomology group. As we shall see, the structure of this cohomology group determines the isomorphism class of a K3 surface, so can be used to construct a moduli space for K3 surfaces. 

The second cohomology group $H^2(S,\Z)$ with the cup-product pairing $\langle \cdot , \cdot\rangle$ forms a lattice isometric to the \emph{K3 lattice}
\[ \Lambda_{\mathrm{K}3} := H \oplus H \oplus H \oplus (-E_8) \oplus (-E_8),\]
where $H$ is the hyperbolic plane (an even, unimodular, indefinite lattice of rank $2$) and $E_8$ is the even, unimodular, positive definite lattice of rank $8$ corresponding to the Dynkin diagram $E_8$. The lattice $\Lambda_{\mathrm{K}3}$ is a non-degenerate even lattice of rank $22$ and signature $(3,19)$ (for the reader unfamiliar with lattice theory, we have included a short appendix containing results and definitions relevant to these notes).

There are two important sublattices of $H^2(S,\Z)$ that appear frequently in the study of K3 surfaces. The first is the \emph{N\'{e}ron-Severi lattice} $\NS(S)$, given by
\[\NS(S) := H^{1,1}(S) \cap H^2(S,\Z) \]
(here we identify $H^2(S,\Z)$ with its image under the natural embedding $H^2(S,\Z) \hookrightarrow H^2(S,\C)$). By the Lefschetz theorem on $(1,1)$-classes \cite[Thm. IV.2.13]{bpv}, $\NS(S)$ is isomorphic to the Picard lattice $\mathrm{Pic}(S)$, with isomorphism induced by the first Chern class map. 

The second important sublattice of $H^2(S,\Z)$ is the \emph{transcendental lattice} $\T(S)$. It is defined to be the smallest sublattice  of $H^2(S,\Z)$ whose complexification contains a generator $\sigma$ of $H^{2,0}(S)$. In the case where $\NS(S)$ is nondegenerate (which happens, for instance, when $S$ is projective), then the transcendental lattice is equal to the orthogonal complement of $\NS(S)$ in $H^2(S,\Z)$.

The structure of the second cohomology of $S$ is an important object to study, as it determines the isomorphism class of $S$.

\begin{theorem}[Weak Torelli] \textup{\cite[Cor. VIII.11.2]{bpv}} \label{weaktorelli} Two K3 surfaces $S$ and $S'$ are isomorphic if and only if there is a lattice isometry $H^2(S,\Z) \to H^2(S',\Z)$, whose $\C$-linear extension $H^2(S,\C) \to H^2(S',\C)$ preserves the Hodge decomposition \textup{(}such an isometry is called a \emph{Hodge isometry}\textup{)}.
\end{theorem}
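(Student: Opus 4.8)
The plan is to reduce the statement to the \emph{strong Torelli theorem} for Kähler K3 surfaces \cite[\S VIII.11]{bpv}: if $\psi\colon H^2(S,\Z)\to H^2(S',\Z)$ is a Hodge isometry carrying the Kähler cone $\mathcal{K}_S\subset H^{1,1}(S,\R)$ onto $\mathcal{K}_{S'}$, then $\psi=f^*$ for a unique isomorphism $f\colon S'\to S$. (Both Kähler cones are nonempty since every K3 surface is Kähler; see the Remark above.) One implication of the theorem is immediate: an isomorphism $f\colon S\to S'$ induces $f^*\colon H^2(S',\Z)\to H^2(S,\Z)$, which is an isometry for the cup product and, being induced by a holomorphic map, respects the Hodge decomposition. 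So the real content is the converse, and the work is to promote an \emph{arbitrary} Hodge isometry to one that also matches the Kähler cones.

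So let $\phi\colon H^2(S,\Z)\to H^2(S',\Z)$ be a Hodge isometry. Being integral and compatible with the Hodge decomposition, $\phi$ restricts to an isometry $\NS(S)\to\NS(S')$ and maps $H^{1,1}(S,\R)$ isometrically onto $H^{1,1}(S',\R)$; in particular it carries the positive cone $\{x:\langle x,x\rangle>0\}$ to its counterpart for $S'$. This cone has two connected components, interchanged by $-\Id$, which is itself a Hodge isometry; post-composing $\phi$ with $-\Id$ if necessary, we may assume $\phi$ sends the component $\calC^+_S$ containing $\mathcal{K}_S$ onto the corresponding component $\calC^+_{S'}$. Now $\phi$ sends $(-2)$-classes in $\NS(S)$ to $(-2)$-classes in $\NS(S')$, hence sends the decomposition of $\calC^+_S$ into chambers cut out by the hyperplanes $\delta^\perp$ (over all $(-2)$-classes $\delta$) to the analogous decomposition of $\calC^+_{S'}$. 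The Kähler cone $\mathcal{K}_S$ is one such chamber, so $\phi(\mathcal{K}_S)$ is a chamber for $S'$. The Weyl group $W_{S'}$ generated by the reflections $s_\delta$ in the $(-2)$-classes of $\NS(S')$ permutes these chambers transitively, so there is $w\in W_{S'}$ with $w(\phi(\mathcal{K}_S))=\mathcal{K}_{S'}$. Each $s_\delta$ is an integral isometry fixing a generator $\sigma'$ of $H^{2,0}(S')$ (because $H^{1,1}\perp H^{2,0}$ under the cup product, so $\langle\delta,\sigma'\rangle=0$), hence a Hodge isometry; therefore $\psi:=w\circ\phi$ is a Hodge isometry carrying $\mathcal{K}_S$ onto $\mathcal{K}_{S'}$, and the strong Torelli theorem applies to give the desired isomorphism $S'\to S$.

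The only routine ingredient in this argument is the combinatorial description of the Kähler cone: that it is precisely one chamber of the $(-2)$-hyperplane arrangement in $\calC^+$, and that the Weyl group acts transitively on those chambers --- facts that fit naturally with the later discussion of ample and Kähler cones and can be recorded there. The genuinely hard input is the strong Torelli theorem itself, which I would state and cite rather than reprove: its proof combines the surjectivity of the period map with a delicate local analysis of the period map near K3 surfaces carrying $(-2)$-curves, controlling the monodromy action so that the abstract Hodge isometry is realized by an honest biholomorphism. That analysis is the real substance of \cite[\S VIII]{bpv}; the reduction above is all that is needed to pass from it to the weak statement.
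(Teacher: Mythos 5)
Your proposal is correct, and it is essentially the argument behind the cited result: the paper offers no proof of this theorem, simply quoting \cite[Cor. VIII.11.2]{bpv}, and in that reference the weak statement is deduced from the strong (K\"ahler) Torelli theorem exactly as you do --- compose the given Hodge isometry with $-\Id$ to match the components of the positive cone and with a suitable Weyl group element (reflections in $(-2)$-classes being Hodge isometries) to carry the K\"ahler chamber onto the K\"ahler chamber, then invoke the strong theorem. So your reduction matches the standard treatment and has no gaps, with the genuinely hard input (the strong Torelli theorem) cited rather than reproved, just as in the paper.
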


\subsection{The Period Mapping}

We can use the weak Torelli theorem to begin constructing a moduli space for K3 surfaces. We start by defining a \emph{marking} on the K3 surface $S$.

\begin{definition} \label{defn:marked} A \emph{marking} on $S$ is a choice of isometry $\phi\colon H^2(S,\Z) \to \Lambda_{\mathrm{K}3}$. We say that $(S,\phi)$ is a \emph{marked K3 surface}.
\end{definition}

Since the canonical bundle of $S$ is trivial, we have $H^{2,0}(S) := H^0(S,\Omega^2_S) = H^0(S,\calO_S)$. Let $\sigma \in H^{2,0}(S)$ be any nonzero element. Then $\sigma$ is a nowhere vanishing $2$-form on $S$. Using the Hodge decomposition, we may treat $\sigma$ as an element of $H^2(S,\C)$. This cohomology group carries a bilinear form $\langle \cdot,\cdot\rangle$, given by the $\C$-linear extension of the cup-product pairing, with respect to which we have $\langle\sigma,\sigma\rangle = 0$ and $\langle\sigma,\overline{\sigma}\rangle >0$. 

If $\phi$ is a marking for $S$ and $\phi_{\C}\colon H^2(S,\C) \to \Lambda_{\mathrm{K}3} \otimes \C$ is its $\C$-linear extension, then $\phi_{\C}(H^{2,0}(S))$ is a line through the origin in $\Lambda_{\mathrm{K}3} \otimes \C$ spanned by $\phi_{\C}(\sigma)$. Projectivising, we see that $\phi_{\C}(H^{2,0}(S))$ defines a point in
\[\Omega_{\mathrm{K}3} := \{[\sigma]\in \Proj(\Lambda_{\mathrm{K}3} \otimes \C) \mid \langle\sigma,\sigma\rangle = 0, \langle\sigma,\ \overline{\sigma}\rangle > 0\}.\]

$\Omega_{\mathrm{K}3}$ is a $20$-dimensional complex manifold called the \emph{period space of K3 surfaces}. The point defined by $\phi_{\C}(H^{2,0}(S))$ is the \emph{period point of the marked K3 surface $(S,\phi)$}.

The Weak Torelli theorem (Thm. \ref{weaktorelli}) gives that two K3 surfaces are isomorphic if and only if there are markings for them such that the corresponding period points are the same. 

Now we extend this idea to families. Let $\pi \colon \calS \to U$ be a flat family of K3 surfaces over a small contractible open set $U$ and let $S$ be a fibre of $\pi$. A choice of marking $\phi\colon H^2(S,\Z) \to \Lambda_{\mathrm{K3}}$ for $S$ can be extended uniquely to a marking $\phi_U\colon R^2\pi_*\Z \to (\Lambda_{\mathrm{K3}})_U$ for the family $\calS$, where $(\Lambda_{\mathrm{K3}})_U$ denotes the constant sheaf with fibre $\Lambda_{\mathrm{K3}}$ on $U$. Applying the above construction to the marked K3 surfaces in the family $\calS$, we obtain a holomorphic map $U \to \Omega_{\mathrm{K3}}$, called the \emph{period mapping} associated to the family $\pi\colon \calS \to U$. 

Applying this to the case where $\pi\colon \calS \to U$ is a representative of the versal deformation of $S$, one finds:

\begin{theorem}[Local Torelli] \textup{\cite[Thm. VIII.7.3]{bpv}} \label{localtorelli} For any marked K3 surface $S$, the period mapping from the versal deformation space of $S$ to $\Omega_{\mathrm{K}3}$ is a local isomorphism.\end{theorem}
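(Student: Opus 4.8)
The plan is to reduce the statement to the holomorphic inverse function theorem: since $\Omega_{\mathrm{K}3}$ is a $20$-dimensional complex manifold and the period mapping is holomorphic, it is enough to show that the versal deformation space of $S$ is smooth of dimension $20$ and that the differential of the period mapping is an isomorphism at the period point of $S$.

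For the dimension and smoothness of the versal deformation space I would appeal to Kuranishi theory. Since $\omega_S\cong\calO_S$, contraction with a nowhere-vanishing holomorphic $2$-form $\sigma$ gives an isomorphism of sheaves $\Theta_S\xrightarrow{\ \sim\ }\Omega^1_S$, $v\mapsto v\mathbin{\lrcorner}\sigma$. Hence, by Dolbeault's theorem, $H^i(S,\Theta_S)\cong H^i(S,\Omega^1_S)=H^{1,i}(S)$, and reading off the Hodge diamond gives $h^0(S,\Theta_S)=h^{1,0}=0$, $h^1(S,\Theta_S)=h^{1,1}=20$, and $h^2(S,\Theta_S)=h^{1,2}=0$. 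In particular the obstruction space $H^2(S,\Theta_S)$ vanishes, so by Kuranishi's theorem the versal deformation space is smooth of dimension $h^1(S,\Theta_S)=20$ (the vanishing of $H^0(S,\Theta_S)$ moreover makes the family universal, though this is not needed here).

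For the differential I would first identify the tangent space to $\Omega_{\mathrm{K}3}$ at the point $[\sigma]$. The tangent space to $\Proj(\Lambda_{\mathrm{K}3}\otimes\C)$ there is $\mathrm{Hom}(\C\sigma,(\Lambda_{\mathrm{K}3}\otimes\C)/\C\sigma)$, and differentiating the defining equation $\langle\sigma,\sigma\rangle=0$ cuts this down to the homomorphisms with image in $\sigma^\perp/\C\sigma$. Using the Hodge decomposition of $H^2(S,\C)$ together with $\langle\sigma,\sigma\rangle=0$, $\langle\sigma,\overline\sigma\rangle>0$ and the orthogonality $\langle H^{2,0},H^{1,1}\rangle=0$ (which holds for type reasons: the cup product of a $(2,0)$-class with a $(1,1)$-class is a $(3,1)$-class, hence zero on a surface), one finds $\sigma^\perp=H^{2,0}\oplus H^{1,1}$, so $\sigma^\perp/\C\sigma\cong H^{1,1}(S)$ and $T_{[\sigma]}\Omega_{\mathrm{K}3}\cong\mathrm{Hom}(\C\sigma,H^{1,1}(S))\cong H^{1,1}(S)$, the last isomorphism being evaluation at $\sigma$. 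Then I would invoke Griffiths' formula for the derivative of the period mapping: for a first-order deformation with Kodaira--Spencer class $\kappa\in H^1(S,\Theta_S)$, the derivative sends $\sigma$ to the cup product $\kappa\mathbin{\lrcorner}\sigma\in H^1(S,\Omega^1_S)$, formed using the contraction $\Theta_S\otimes\Omega^2_S\to\Omega^1_S$. But cup product with $\sigma$ is exactly the isomorphism $H^1(S,\Theta_S)\xrightarrow{\ \sim\ }H^1(S,\Omega^1_S)$ induced by the sheaf isomorphism $\Theta_S\xrightarrow{\ \sim\ }\Omega^1_S$ above. So, under the identification $T_{[\sigma]}\Omega_{\mathrm{K}3}\cong H^{1,1}(S)$, the differential of the period mapping is this isomorphism; the inverse function theorem then yields the claim.

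The step I expect to be the main obstacle is the use of Griffiths' description of the differential of the period mapping as cup product with the Kodaira--Spencer class: this is the one genuinely non-formal input, the remainder being dimension bookkeeping and the observation that contraction with a nowhere-vanishing $2$-form is an isomorphism of sheaves. A secondary input is Kuranishi's existence theorem together with the vanishing of the obstruction space, but for K3 surfaces the latter is immediate from the Hodge diamond.
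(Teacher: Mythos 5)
Your proposal is correct, and since the paper does not prove this theorem but cites it to Barth--Hulek--Peters--van de Ven (Thm.\ VIII.7.3), the relevant comparison is with that reference, whose argument is essentially the one you give: contraction with $\sigma$ identifies $\Theta_S$ with $\Omega^1_S$, so $H^0(\Theta_S)=H^2(\Theta_S)=0$ and $h^1(\Theta_S)=20$, the Kuranishi space is smooth of dimension $20$, and the Griffiths description of the differential exhibits it as the isomorphism $H^1(S,\Theta_S)\to H^{1,1}(S)\cong T_{[\sigma]}\Omega_{\mathrm{K}3}$, whence the inverse function theorem applies. No gaps; your flagged reliance on Griffiths' formula and Kuranishi theory matches the standard treatment.
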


This shows that the period mapping is well-behaved under small deformations of a marked K3 surface. Moreover, we have:

\begin{theorem}[Surjectivity of the Period Map] \textup{\cite[Cor. VIII.14.2]{bpv}} \label{surjectivity}  Every point of $\Omega_{\mathrm{K}3}$ occurs as the period point of some marked K3 surface.\end{theorem}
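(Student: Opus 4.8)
The plan is to show that the image of the period map is a non-empty open and closed subset of $\Omega_{\mathrm{K}3}$ meeting every connected component, which forces it to be all of $\Omega_{\mathrm{K}3}$. Gluing the local period mappings constructed above over the versal deformations of marked K3 surfaces produces the moduli space $\mathcal{M}$ of marked K3 surfaces --- a (possibly non-Hausdorff) $20$-dimensional complex manifold --- together with a period map $\calP\colon\mathcal{M}\to\Omega_{\mathrm{K}3}$. By the Local Torelli theorem (Theorem~\ref{localtorelli}), $\calP$ is a local isomorphism, hence an \emph{open} map, so $\calP(\mathcal{M})$ is open; and it is non-empty because K3 surfaces exist. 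Finally $\calP(\mathcal{M})$ is invariant both under complex conjugation $[\sigma]\mapsto[\bar\sigma]$ (replace a marked K3 surface $(S,\phi)$ by its conjugate $(\bar S,\phi)$) and under the action of $\mathrm{O}(\Lambda_{\mathrm{K}3})$ on $\Omega_{\mathrm{K}3}$ (replace $\phi$ by $g\circ\phi$), and hence meets both connected components of $\Omega_{\mathrm{K}3}$ (conjugation exchanges them). It thus remains to prove that $\calP(\mathcal{M})$ is closed, and here lies the content.

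The tool for moving periods around $\Omega_{\mathrm{K}3}$ is the twistor construction. For $x=[\sigma]\in\Omega_{\mathrm{K}3}$, the real and imaginary parts of $\sigma$ span an oriented positive-definite $2$-plane $P_x\subset\Lambda_{\mathrm{K}3}\otimes\R$, and $x\mapsto P_x$ identifies $\Omega_{\mathrm{K}3}$ with the space of oriented positive-definite $2$-planes. Let $(S,\phi)\in\mathcal{M}$ have period $x=\calP(S,\phi)$. As $S$ is K\"ahler (the Remark after Theorem~\ref{weaktorelli}) with $\omega_S\cong\calO_S$, Yau's solution of the Calabi conjecture furnishes a Ricci-flat K\"ahler metric in every K\"ahler class $\kappa$ on $S$; its holonomy lies in $\mathrm{SU}(2)=\mathrm{Sp}(1)$, so it is hyperk\"ahler, and its twistor space is a holomorphic family $\calX\to\Proj^1$ of marked K3 surfaces whose period map is an isomorphism onto the \emph{twistor line}
\[ T_W:=\{\,[\tau]\in\Omega_{\mathrm{K}3}\mid\tau\in W\otimes\C\,\}\cong\Proj^1,\qquad W:=P_x\oplus\R\,\phi(\kappa), \]
a positive-definite oriented $3$-plane. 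Hence $T_W\subseteq\calP(\mathcal{M})$. Two refinements make this powerful: (i) for the \emph{very general} period point $x$ --- those outside a countable union of hypersurfaces, for which $\NS(S)=0$ --- the K\"ahler cone of $S$ is a full connected component of $\{v\in H^{1,1}(S,\R)\mid v^2>0\}$, so \emph{every} positive-definite $3$-plane $W$ containing $P_x$ occurs, and therefore every such $T_W$ lies in $\calP(\mathcal{M})$; (ii) a dimension count shows that chains of twistor lines of bounded length through a point sweep out an entire neighbourhood of it.

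Granting (i) and (ii), one concludes by a combinatorial/linear-algebraic argument on the space of oriented positive $2$-planes: any two points in one connected component of $\Omega_{\mathrm{K}3}$ are joined by a finite chain of twistor lines with very general junction points (the relevant equivalence classes are open by (ii), hence also closed, hence whole components). Starting from a very general $x_0\in\calP(\mathcal{M})$ (which exists as $\calP(\mathcal{M})$ is open and non-empty) and propagating along such a chain, each twistor line in turn lies in $\calP(\mathcal{M})$ by (i); so $\calP(\mathcal{M})$ contains an entire connected component, and by the symmetries above it equals $\Omega_{\mathrm{K}3}$.

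I expect the main obstacle to be precisely the two refinements of the middle paragraph: proving that the very general K3 surface has K\"ahler cone equal to a component of its positive cone --- which rests on the structure of the K\"ahler cone and, ultimately, on every K3 surface being K\"ahler --- and executing the ``generic twistor line'' dimension count that makes chains of twistor lines sweep out neighbourhoods and connect the period domain. A more classical alternative, nearer to \cite{bpv}, replaces the twistor input by an explicit computation of the periods of Kummer surfaces together with the theory of degenerations of K3 surfaces; the essential difficulty there is the same --- showing that an appropriate limit of K3 surfaces is again a smooth K3 surface.
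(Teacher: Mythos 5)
The paper itself gives no proof of this theorem: it simply cites \cite[Cor. VIII.14.2]{bpv}, where the argument runs through the density of periods of (special) Kummer surfaces in $\Omega_{\mathrm{K}3}$ together with a delicate limiting/closedness argument — essentially the ``classical alternative'' you mention in your last paragraph. Your route is instead the modern twistor-theoretic proof (Todorov, Looijenga, Siu, as streamlined in Huybrechts' notes \cite[Chap. 7]{lok3s}): glue versal deformations into the non-Hausdorff moduli space of marked K3 surfaces, use Local Torelli (Thm.~\ref{localtorelli}) for openness of the image, and propagate along twistor lines attached via Yau's theorem, using that a very general K3 has $\NS(S)=0$ and hence K\"ahler cone equal to a full component of its positive cone (this is where Siu's theorem that every K3 is K\"ahler and the description of the K\"ahler cone, Thm.~\ref{theorem:Kahler2}, enter). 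This is a correct and genuinely different strategy: it trades BPV's explicit Kummer-surface computations and degeneration analysis for the analytic input of the Calabi conjecture plus a linear-algebraic connectivity statement about chains of generic twistor lines, and it generalizes to higher-dimensional hyperk\"ahler manifolds, which the Kummer approach does not. As an outline it is sound, but be aware that your two ``refinements'' (i) and (ii) — the K\"ahler-cone statement for very general periods and the claim that bounded chains of twistor lines with very general junction points connect the period domain (equivalently sweep out neighbourhoods) — are precisely the substance of the proof, and you have only asserted them; in particular (ii) needs the junction points of the chains to be very general for the propagation step to iterate. One small factual slip: $\Omega_{\mathrm{K}3}$ is \emph{connected}, since the ambient lattice has signature $(3,19)$ and the orientation of a positive $2$-plane can be reversed inside a positive $3$-plane; it is the polarized period spaces $\Omega_{2k}$ and $\Omega_{L^\perp}$ (signature $(2,n)$ orthogonal complements) that have two components. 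So the step invoking conjugation and $\mathrm{O}(\Lambda_{\mathrm{K}3})$-equivariance to ``reach both components'' is unnecessary, though harmless.
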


Putting these elements together, we seem to be close to constructing a (coarse) moduli space for K3 surfaces: we have a space $\Omega_{\mathrm{K3}}$ whose points correspond to marked K3 surfaces, and any family of marked K3 surfaces $\pi\colon \calS \to U$ gives rise to a map $U \to \Omega_{\mathrm{K3}}$. All that remains is to quotient $\Omega_{\mathrm{K}3}$ by the action of the group $\Gamma$ of isometries of $\Lambda_{\mathrm{K}3}$ to identify period points corresponding to different markings on the same K3 surface. However, on closer inspection one finds that this group action is not properly discontinuous, so the quotient will have undesirable properties: in particular, it won't be Hausdorff. More details may be found in \cite[Sect. VIII.12]{bpv}.

\section{Polarized K3 Surfaces} \label{sect:polarization}

One way to solve this problem is to restrict our attention to a subclass of K3 surfaces that have better properties: the \emph{pseudo-polarized} K3 surfaces.

\begin{definition} \label{polarizeddefn} A (\emph{pseudo}-)\emph{polarized K3 surface of degree $2k$} (for $k>0$) is a pair $(S,h)$ consisting of a K3 surface $S$ and a primitive (pseudo-)ample class $h \in \NS(S)$ with $\langle h,h\rangle = 2k$.

Two  (pseudo-)polarized K3 surfaces $(S,h)$ and $(S',h')$ of degree $2k$ are \emph{equivalent} if there exists an isomorphism $f\colon S \to S'$ of K3 surfaces such that $f^*(h') = h$.
\end{definition}

\begin{remark} \label{remark:projective} If $\NS(S)$ contains a pseudo-ample class, then $S$ is a Moishezon manifold by \cite[Thm. 2.2.15]{hmibk}. As $S$ is also K\"{a}hler, \cite[Thm. 2.2.26]{hmibk} implies that $S$ is projective. Thus every pseudo-polarized K3 surface is projective.
\end{remark}

The geometry of pseudo-polarized K3 surfaces was studied by Mayer \cite{fk3s}. The following easy consequence of Props. 1 and 2 from his paper is particularly useful for studying them explicitly.

\begin{proposition} \label{prop:mayer} Let $(S,h)$ be a pseudo-polarized K3 surface of degree $2k$ and let $D$ be an effective divisor on $S$ with $[D] = h$ in $\NS(S)$. Then the map $f\colon S \to \Proj(H^0(S,\calO_S(D)))$ defined by the linear system $|D|$ is
\begin{itemize}
\item \textup{(}generic case\textup{)} a birational morphism onto a normal surface of degree $2k$ in $\Proj^{k+1}$ if the general member of $|D|$ is a smooth non-hyperelliptic curve; or
\item \textup{(}hyperelliptic case\textup{)} a morphism of degree $2$ onto a normal surface of degree $k$ in $\Proj^{k+1}$ if the general member of $|D|$ is a smooth hyperelliptic curve; or
\item \textup{(}unigonal case\textup{)} a regular map $S \to \Proj^{k+1}$ whose image is a rational curve of degree $k+1$ if the general member of $|D|$ is reducible.
\end{itemize}
\end{proposition}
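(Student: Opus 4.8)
The plan is to extract the statement from Riemann--Roch and adjunction on $S$ together with two structural facts about linear systems on K3 surfaces supplied by Mayer's Propositions~1 and~2: that a nef and big class $D$ for which $|D|$ has no fixed component is automatically base-point free; and that if $|D|$ \emph{does} have a fixed component then $D\sim(k+1)E+\Gamma$ for a smooth elliptic curve $E$ with $E^2=0$ (so $|E|$ is a base-point-free pencil) and a $(-2)$-curve $\Gamma$ with $E\cdot\Gamma=1$, where $\Gamma$ is the fixed part. Since $h$ is pseudo-ample, $D$ is nef and big with $D^2=2k$; hence $h^2(\calO_S(D))=h^0(\calO_S(-D))=0$ as $D$ is effective and nonzero, and Riemann--Roch gives $h^0(\calO_S(D))=k+2+h^1(\calO_S(D))\ge k+2\ge 3$, while adjunction (using $K_S=0$) shows a smooth member of $|D|$ has genus $1+\tfrac12 D^2=k+1$. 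I would first note that the three cases are exhaustive: if $|D|$ has a fixed part its general member is reducible (the unigonal case); otherwise $|D|$ is base-point free, so by Bertini its general member $C$ is smooth and irreducible, and is either non-hyperelliptic or hyperelliptic.

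In the base-point-free cases, the exact sequence $0\to\calO_S\to\calO_S(D)\to\calO_C(D)\to 0$ together with $\calO_C(D)\cong\omega_C$ (adjunction) and $h^1(S,\calO_S)=0$ gives $h^0(\calO_S(D))=1+h^0(\omega_C)=k+2$ and shows the restriction $H^0(S,\calO_S(D))\to H^0(C,\omega_C)$ is surjective. Thus $f=\phi_{|D|}$ is a morphism to $\Proj^{k+1}$ restricting on $C$ to the canonical map of $C$, and its degree equals that of $f|_C$ onto its image, since $C=f^{-1}(H)$ for the corresponding hyperplane $H$. As $\calO_S(D)$ is big, $f$ is generically finite onto a non-degenerate surface $\bar S\subset\Proj^{k+1}$, so $\deg\bar S\ge k$ by the minimal-degree bound, while $\deg(f)\cdot\deg\bar S=(f^*\calO(1))^2=D^2=2k$; this leaves only $(\deg f,\deg\bar S)=(1,2k)$ or $(2,k)$. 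If $C$ is non-hyperelliptic its canonical map is an embedding of degree $2g-2=2k$, forcing $\deg f=1$, so $f$ is a birational morphism onto a surface of degree $2k$ in $\Proj^{k+1}$; if $C$ is hyperelliptic its canonical map has degree $2$ onto a rational normal curve of degree $g-1=k$, forcing $\deg f=2$, so $\bar S$ has degree $k$.

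For normality of $\bar S$: in the hyperelliptic case $\bar S$ has minimal degree $k$ in $\Proj^{k+1}$, and every non-degenerate surface of minimal degree (a rational normal scroll, the Veronese surface, or a cone over a rational normal curve) is normal; in the generic case $\bar S$ is the image of a birational morphism from the smooth K3 surface $S$, and after Stein factorization one identifies it with the contraction of the finitely many $(-2)$-curves orthogonal to $D$, so it has only rational double points and is normal --- this normality being already part of the statement of Mayer's Proposition~2. In the unigonal case, from $D\sim(k+1)E+\Gamma$ a short computation with the pencil $|E|$ gives $h^0(\calO_S((k+1)E))=k+2=h^0(\calO_S(D))$ with $\Gamma$ fixed, so $\phi_{|D|}$ is the composite of the elliptic pencil $S\to\Proj^1$ with the degree-$(k+1)$ Veronese embedding $\Proj^1\hookrightarrow\Proj^{k+1}$, and its image is a rational curve of degree $k+1$.

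The main obstacle is precisely the structural input drawn from Mayer's Propositions~1 and~2 --- base-point-freeness in the absence of a fixed component, the exact form $D\sim(k+1)E+\Gamma$ in the unigonal case, and the normality of the image in the birational case --- which genuinely depend on the structure of the effective and nef cones of a K3 surface and the geometry of its $(-2)$-curves. Granting these, the remainder is only Riemann--Roch, adjunction, and the classical theory of canonical curves and surfaces of minimal degree, which is why the proposition is an ``easy consequence'' of Mayer's work.
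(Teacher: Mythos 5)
Your proposal is correct and takes essentially the approach the paper intends: the paper gives no written proof, presenting the proposition as an ``easy consequence'' of Mayer's Props.~1 and~2, and your argument supplies exactly the Riemann--Roch, adjunction, canonical-curve and minimal-degree bookkeeping that derivation requires, with the genuinely K3-specific inputs (base-point freeness in the absence of a fixed component, the decomposition $D \sim (k+1)E + \Gamma$ in the unigonal case, normality of the image via contraction of $(-2)$-curves) correctly attributed to Mayer. No gaps worth flagging.
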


Using this, we can introduce two of the most widely studied classes of K3 surfaces.

\begin{example}[Sextic double planes] \label{ex:deg2} Suppose first that $(S,h)$ is a pseudo-polarized K3 surface of degree $2$ and let $D$ be an effective divisor on $S$ with $[D] = h$. Then the general member of $|D|$ is either a smooth hyperelliptic curve or is reducible.

In the hyperelliptic case, which for degree $2$ is generic, the linear system $|D|$ defines a generically $2:1$ map from $S$ onto $\Proj^2$. We thus see that $S$ is birational to a double cover of $\Proj^2$ ramified over a sextic curve. Such surfaces may be realized as sextic hypersurfaces in the weighted projective space $\mathbb{W}\Proj(1,1,1,3)$.

In the unigonal case, which for degree $2$ can only occur when $D$ is pseudo-ample but not ample, the linear system $|D|$ defines a regular map from $S$ onto a smooth conic in $\Proj^2$. The general fibre of this map is a smooth elliptic curve.
\end{example}

\begin{example}[Quartic hypersurfaces] \label{ex:deg4} For our second example, suppose that $(S,h)$ is a pseudo-polarized K3 surface of degree $4$ and let $D$ be an effective divisor on $S$ with $[D] = h$. Then all three cases from Prop. \ref{prop:mayer} can occur.

In the generic case the linear system $|D|$ defines a birational morphism onto a quartic hypersurface in $\Proj^3$.

In the hyperelliptic case, the linear system $|D|$ defines a generically $2:1$ map from $S$ onto a quadric hypersurface in $\Proj^3$. This hypersurface is isomorphic to $\Proj^1 \times \Proj^1$, so $S$ is birational to a double cover of $\Proj^1 \times \Proj^1$ ramified over a curve of bidegree $(4,4)$. Such surfaces may be realized as complete intersections of degree $(2,4)$ in the weighted projective space $\mathbb{W}\mathbb{P}(1,1,1,1,2)$, where the degree two relation does not involve the degree two variable (since if it did then we could eliminate it, putting us back in the generic case of a quartic hypersurface in $\Proj^3$).

Finally, in the unigonal case the linear system $|D|$ defines a regular map from $S$ onto a twisted cubic in  $\Proj^3$. The general fibre of this map is again a smooth elliptic curve.
\end{example}

\subsection{Moduli of Polarized K3 Surfaces} \label{sect:polarizedmoduli}

For polarized K3 surfaces we have an upgraded version of the Weak Torelli Theorem (Thm. \ref{weaktorelli}), which will enable us to build a moduli space for them.

\begin{theorem}[Strong Torelli] \textup{\cite[Cor. VIII.3.12 and Thm. VIII.11.1]{bpv}} \label{strongtorelli}  Let $(S,h)$ and $(S',h')$ be polarized K3 surfaces of the same degree $2k$. Assume that there is a Hodge isometry $\varphi\colon H^2(S',\Z) \to H^2(S,\Z)$ with $\varphi(h')=h$. Then there is a \emph{unique} isomorphism $f\colon S \to S'$ with $\varphi = f^*$ \textup{(}i.e. $S$ and $S'$ are equivalent\textup{)}. 
\end{theorem}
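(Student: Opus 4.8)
The plan is to reduce the assertion to an \emph{effective} form of the global Torelli theorem and then prove that form by the Burns--Rapoport method, via Kummer surfaces. I begin with uniqueness. If $f_1,f_2\colon S\to S'$ both induced $\varphi$, then $\sigma:=f_2\circ f_1^{-1}$ would be an automorphism of $S'$ with $\sigma^*=\Id$ on $H^2(S',\Z)$; since $h^{1,0}=h^{3,0}=0$ it acts trivially on all of $H^*(S',\Z)$, and in particular fixes the holomorphic $2$-form, so $\sigma$ is symplectic. As $(S',h')$ is polarized, $\Aut(S',h')$ is finite (a K3 surface carries no holomorphic vector fields, so the automorphism group of a polarized K3 surface is a finite group), whence $\sigma$ has finite order; by Nikulin's classification of symplectic automorphisms of K3 surfaces, a nontrivial one acts nontrivially on $H^2$, so $\sigma=\Id$ and $f_1=f_2$.

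Next I would show that $\varphi$ is automatically \emph{effective}, i.e.\ that it carries the K\"ahler (= ample) cone $K_{S'}$ of $S'$ onto $K_S$. Being a Hodge isometry, $\varphi$ preserves the $(1,1)$-part and the integral lattice, hence preserves the N\'eron--Severi lattice and sends $(-2)$-classes to $(-2)$-classes; it therefore carries the chamber decomposition of the positive cone of $S'$ (the chambers being cut out by the hyperplanes orthogonal to $(-2)$-classes) to that of $S$, and since $\varphi(h')=h$ lies in the same component, it maps positive cone to positive cone. Now $\varphi(K_{S'})$ is a chamber whose closure contains $\varphi(h')=h$; but $h$ is ample, so interior to $K_S$, and a chamber whose closure meets the interior of $K_S$ must equal $K_S$. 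Thus $\varphi(K_{S'})=K_S$. Combined with the Weak Torelli theorem (Thm.~\ref{weaktorelli}), which supplies \emph{some} isomorphism $u\colon S\to S'$, the problem reduces to: \emph{the effective Hodge isometry $\theta:=\varphi\circ(u^*)^{-1}$ of $H^2(S,\Z)$ \textup{(}which satisfies $\theta(K_S)=K_S$\textup{)} is of the form $g^*$ for some $g\in\Aut(S)$}. For then $f:=u\circ g$ satisfies $f^*=g^*\circ u^*=\theta\circ u^*=\varphi$, and $f^*(h')=\varphi(h')=h$ gives the asserted equivalence of polarized surfaces.

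To prove this last statement I would follow Burns--Rapoport in three steps. \emph{Kummer case:} if $S=\mathrm{Km}(A)$ is the Kummer surface of an abelian surface $A$, then $A$ is recovered from the Hodge structure together with the configuration of the sixteen exceptional $(-2)$-curves (via the Torelli theorem for abelian surfaces), and a direct analysis --- splitting $H^2(S,\Z)$ into the Kummer lattice, handled by the combinatorics of the configuration, and a scaled copy of $H^2(A,\Z)$, handled by abelian Torelli --- shows that an effective Hodge isometry of $H^2(S,\Z)$ is induced by an automorphism of $A$, which descends to $\mathrm{Km}(A)$. \emph{Density:} using surjectivity of the period map (Thm.~\ref{surjectivity}) and the explicit description of $\Omega_{\mathrm{K}3}$, the period points of marked Kummer surfaces are dense in $\Omega_{\mathrm{K}3}$. \emph{Spreading out:} place $S$ in a family $\pi\colon\calS\to U$ over a polydisc, with $U\to\Omega_{\mathrm{K}3}$ a local isomorphism (Local Torelli, Thm.~\ref{localtorelli}), restricted to the sublocus on which (via the marking) $\theta$ remains a Hodge isometry of the fibres, and transport $\theta$ to a family of effective Hodge isometries; the set of $t\in U$ over which this isometry is realized by an automorphism of $S_t$ is open (by Local Torelli, an automorphism of one fibre propagates to nearby fibres since the period map is a local isomorphism) and closed (by a properness/limit argument for isomorphisms of K3 fibres), and it is nonempty because it meets the dense set of Kummer periods where the first step applies; hence it is all of $U$ and contains the point representing $S$.

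The main obstacle is the closedness in the spreading-out argument: one must show that a limit of fibrewise isomorphisms compatible with a fixed lattice isometry is again such an isomorphism, which amounts to a compactness statement for the space of isomorphisms between K3 surfaces and is the technical heart of the Burns--Rapoport proof. The Kummer step, though elementary, also requires careful bookkeeping of the exceptional configuration.
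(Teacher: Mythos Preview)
The paper does not prove this statement; it is quoted as a known result with a reference to \cite[Cor.~VIII.3.12 and Thm.~VIII.11.1]{bpv}, so there is no in-paper argument to compare against. Your outline is the classical Burns--Rapoport proof, which is precisely the route taken in the cited reference, so in that sense your approach matches the source the paper defers to.

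Two minor remarks on the sketch. First, invoking Nikulin's classification for uniqueness is heavier than needed: the fact that an automorphism of a K3 surface acting trivially on $H^2$ is the identity is more elementary (finite order via $h^0(T_S)=0$ and compactness of the isometry group of a Ricci-flat metric, then a holomorphic Lefschetz fixed-point computation), and in \cite{bpv} it is established prior to and independently of any classification of symplectic automorphisms. Second, your reduction via Weak Torelli to a \emph{self}-isometry $\theta$ is logically permissible here since the paper states Weak Torelli separately, but in the standard development Weak Torelli is a \emph{corollary} of Strong Torelli (compose an arbitrary Hodge isometry with $\pm\Id$ and Picard--Lefschetz reflections to make it effective, then apply the strong form), so the Burns--Rapoport argument is usually run directly for an effective Hodge isometry between two marked K3 surfaces rather than first reduced to the automorphism case. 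Your identification of the closedness step as the technical crux is accurate.
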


Following our previous discussion, we next construct a period space for pseudo-polarized K3 surfaces. Fix once and for all a primitive class $h \in \Lambda_{\mathrm{K}3}$ with $\langle h,h\rangle = 2k >0$. Then a \emph{marked} \mbox{(\emph{pseudo}-)}\emph{polarized K3 surface of degree $2k$} is a marked K3 surface $(S,\phi)$ such that  $\phi^{-1}(h)$ is a (pseudo-)ample class in $\NS(S)$.

If $(S,\phi)$ is a marked pseudo-polarized K3 surface of degree $2k$ and if $\sigma \in H^{2,0}(S)$ is any nonzero element, then we have $\langle\sigma,\sigma\rangle = 0$, $\langle\sigma,\overline{\sigma}\rangle > 0$ and $\langle\sigma,\phi^{-1}(h)\rangle = 0$. So the period point of $(S,\phi)$ lies in
\[\Omega_{2k} := \{[\sigma]\in \Proj(\Lambda_{\mathrm{K}3} \otimes \C) \mid \langle\sigma,\sigma\rangle = 0,\ \langle\sigma,\overline{\sigma}\rangle > 0,\ \langle\sigma,h\rangle = 0\} \subset \Omega_{\mathrm{K}3}.\]
$\Omega_{2k}$ is called the \emph{period space of pseudo-polarized K3 surfaces of degree $2k$}. It is a $19$-dimensional complex manifold with two connected components, each of which is a bounded symmetric domain of type IV \cite[Rmk. VIII.22.2]{bpv}.

By the Surjectivity of the Period Map (Thm. \ref{surjectivity}), every point of $\Omega_{2k}$ corresponds to a marked K3 surface $(S,\phi)$. Furthermore, for any generator $\sigma \in H^{2,0}(S)$ we have $\langle \sigma, \phi^{-1}(h) \rangle = 0$, so $\phi^{-1}(h) \in \NS(S)$. Thus if $\phi^{-1}(h)$ is an ample class, then $(S,\phi)$ will be a marked polarized K3 surface of degree $2k$.

Using this and the Torelli Theorems (Thms. \ref{localtorelli} and \ref{strongtorelli}) we can construct a coarse moduli space for polarized K3 surfaces of degree $2k$. First, however, we have to remove the points in $\Omega_{2k}$ corresponding to the K3 surfaces that are pseudo-polarized but not polarized. 

If a marked pseudo-polarized K3 surface $(S,\phi)$ is not polarized, then the Hodge Index Theorem \cite[Cor. IV.2.16]{bpv} and the genus formula imply that it must contain a rational curve $C$, such that the class $\delta$ of $C$ in $\NS(S)$ satisfies $\langle \delta, \delta \rangle = -2$ and $\langle \delta, \phi^{-1}(h) \rangle = 0$. The converse is also true: if $\NS(S)$ contains such a $\delta$, then by \cite[Prop. VIII.3.7]{bpv} there exists a rational curve $C$ on $S$ with $\langle[C],\phi^{-1}(h)\rangle = 0$, so $\phi^{-1}(h)$ is not ample and $(S,\phi)$ is not polarized. Using this, we see that a marked pseudo-polarized K3 surface $(S,\phi)$ is not polarized if and only if its period point $[\sigma]$ is orthogonal to a point in the set
\[ \Delta_{2k} := \{\delta \in  \Lambda_{\mathrm{K}3} \mid \langle \delta, \delta \rangle = -2, \ \langle \delta, h \rangle = 0 \}.\]

For each $\delta \in \Delta_{2k}$, define a hyperplane
\begin{equation} \label{eq:H} H_{\delta} := \{[\sigma]\in  \Proj(\Lambda_{\mathrm{K}3} \otimes \C) \mid \langle \sigma, \delta \rangle = 0\}.\end{equation}
Then define
\[ \Omega_{2k}^0 := \Omega_{2k} - \bigcup_{\delta \in \Delta_{2k}} (H_{\delta} \cap \Omega_{2k}).\]
$ \Omega_{2k}^0$ is called the \emph{period space of polarized K3 surfaces}. The Torelli Theorems (Thms. \ref{localtorelli} and \ref{strongtorelli}) and the Surjectivity of the Period Map (Thm. \ref{surjectivity}) show that its points are in bijection with marked polarized K3 surfaces of degree $2k$.

It just remains to quotient by an appropriate group to identify period points corresponding to different markings on the same K3 surface. Let $\Gamma(h)$ denote the group of isometries of $\Lambda_{\mathrm{K}3}$ that fix the class $h$. Then $\Gamma(h)$ acts properly discontinuously on $\Omega_{2k}^0$, so the quotient $\Gamma(h) \setminus \Omega_{2k}^0$ will not have the same problems that we experienced before. Thus we have:

\begin{theorem} \textup{\cite[Thm. VIII.22.4]{bpv}} The quotient
\[\calF_{2k}^0 := \Gamma(h) \setminus \Omega_{2k}^0\]
is a coarse moduli space for polarized K3 surfaces of degree $2k$.
\end{theorem}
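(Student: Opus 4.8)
There are two things to establish: that $\calF_{2k}^0$ is a normal complex analytic space whose points are in natural bijection with equivalence classes of polarized K3 surfaces of degree $2k$, and that it has the universal property of a coarse moduli space, i.e. every flat family of such surfaces induces a holomorphic classifying map to $\calF_{2k}^0$ in a functorial way, with $\calF_{2k}^0$ initial among complex spaces admitting such a natural transformation. I would first observe that every $\gamma\in\Gamma(h)$ permutes the set $\Delta_{2k}$ and hence the hyperplanes $H_\delta$ of \eqref{eq:H}, so $\Gamma(h)$ preserves $\Omega_{2k}^0$; together with the proper discontinuity quoted above, this makes $\calF_{2k}^0=\Gamma(h)\setminus\Omega_{2k}^0$ a normal complex analytic space with $q\colon\Omega_{2k}^0\to\calF_{2k}^0$ an open holomorphic quotient map. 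For the bijection on points: a point of $\calF_{2k}^0$ is the $\Gamma(h)$-orbit of some $[\sigma]\in\Omega_{2k}^0$, which by Surjectivity of the Period Map (Thm.~\ref{surjectivity}) and the discussion preceding the theorem is the period point of a marked polarized K3 surface $(S,\phi)$ of degree $2k$. If $(S,\phi)$ and $(S',\phi')$ have period points in one $\Gamma(h)$-orbit, say related by $\gamma$ with $\gamma(h)=h$, then $(\phi')^{-1}\circ\gamma\circ\phi$ is a Hodge isometry $H^2(S,\Z)\to H^2(S',\Z)$ carrying $\phi^{-1}(h)$ to $(\phi')^{-1}(h)$, so Strong Torelli (Thm.~\ref{strongtorelli}) gives an equivalence of polarized K3 surfaces $S\cong S'$; the converse is immediate. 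Thus the points of $\calF_{2k}^0$ correspond bijectively to equivalence classes of polarized K3 surfaces of degree $2k$.

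Next I would build the classifying map. Let $\pi\colon\calX\to B$ be a flat family of polarized K3 surfaces of degree $2k$, equipped with a section of $R^2\pi_*\Z$ restricting to the polarization on each fibre. Cover $B$ by opens $U_i$ on which the local system $R^2\pi_*\Z$ is trivial; over each, choose a marking of the family sending the polarization section to the fixed class $h$, and form the associated (holomorphic) period map $p_i\colon U_i\to\Omega_{2k}$. Since each fibre is genuinely polarized it carries no $(-2)$-class orthogonal to its polarization, so by the analysis preceding the theorem $p_i$ in fact lands in $\Omega_{2k}^0$. On an overlap $U_i\cap U_j$ the two markings differ by an isometry of $\Lambda_{\mathrm{K}3}$ fixing $h$, hence by an element of $\Gamma(h)$, so $q\circ p_i=q\circ p_j$ there; the $q\circ p_i$ therefore patch to a holomorphic $\rho_\pi\colon B\to\calF_{2k}^0$, independent of the marking choices (another choice changes $p_i$ by $\Gamma(h)$, killed by $q$) and sending $b$ to the point classifying $\calX_b$ by the previous paragraph. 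Compatibility with base change is immediate since markings and period maps pull back, so $\pi\mapsto\rho_\pi$ is a natural transformation from the moduli functor to $\mathrm{Hom}(-,\calF_{2k}^0)$.

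It remains to check that $\calF_{2k}^0$ is initial among such targets. Using Local Torelli (Thm.~\ref{localtorelli}): the period map of the versal deformation of a marked polarized K3 surface is a local isomorphism onto an open subset of $\Omega_{2k}$, which — as ampleness is an open condition — meets $\Omega_{2k}^0$ near the given period point; shrinking and pulling back, I obtain an open cover $\{V_\alpha\}$ of $\Omega_{2k}^0$ with tautological families $\calX_\alpha\to V_\alpha$ whose period maps are the inclusions $V_\alpha\hookrightarrow\Omega_{2k}^0$. Given a complex space $N$ and a natural transformation $\Phi$ from the moduli functor to $\mathrm{Hom}(-,N)$, applying $\Phi$ to these families yields holomorphic maps $V_\alpha\to N$; by the uniqueness clause of Strong Torelli the tautological family over $\gamma(V_\alpha)$ is isomorphic to the $\gamma$-pullback of the one over $V_\alpha$ for each $\gamma\in\Gamma(h)$, so these maps are $\Gamma(h)$-invariant and agree on overlaps, hence descend to a holomorphic $g\colon\calF_{2k}^0\to N$. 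One checks $g\circ\rho_\pi=\Phi(\pi)$ for every family $\pi$ by working locally, where $\pi$ coincides with the pullback of a tautological family (again by Local Torelli) and naturality of $\Phi$ applies; and $g$ is unique because $q$ is surjective and $\calF_{2k}^0$ carries the correct point set.

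The bijection on points is essentially formal given the Torelli theorems and surjectivity; the genuine work lies in the universality step. The main obstacle I anticipate is upgrading the uniqueness clause of Strong Torelli from a statement about individual surfaces to a descent statement for families — i.e. producing honest isomorphisms between the tautological families over $V_\alpha$ and $\gamma(V_\alpha)$, with matching compatibilities — so that the maps $V_\alpha\to N$ actually descend, and then verifying that the resulting $g$ is compatible with \emph{all} classifying maps $\rho_\pi$ and not merely the tautological ones. A secondary point requiring care is that, because $\Gamma(h)$ acts on $\Omega_{2k}^0$ with fixed points (polarized K3 surfaces can have nontrivial automorphisms fixing the polarization), $\calF_{2k}^0$ is only a coarse and not a fine moduli space, so one should not expect a universal family over it.
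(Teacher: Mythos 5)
Your proposal is correct in outline and follows essentially the same route as the source the paper relies on: the paper itself gives no proof of this statement (it simply quotes \cite[Thm. VIII.22.4]{bpv}), but the surrounding discussion assembles exactly the ingredients you use --- proper discontinuity of $\Gamma(h)$ on $\Omega_{2k}^0$, Surjectivity of the Period Map, and the Local and Strong Torelli theorems --- so your argument is the standard one being invoked. The two points you flag yourself, namely upgrading the uniqueness clause of Strong Torelli to a family-level descent statement for the tautological families, and (implicitly, in your bijection and marking steps) the fact that every polarized K3 surface of degree $2k$ admits a marking carrying its polarization to the fixed class $h$, which rests on transitivity of $\mathrm{O}(\Lambda_{\mathrm{K}3})$ on primitive vectors of square $2k$, are precisely the technical content carried out in that reference and glossed in the paper's remark on the choice of $h$.
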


\begin{remark} At first glance this definition appears to depend upon the choice of the class $h \in \Lambda_{\mathrm{K}3}$. However, it can be shown that all choices of $h$ yield the same moduli space and that, in fact, it is possible to construct $\calF_{2k}$ without making reference to a specific choice of $h$.  However, this construction requires somewhat more theoretical background than the one presented above, so we will not detail it here: the interested reader may refer to \cite[Sect. 1.1]{cmsak3s} for details.
\end{remark}

$\calF_{2k}^0$ is a $19$-dimensional complex space with only finite quotient singularities. By exhibiting a projective compactification, Baily and Borel \cite{caqbsd} showed that it is even quasi-projective (their compactification will be discussed further in Sect. \ref{sect:compactification}). It may be obtained from $\calF_{2k} := \Gamma(h) \setminus \Omega_{2k}$ by removing finitely many divisors. 

$\calF_{2k}$ is also a  $19$-dimensional quasi-projective variety with only finite quotient singularities and, as $\Gamma(h)$ contains an isometry that interchanges the two connected components of $\Omega_{2k}$, it is even connected \cite[Thm. VIII.22.3]{bpv}. One can think of the points in the complement $\calF_{2k} - \calF_{2k}^0$ as corresponding to K3 surfaces that are pseudo-polarized but not polarized, but the definitions required to make this rigourous are somewhat subtle; we refer the interested reader to \cite{srmk3s}.

$\calF_{2k}$ may therefore be thought of as a coarse moduli space for pseudo-polarized K3 surfaces of degree $2k$. It can be realized as a quotient of a bounded symmetric domain of type IV (given by one of the connected components of $\Omega_{2k}$), by an arithmetically defined discrete group of automorphisms, a fact that makes it very amenable to explicit study: details may be found in \cite[Sect. 1]{cmsak3s}.

\subsection{Degenerations}

In the remainder of this section, we will discuss what happens when we proceed to the boundary of this moduli space. In order to do this we study \emph{degenerations}. Our main reference for this study will be \cite{bgd1}.

\begin{definition} A \emph{degeneration of K3 surfaces} is a proper, flat, surjective morphism $\pi\colon \calS \to \Delta$ from a smooth threefold $\calS$ to the unit disc $\Delta \subset \C$, whose general fibre $S_t = \pi^{-1}(t)$ for $t\neq 0$ is a smooth K3 surface. Note that we do not assume that $\calS$ is algebraic, but we will make the assumption that the components of the central fibre $S_0 = \pi^{-1}(0)$ are K\"{a}hler.\end{definition}

Let $\pi\colon \calS \to \Delta$ be any degeneration of K3 surfaces. We begin our analysis by converting this degeneration into a form that has certain desirable properties. The first step is to arrange for \emph{semistability}, using the Semistable Reduction Theorem of Knudsen, Mumford and Waterman:

\begin{theorem}[Semistable Reduction] \textup{\cite{tei}} \label{thm:ssr} Let $\pi \colon \calS \to \Delta$ be a degeneration of surfaces. Then there exists an $m$ such that, if $\pi'\colon \calS' \to \Delta$ is the base change by the map $\varrho\colon \Delta \to \Delta$ given by $\varrho(t) = t^m$, there is a birational morphism $\hat{\calS} \to \calS'$ so that $\psi\colon \hat{\calS} \to \Delta$ is semistable, i.e. $\hat{\calS}$ is nonsingular and $\hat{S}_0 :=\psi^{-1}(0)$  is a reduced divisor with normal crossings.
\[\xymatrix{
\hat{\calS} \ar[r] \ar[d]_{\psi}& \calS' \ar[r] \ar[d]_{\pi'} &  \calS \ar[d]_{\pi} \\
\Delta \ar@{=}[r] & \Delta \ar[r]^{\varrho} & \Delta   \\
}\]
\end{theorem}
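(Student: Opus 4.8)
The "final statement" in the excerpt is the Semistable Reduction Theorem (Theorem \ref{thm:ssr}), which is attributed to Knudsen–Mumford–Waterman and cited to the reference \cite{tei}. A genuine proof is the content of an entire book; what follows is a proof proposal in the sense of a roadmap through the standard argument.

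\medskip

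The plan is to follow the classical toroidal approach of Kempf–Knudsen–Mumford–Saint-Donat, reducing the statement to a combinatorial problem about subdividing cones so as to make them all unimodular (``smooth''), which can then be solved by iterated barycentric-type subdivision. First I would replace the ambient situation by a local one: by resolution of singularities (available since we are over $\C$ in characteristic zero) we may assume $\calS$ is smooth and that $S_0$ is a divisor with normal crossings, though not necessarily reduced, so locally $\pi$ has the form $(x_1,\dots,x_n) \mapsto t = x_1^{a_1}\cdots x_r^{a_r}$ for positive integers $a_i$. The obstruction to semistability is now entirely encoded in the multiplicities $a_i$: we need all of them equal to $1$ after a suitable base change and further blow-up.

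Next I would set up the toroidal dictionary. Each such local monomial map is a toric morphism associated to a map of lattices $N \to \Z$ carrying the standard cone $\sigma = \R_{\ge 0}^r$ (together with the trivial cone factor for the remaining coordinates) to $\R_{\ge 0}$, with the primitive generators $e_i$ of the rays mapping to $a_i$. These local pictures glue, along the normal-crossing strata, to a global combinatorial object — a ``polyhedral complex with integral structure'' fibered over $\R_{\ge 0}$ — and modifications of $\calS$ supported over $S_0$, together with base changes $t \mapsto t^m$, correspond exactly to (i) subdivisions of this complex and (ii) rescaling the target lattice $\Z \to \frac{1}{m}\Z$. The theorem then becomes: after rescaling by a suitable $m$ and subdividing, one can arrange that every maximal cone is generated by part of a lattice basis (unimodularity) \emph{and} that the images of its primitive ray generators in $\frac1m\Z$ all equal $\frac{1}{m}$; this last condition is precisely reducedness of $\hat S_0$ with normal crossings.

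The key steps, in order, are then: (1) choose $m$ to be (a multiple of) the least common multiple of all multiplicities $a_i$ occurring, so that after base change every ray generator maps into $\Z \subset \frac1m\Z$ — i.e. the heights become integers; (2) subdivide the complex by adding, on each cone, the lattice points lying on the ``slice'' at height one, i.e. refine so that every maximal cone has all its primitive generators at height exactly one — this is a standard ``pulling/star subdivision'' at interior lattice points and only requires the Hensel-type fact that such subdivisions are induced by blow-ups along smooth centers; (3) having achieved that all maximal cones lie with vertices on the height-one affine slice, apply the classical combinatorial resolution of toric singularities (iterated subdivision at the shortest lattice vector, decreasing the normalized volume/multiplicity of each cone) to make every maximal cone unimodular, checking that this last round of subdivision preserves the height-one property because it only inserts lattice points that are convex combinations of the existing height-one generators and hence themselves have height one. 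Throughout I would invoke the equivariance/patching lemma that these locally toric subdivisions glue to a global birational morphism $\hat\calS \to \calS'$ because the subdivisions agree on overlaps (they are canonical, depending only on the lattice data).

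The main obstacle is step (2)–(3): making the cones unimodular \emph{simultaneously} with keeping the central fiber reduced. Naive resolution of the toric singularity can reintroduce ray generators of height $>1$ (that is, components of $\hat S_0$ with multiplicity $>1$), undoing semistability. The fix — and the technical heart of KKMS — is to perform the resolution using only subdivisions centered at lattice points on the height-one slice, equivalently to work with the lattice polytope $\sigma \cap \{\text{height} = 1\}$ and triangulate \emph{it} into unimodular simplices with vertices among its lattice points; this is possible in all dimensions, though the combinatorics (and the bookkeeping that the triangulations of neighboring polytopes are compatible along shared faces) is delicate. A secondary, more bureaucratic obstacle is globalization: verifying that ``semistable'' as a local monomial condition really does assemble to ``$\hat\calS$ nonsingular and $\hat S_0$ reduced normal crossings'' on the total space, and that the birational morphism in the diagram is a genuine morphism over $\Delta$ and not merely a birational map — this is where one uses that all modifications are supported over the origin and are projective over $\calS'$.
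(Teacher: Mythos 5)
First, a point of comparison: the paper itself offers no proof of this theorem --- it is quoted from the reference \cite{tei} (Kempf--Knudsen--Mumford--Saint-Donat) and is immediately followed by an example of its use --- so your roadmap is being measured against the cited source rather than against anything in the text. Your overall route (Hironaka to reach a non-reduced normal crossings fibre, the toroidal/combinatorial dictionary, base change plus subdivision to reach unimodular cones with all primitive ray generators at height one) is indeed the strategy of that source, modulo some glossing (e.g.\ you suppress the normalization of the base-changed family, which is where the toroidal structure is actually established).

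There is, however, a genuine flaw in how you handle the combinatorial heart. Your step (3) justification --- that the resolution subdivisions ``only insert lattice points that are convex combinations of the existing height-one generators and hence themselves have height one'' --- is false: the lattice points used in shortest-vector star subdivisions are primitive vectors of the cone, not convex combinations of the generators, and they can (and in general must) have height $\geq 2$. Your proposed fix, that one can always triangulate the height-one lattice polytope $\sigma \cap \{\mathrm{height}=1\}$ into unimodular simplices with vertices among its lattice points ``in all dimensions,'' is also false: Reeve simplices are lattice tetrahedra containing no lattice points beyond their vertices yet having arbitrarily large normalized volume, so no such triangulation exists in dimension $\geq 3$. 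This is precisely why the KKMS argument does \emph{not} take $m$ to be merely the lcm of the multiplicities; the combinatorial main theorem there asserts only that some further dilation of the lattice polytopal complex (i.e.\ a further base change, interleaved with the subdivisions) admits a unimodular triangulation, and proving that is the hard part of the book. Your outline can be repaired for the theorem as actually stated, because for a degeneration of \emph{surfaces} the cones are at most $3$-dimensional, the height-one cross-sections are lattice polygons, and every lattice polygon does admit a unimodular triangulation on its lattice points; but as written your key lemma and its justification are incorrect, and the argument would not prove the general (higher-dimensional) statement you are implicitly invoking.
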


\begin{remark} Note that this theorem holds for degenerations of surfaces in general, not just for degenerations of K3 surfaces, although we will only use the K3 version here.\end{remark}

To illustrate the computation of semistable reduction in an example, we will simplify matters by considering a degeneration of elliptic curves. The basic theory is largely unchanged from the K3 surface case, but the equations are substantially simpler.

\begin{example}[The cuspidal elliptic curve] Consider the family $\pi\colon \calE \to \Delta := \{t \in \C \mid |t|<\varepsilon\}$ of elliptic curves in $\mathbb{A}^2 \times \Delta$ given by the equation
\[y^2 = x^3 + tf_3(x),\]
where $(x,y)$ are coordinates on $\mathbb{A}^2$ and $f_3(x)$ is a smooth cubic polynomial in $x$ with $f_3(0) \neq 0$.

$\calE$ is smooth, but the central fibre $E_0 = \pi^{-1}(0)$ of $\calE$ is a cuspidal elliptic curve, which does not have normal crossings. $\calE \to \Delta$ is thus not a semistable degeneration of elliptic curves.

To make it semistable, we use Thm. \ref{thm:ssr}. However, first we need to determine the order $m$ of the cover $\varrho$ that we need to take. To do this, first blow up $\calE$ until the central fibre has only normal crossings. Let $m_1,\ldots,m_n$ denote the multiplicities of the irreducible components of the new central fibre. Then $m = \mathrm{lcm}(m_1,\ldots,m_n)$.

In our case, to obtain a fibre with normal crossings we need to blow up the point $(x,y;t) = (0,0;0)$ three times. The strict transform of $E_0$ under this blow up has multiplicity $1$, and the three exceptional curves have multiplicities $2$, $3$ and $6$. We thus have $m = 6$.

Let $\calE'$ denote the pull-back of $\calE$ by the map $\varrho\colon \Delta \to \Delta$ given by $\varrho(t) = t^6$. Then $\calE'$ is given in $\mathbb{A}^2 \times \Delta$ by
\[y^2 = x^3 + t^6f_3(x).\]
$\calE'$ is singular at $(x,y;t) = (0,0;0)$. The singularity is locally analytically isomorphic to $\{y^2 = x^3 + t^6\} \subset \C^3$. This is an example of a \emph{minimally elliptic singularity}. Such singularities have been studied by Laufer \cite{omes}.

The resolution of this singularity is given in \cite[Table 5.1]{omes}. To resolve it, we blow up the point $(0,0;0) \in \calE'$ once. The resulting exceptional curve is an elliptic curve with self-intersection $(-1)$. The resolved family $\hat{\calE} \to \Delta$ is semistable, with central fibre consisting of a rational $(-1)$-curve meeting an elliptic $(-1)$-curve at a single node, both with multiplicity $1$.

In fact, in this case we can go one step further, by contracting the rational $(-1)$-curve in the central fibre of $\hat{\calE}$. This does not introduce any new singularities into $\hat{\calE}$, so the resulting family is semistable and all of its fibres are smooth elliptic curves.
\end{example}

Once our denegeration of K3 surfaces is semistable, we may additionally arrange for the canonical bundle of the total space to be trivial, using the following theorem of Kulikov, Persson and Pinkham:

\begin{theorem} \textup{\cite{kul1}\cite{kul2}\cite{dstcb}} If $\psi\colon \hat{\calS} \to \Delta$ is a semistable degeneration of K3 surfaces, and if all components of $\hat{S}_0 = \psi^{-1}(0)$ are K\"{a}hler, then there exists a birational modification $\hat{\calS}'$ of $\hat{\calS}$ such that $\psi'\colon \hat{\calS}' \to \Delta$ is semistable, isomorphic to $\hat{\calS}$ over $\Delta - \{0\}$, and has $\omega_{\hat{\calS}'} \cong \calO_{\hat{\calS}'}$. \end{theorem}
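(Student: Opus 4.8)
The goal is triviality of $\omega_{\hat{\calS}'}$, so the plan is to first pin down $\omega_{\hat{\calS}}$ as a divisor supported on the central fibre, and then to remove that divisor by a finite sequence of birational modifications that leave the family over $\Delta - \{0\}$ untouched and preserve semistability.

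\emph{Step 1 (normalising the canonical divisor).} Write $\hat{S}_0 = \sum_{i=1}^{n} V_i$; by semistability this is a reduced normal crossings divisor, and $\hat{S}_0 = \psi^{*}(0)$ is principal. Over $\Delta - \{0\}$ the fibres are K3 surfaces, so $\omega_{\hat{\calS}}$ restricts trivially to each fibre; since $h^{0}(\omega_{S_t}) = 1$ is constant and line bundles on the punctured disc are trivial, $\psi_{*}\omega_{\hat{\calS}}$ furnishes a nowhere-vanishing relative $2$-form over $\psi^{-1}(\Delta - \{0\})$, so $\omega_{\hat{\calS}}$ is trivial there. Hence $\omega_{\hat{\calS}} \cong \calO_{\hat{\calS}}\big(\sum_{i} a_{i} V_{i}\big)$ for integers $a_{i}$, and after subtracting the principal divisor $(\min_{i} a_{i})\hat{S}_{0}$ we may assume all $a_{i} \geq 0$ with $a_{i_{0}} = 0$ for some $i_{0}$. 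If every $a_{i} = 0$ we are done.

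\emph{Step 2 (reduction and induction).} Suppose $D := \{\, i : a_{i} > 0 \,\}$ is nonempty. Since $\hat{S}_{0}$ is connected there is a component $V$ with $a_{V} > 0$ meeting a component with $a = 0$. Restricting $\omega_{\hat{\calS}}$ to the components bordering this locus, using adjunction $\omega_{V_{i}} \cong \big(\omega_{\hat{\calS}} \otimes \calO(V_{i})\big)\big|_{V_{i}}$ to express the canonical class of each $V_i$ through its double curves, and feeding in the triple-point formula for the self-intersections of those double curves, one locates a smooth rational curve $E \subset \hat{S}_{0}$ along which an \emph{elementary modification} can be performed: blow up $E$ in $\hat{\calS}$ and contract the resulting ruled exceptional divisor along its other ruling. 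The outcome $\hat{\calS}'$ is again a smooth threefold, semistable over $\Delta$, isomorphic to $\hat{\calS}$ over $\Delta - \{0\}$, and the coefficients transform so that, after renormalising as in Step~1, an appropriate complexity of the fibre --- for instance $\sum_{i} a_{i}$ --- strictly decreases. As this is a nonnegative integer, the process terminates at a family with $\omega \cong \calO$.

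\emph{Step 3 (the main obstacle).} The heart of the matter is Step~2: showing that one can always find a rational curve $E$ of the correct type admitting an elementary modification that genuinely lowers the complexity, and that semistability --- reducedness of $\hat{S}_{0}$ and the normal crossings condition --- is preserved. This depends on the surface-theoretic structure of the fibre components (adjunction and the classification of surfaces strongly constrain each $V_{i}$ via its double-curve configuration) together with careful bookkeeping of how $\omega_{\hat{\calS}}|_{V_{i}}$ and the double-curve self-intersection numbers change under each blow-up and blow-down, with the triple-point formula supplying the numerical identity that keeps this bookkeeping consistent. This is also precisely where the Kähler hypothesis on the components of $\hat{S}_{0}$ enters: it allows one to invoke the classification of complex surfaces and guarantees that the contractions appearing in the elementary modifications exist in the analytic category even when $\hat{\calS}$ is not algebraic. (As a byproduct, the terminal families are exactly the Kulikov models of types I, II, and III, though that classification is not needed for the statement proved here.)
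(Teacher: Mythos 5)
There is a genuine gap, and it sits exactly where you placed it. Step 1 is fine: adjunction plus triviality of line bundles on the punctured disc does give $\omega_{\hat{\calS}} \cong \calO_{\hat{\calS}}(\sum_i a_i V_i)$, and twisting by multiples of the principal divisor $\hat{S}_0$ normalises the coefficients. But Step 2 is not an argument — it is a restatement of the theorem. The assertion that, whenever some $a_i > 0$, one can locate a curve $E$ admitting an elementary modification after which the total coefficient $\sum_i a_i$ strictly drops, while smoothness, reducedness and normal crossings of the central fibre are all preserved, is precisely the content of Kulikov's and Persson--Pinkham's papers, and it occupies dozens of pages of case analysis there. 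Nothing in your sketch shows that such a curve exists (the adjunction and triple-point formulas constrain the components but do not by themselves produce $E$), nor that the proposed modification is well defined (the exceptional divisor of the blow-up need not be contractible along its second ruling unless $E$ has the right normal bundle, e.g. of type $(-1,-1)$), nor that the complexity decreases rather than merely gets shuffled among components. Moreover the modifications actually needed are not only flops along rational curves: the known proofs also use modifications of a second type, which blow down an entire ruled component of $\hat{S}_0$ onto its image curve in a neighbouring component, and the induction is organised around the structure of the locus where $\omega$ has maximal coefficient, not around a single numerical invariant chosen in advance. Your Step 3 candidly identifies all of this as ``the main obstacle,'' which is an accurate self-assessment: what you have is a correct opening reduction plus a plan, not a proof.

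For calibration: the paper you are working from does not prove this statement either; it quotes it from Kulikov and Persson--Pinkham, precisely because the inductive construction of the modifications is long and delicate (and the K\"ahler hypothesis on the components is needed there both to control the geometry of the $V_i$ and to carry out the contractions in the analytic category, where the result can otherwise fail). So the honest conclusion is that your proposal reproduces the standard strategy in outline but leaves its essential step unestablished.
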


Motivated by this theorem, a \emph{Kulikov model} is defined to be a semistable degeneration of K3 surfaces $\pi\colon \calS \to \Delta$ with $\omega_{\calS} \cong \calO_{\calS}$; the discussion above shows that any degeneration of K3 surfaces may be converted into a Kulikov model by a base change and a birational modification. 

\begin{remark} \label{rem:nonalg} It is important to note that the construction of the Kulikov model is very non-algebraic in nature, so even if $\hat{\calS}$ is algebraic, its Kulikov model $\hat{\calS}'$ may not be. We do, however, know that the Kulikov model $\hat{\calS}'$ is complex analytic and that all components of its central fibre are K\"{a}hler.
\end{remark}

Kulikov models are useful because there exists a rough classification of their central fibres, first proven by Kulikov, Persson, Friedman and Morrison. However, in order to state it we first need to introduce the \emph{dual graph} of the central fibre of a degeneration.

\begin{definition} Let $S_0 = \bigcup V_i$ be the central fibre in a semistable degeneration. Define the dual graph $\Gamma$ of $S_0$ as follows: $\Gamma$ is a simplicial complex whose vertices $P_1, \ldots , P_r$ correspond to the components $V_1,\ldots,V_r$ of $S_0$; the $k$-simplex $\langle P_{i_0},\ldots,P_{i_k} \rangle$ belongs to $\Gamma$ if and only if $V_{i_0} \cap \cdots \cap V_{i_k} \neq \emptyset$. \end{definition}

This enables us to state:

\begin{theorem}[Classification of Kulikov Models] \textup{\cite{bgd1}\cite{kul1}\cite{odas}} \label{thm:kulclass} Let $\pi\colon \calS \to \Delta$ be a semistable degeneration of K3 surfaces with $\omega_{\calS} \cong \calO_{\calS}$, such that all components of $S_0 = \pi^{-1}(0)$ are K\"{a}hler. Then either
\begin{enumerate}[\textup{(}Type I\textup{)}]
\item $S_0$ is a smooth K3 surface;
\item $S_0$ is a chain of elliptic ruled components with rational surfaces at each end, and all double curves are smooth elliptic curves;
\item $S_0$ consists of rational surfaces meeting along rational curves which form cycles in each component. If $\Gamma$ is the dual graph of $S_0$, then $\Gamma$ is a triangulation of the $2$-sphere.
\end{enumerate} \end{theorem}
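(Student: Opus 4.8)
The plan is to reduce the statement to two inputs: a numerical constraint coming from the triviality of the canonical bundle together with semistability, and a careful analysis of the possible configurations compatible with that constraint. First I would exploit the fact that $\omega_{\calS} \cong \calO_{\calS}$ together with the semistable (reduced normal crossings) hypothesis to control each component $V_i$ of $S_0$. By adjunction on the total space, for each $V_i$ one has $\omega_{V_i} \cong \calO_{V_i}(-\sum_{j \neq i} V_j|_{V_i})$, i.e. $-K_{V_i}$ is effective and supported on the double curves $D_i := \sum_{j\neq i} (V_i \cap V_j)$. Thus each $V_i$ is either a surface with $K_{V_i}$ trivial (a K3 or, after checking $b_1$, an abelian/elliptic type which one rules out using the global structure) meeting nothing, or a surface with $-K_{V_i}$ effective — and on the latter the anticanonical divisor is exactly the curve along which $V_i$ meets the rest of $S_0$. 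This is the point where the trichotomy (Type I / II / III) is born: Type I when there is a single component with trivial canonical bundle; otherwise every component is rational or elliptic ruled with its anticanonical cycle prescribed.

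Next I would set up the combinatorial bookkeeping via the Clemens--Schmid exact sequence (or, equivalently, the weight spectral sequence of the nilpotent orbit) applied to the family. The limit mixed Hodge structure on $H^2$ of the nearby fibre is a mixed Hodge structure of weight $2$ with a monodromy operator $N$ whose nilpotency index $\nu$ ($N^{\nu} \neq 0$, $N^{\nu+1} = 0$) distinguishes the three types: $\nu = 0$ for Type I, $\nu = 1$ for Type II, $\nu = 2$ for Type III. Because the generic fibre is a K3 surface, $h^{2,0} = 1$ and $h^{1,0}=0$, which forces the weight filtration to be very rigid: $\mathrm{Gr}^W_4$ has rank at most $1$, and a rank-$1$ piece appears precisely in Type III. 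One then reads off from the weight spectral sequence that: in Type II the dual graph $\Gamma$ must be a chain (the $E_1^{-1,*}$ term, built from the double curves, contributes the weight-$1$ part and its shape is constrained by $h^{1,0}=0$ of the total limit), and each double curve is an elliptic curve (its genus is pinned by the contribution to $\mathrm{Gr}^W_3$); in Type III the term $E_1^{-2,4}$, which is the degree-$2$ homology of $\Gamma$, must be $1$-dimensional, so $\Gamma$ is a closed oriented surface, and combining with the fact that the components are rational (hence $\Gamma$ is built from triangles meeting in the pattern of the anticanonical cycles) forces $\Gamma$ to be a triangulation of $S^2$.

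Then I would close the loop on the geometry of the components in each type. For Type II: the end components carry an anticanonical curve that is a single smooth elliptic curve (a "half" of the chain), and the genus formula plus $K_{V_i}$ analysis shows they are rational; the interior components have anticanonical divisor equal to two disjoint elliptic curves, and a surface with $-K$ a disjoint union of two smooth elliptic curves, fibred compatibly with the degeneration, is elliptic ruled. For Type III: each $V_i$ is rational with $-K_{V_i}$ a cycle of rational curves (an "anticanonical cycle" in the sense of Looijenga), the double curves are all rational, and the triple points correspond exactly to the triangles of $\Gamma$; checking that these cycles glue into a $d$-semistable (Friedman) configuration whose dual complex is a genuine triangulation of the sphere is the final combinatorial verification. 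The main obstacle I expect is the Hodge-theoretic step: extracting from the limit mixed Hodge structure of a K3 that $\Gamma$ is a chain (Type II) or a triangulated sphere (Type III), which requires the full Clemens--Schmid machinery and a nontrivial identification of the low-weight graded pieces of $H^2_{\lim}$ with the (co)homology of $\Gamma$ and of the double-curve strata. The component-by-component adjunction analysis, by contrast, is essentially formal once one knows the nilpotency index.
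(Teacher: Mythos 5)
A preliminary remark: the paper does not prove this theorem at all — it is quoted with references to Kulikov, Persson--Pinkham and Friedman--Morrison — so your sketch can only be judged against those original arguments. In outline it does follow their strategy: adjunction on the semistable total space to identify the double locus $D_i$ on each component $V_i$ with an anticanonical divisor, Clemens--Schmid/limit mixed Hodge theory to constrain the combinatorics of the dual complex, and a component-by-component classification. So the route is the right one; the problem is that the steps you defer are exactly where the substance of the proof lies.

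Concretely, there are gaps. First, the conclusions you attribute to Hodge theory do not follow from it alone. Clemens--Schmid gives homological information about $\Gamma$ (namely $H^1(\Gamma)=0$ from $b_1=0$ of a K3, and $\mathrm{Gr}^W_0H^2_{\lim}\cong H^2(\Gamma)$ nonzero exactly when $N^2\neq 0$), but ``$H_1(\Gamma)=0$ and $H_2(\Gamma)\cong\mathbb{Z}$'' does not make $\Gamma$ a closed surface: one must first prove $\Gamma$ is a $2$-manifold, i.e. that on every component the double locus is a \emph{single connected} cycle of rational curves (so vertex links are circles) and every edge lies on exactly two triangles. Adjunction only gives that each connected component of $D_i$ has trivial dualizing sheaf, hence is a smooth elliptic curve or a cycle of rational curves; ruling out, say, a component carrying two disjoint cycles, or elliptic double curves coexisting with cycles, requires the triple-point formula and genuine surface-classification arguments. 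Likewise in Type II the absence of branching (interior components carrying exactly two disjoint elliptic anticanonical curves, end components exactly one, hence a chain) is not forced by $h^{1,0}=0$; it is the heart of the Persson--Pinkham/Kulikov analysis and is not ``essentially formal''. Second, you never use the hypothesis that the components of $S_0$ are K\"ahler, yet it is essential: a Kulikov model need not be algebraic (cf. the paper's remark on non-algebraicity), so both the classification of the possible components and the limit-MHS machinery must be justified in the K\"ahler setting — this is precisely the extra work in Friedman--Morrison. Finally, a small omission: in Type I you should rule out a smooth central fibre with trivial canonical bundle other than a K3 (e.g. abelian), say via $\chi_{\mathrm{top}}=24$ or invariance of $b_1$.
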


These cases can also be distinguished by the action of monodromy on the second cohomology $H^2(S_t,\Z)$ of a general fibre. Let $T$ denote the Picard-Lefschetz transformation on $H^2(S_t,\Z)$ obtained by the action of monodromy around $0$ and let $N = \log T$. Then $N$ is nilpotent and has $N = 0$ if $S_0$ is Type I, $N^2 = 0$ and $N\neq 0$ if $S_0$ is Type II, and $N^3 = 0$ and $N^2 \neq 0$ if $S_0$ is Type III.

We conclude this section by giving two examples of degenerations of K3 surfaces, one of Type II and one of Type III.

\begin{example}[Type II degeneration] \label{ex:type2} We begin with the Type II example. Consider the family $\pi\colon \calS \to \Delta := \{t \in \C \mid |t| < \varepsilon\}$ given in $\mathbb{W}\Proj(1,1,1,3) \times \Delta$ by the formula
\[y^2 = (f_3(x_1,x_2,x_3))^2 + tg_6(x_1,x_2,x_3),\]
where $(x_1,x_2,x_3,y)$ are coordinates on $\mathbb{W}\Proj(1,1,1,3)$ of weights $(1,1,1,3)$ respectively and $f_3$, $g_6$ are generic homogeneous polynomials in the $x_i$ of degrees $3$ and $6$ respectively.

The general fibre of $\calS$ is a sextic hypersurface in $\mathbb{W}\Proj(1,1,1,3)$ which, by Example \ref{ex:deg2}, is a generic polarized K3 surface of degree two. The central fibre $S_0 = \pi^{-1}(0)$ is isomorphic to two copies of $\Proj^2$ glued along an elliptic curve $\{f_3(x_1,x_2,x_3) = 0\}\subset \Proj^2$.

Note that $\omega_{\calS} \cong \calO_{\calS}$, but $\calS$ is \emph{not} a Kulikov model as $\calS$ is not smooth, so the family $\calS \to \Delta$ is not semistable. In fact, $\calS$ has eighteen singularities at the points $\{f_3(x_1,x_2,x_3) = g_6(x_1,x_2,x_3) = y = t = 0\}$. Each of these singularities is locally analytically isomorphic to $\{y^2 = x^2 + tz\} \subset \C^4$, which is a threefold node.

To solve this, one's first instinct would be to blow up each of the eighteen singularities individually. This introduces eighteen exceptional divisors $E_1,\ldots,E_{18}$, each of which is isomorphic to $\Proj^1 \times \Proj^1$. Indeed, the resulting family is semistable, but it is still not a Kulikov model: in this case, the canonical bundle is isomorphic to $\calO(E_1 + \cdots + E_{18})$, which is non-trivial.

Instead, we notice that $\calS$ contains two Weil divisors that are not Cartier, given by $\{y \pm f_3(x_1,x_2,x_3) = t = 0\}$. Choosing one of these divisors to blow up, we find that the resolved family $\calS' \to \Delta$ is semistable and the exceptional locus is eighteen copies of $\Proj^1$. As this resolution has not introduced any new divisors (it is an example of a \emph{small resolution}: a resolution with exceptional locus of codimension $\geq 2$), we must have $\omega_{\calS'} \cong \calO_{\calS'}$, so $\calS'$ is a Kulikov model. Its central fibre is a copy of $\Proj^2$ glued to a rational surface (obtained by blowing up $\Proj^2$ at eighteen points) along a smooth elliptic curve. This is an example of a Type II degeneration from Thm. \ref{thm:kulclass}.

At this point we make a crucial note: when we performed the resolution to go from $\calS$ to $\calS'$, we had a choice of which divisor to blow up. This illustrates an important point, which is that \emph{Kulikov models are not unique}. In fact, if we allow ourselves to perform analytic blow ups (and, as noted in Rmk. \ref{rem:nonalg}, in certain cases we have to, as an algebraic resolution with trivial canonical bundle will not always exist) then the situation gets much worse, as we have to make a choice of which divisor to blow up \emph{locally} in a neighbourhood of each node. In our example above this gives $2^{18}$ possible analytic Kulikov models!
\end{example}

\begin{example}[Type III degeneration] Next we look at a Type III example. Consider the family $\pi\colon \calS \to \Delta := \{t \in \C \mid |t| < \varepsilon\}$ given in $\Proj^3 \times \Delta$ by the formula
\[wxyz + tf_4(w,x,y,z) = 0,\]
where $(w,x,y,z)$ are coordinates on $\Proj^3$ and $f_4$ is a generic homogeneous polynomial of degree $4$ in $(w,x,y,z)$.

The general fibre of $\calS$ is a quartic hypersurface in $\Proj^3$ which, by Ex. \ref{ex:deg4}, is a generic polarized K3 surface of degree four. The central fibre $S_0 = \pi^{-1}(0)$ is isomorphic to four copies of $\Proj^2$ given by the coordinate hyperplanes in $\Proj^3$.

As in the previous example, we have $\omega_{\calS} \cong \calO_{\calS}$, but $\calS$ is singular and so not a Kulikov model. There are $24$ singularities, occurring at the intersections of $\{f_4(w,x,y,z) = 0\}$ with the six lines $\{w = x = 0\}$, $\{w = y = 0\}$, etc. As before, each of these singularities is locally analytically isomorphic to a threefold node $\{wx + tz = 0\} \subset \C^4$.

We may resolve these singularities in the same way as the previous example to get a Kulikov model. The central fibre consists of four rational surfaces meeting along six rational curves, with dual graph homeomorphic to a tetrahedron. This is an example of a Type III degeneration from Thm. \ref{thm:kulclass}.
\end{example}

\subsection{Compactifications} \label{sect:compactification}

Given that we have such a good description of the moduli space for pseudo-polarized K3 surfaces, it is natural to ask whether there is a nice way to compactify this moduli space, i.e. find a compact variety $\overline{\calF}_{2k}$ that contains $\calF_{2k}$ as an open subset. Preferably, one would like to do this in such a way that the boundary $\overline{\calF}_{2k} - \calF_{2k}$ encodes some geometric data about the corresponding degenerate K3 surfaces (in fact, ideally, we would like the boudary to provide moduli for degenerate K3's).

Probably the best known compactification of $\calF_{2k}$ is the \emph{Baily-Borel compactification}, first constructed in \cite{caqbsd}. This is a method to compactify any arithmetic quotient of a bounded symmetric domain, of which $\calF_{2k}$ is an example. In the case of $\calF_{2k}$, this compactification was studied in detail by Scattone \cite{cmsak3s}. Its boundary is a union of $0$- and $1$-dimensional strata, which have some geometric meaning: the $0$-dimensional strata correspond to degenerate K3's of Type III, and the $1$-dimensional strata to degenerate K3's of Type II. Furthermore, the $1$-dimensional strata are all rational curves, each of which is parametrised by the $j$-invariant of the elliptic double curves appearing in the corresponding Type II degenerate K3. 

\begin{example}[K3 surfaces of degree $2$] In the case of $\calF_2$, Friedman \cite[Sect. 5]{npgttk3s} showed that the boundary of the Baily-Borel compactification has four components of dimension $1$, which meet in a unique $0$-dimensional component. Furthermore, he gave a coarse geometric description of the degenerate fibres corresponding to each Type II boundary component; one of the four possibilities is given by the central fibre in Ex. \ref{ex:type2}.
\end{example}

\begin{example}[K3 surfaces of degree $4$] In the case of $\calF_4$, Scattone \cite[Sect. 6]{cmsak3s} has shown that the boundary of the Baily-Borel compactification has nine $1$-dimensional components, which meet in a unique $0$-dimensional component. This serves to illustrate that the number of boundary components in the Baily-Borel compactification of $\calF_{2k}$ grows very quickly with $k$; in fact, Scattone \cite{cmsak3s} has shown that it grows like $k^8$.
\end{example}

Unfortunately this is about all one can say about the Baily-Borel compactification: the boundary is simply too small to encode more detailed geometric data about degenerate K3's, let alone provide moduli for them. 

Several other compactifications also exist. \emph{Toroidal compactifications}, originally constructed by Mumford \cite{sclcv}\cite{tcss}, can be described as toroidal blow-ups of the boundary in the Baily-Borel compactification; a brief overview of this construction may be found in \cite[Sect. 2.2]{cmsak3s}. They have the advantage of being fairly easy to construct and their boundary components admit a clean explicit description. Furthermore, they can encode more detailed geometric data about degenerate K3's than is found in the Baily-Borel compactification (for instance, see \cite{pmbm}).  However, there is a large collection of such compactifications, corresponding to possible choices of blow-up, with no clear canonical choice amongst them.

One may also compactify using tools from the log minimal model program developed by Koll\'{a}r, Shepherd-Barron and Alexeev \cite{lcscmsp}\cite{msmgnws}\cite{hdasc}\cite{tadoss}. Such so-called \emph{KSBA compactifications} avoid many of the problems encountered by toroidal compactifications, but come with some of their own. They can be defined in a way that avoids choices, making them in some sense canonical, and their construction ensures that their boundary provides moduli for degenerate K3 surfaces. However, it is difficult to describe the boundaries of KSBA compactifications explicitly, making them hard to study.

Probably the best compactifications currently known are those that use techniques from Geometric Invariant Theory. Such \emph{GIT compactifications} were originally studied by Shah \cite{cmsk3sd2}\cite{dk3sd4} and, much more recently, Laza \cite{ksbacmsdtk3p}. These solve both problems: they admit clean explicit descriptions and their boundaries provide moduli for degenerate K3 surfaces. However, they are difficult to construct in general, so thus far have only been studied for small values of $k$: \cite{ksbacmsdtk3p} and \cite{cmsk3sd2} treat only the case $k = 1$, and \cite{dk3sd4} treats the case $k = 2$.

\section{Lattice Polarized K3 Surfaces}
\label{section:polarizationmoduli}

This general theory is all well and good, but so far we have relatively few explicit examples of K3 surfaces to work with (the polarized K3 surfaces of Exs. \ref{ex:deg2} and \ref{ex:deg4} are effectively the only ones we have constructed). Many more interesting families of K3 surfaces may be studied using the machinery of lattice polarizations. 

The concept of lattice polarization extends the idea of polarization discussed in Sect. \ref{sect:polarization}.  It was first introduced by Dolgachev \cite{mslpk3s}, whose results we will summarize here. We begin with a central definition.

\begin{definition}
\label{definition:lattice-polarization}
Let $S$ be an projective K3 surface and let $L$ be a non-degenerate lattice. Assume that there is a lattice embedding $\iota \colon L \hookrightarrow \NS(S)$. Then we say that the pair $(S,\iota)$ is a (\emph{pseudo-})\emph{ample $L$-polarized K3 surface} if 
\begin{enumerate}
\item The embedding $\iota$ is primitive, and
\item The image of $\iota$ contains an (pseudo-)ample class.
\end{enumerate}

Two (pseudo-)ample $L$-polarized K3 surfaces $(S,\iota)$ and $(S',\iota')$ are \emph{equivalent} if there is an isomorphism $f\colon S \rightarrow S'$ of K3 surfaces such that $f^* \circ \iota' = \iota$ and $f^*$ takes some (pseudo-)ample class on $S'$ to a (pseudo-)ample class on $S$.
\end{definition}

We will often informally use the phrase ``$L$-polarization'' to indicate a pseudo-ample $L$-polarization. The argument from Rmk. \ref{remark:projective} shows that every pseudo-ample $L$-polarized K3 surface is projective.

This definition is a natural generalization of the concept of polarization from Sect. \ref{sect:polarization}. Indeed, if $L$ is a non-degenerate lattice of rank $1$ and if $(S,\iota)$ is a pseudo-ample $L$-polarized K3 surface, then the image $\iota(L)$ contains a pseudo-ample class $h$ with $\langle h, h\rangle > 0$. Furthermore, since $\Lambda_{\mathrm{K}3}$ is even, we must have $\langle h, h\rangle = 2k > 0$, so $(S,h)$ is a pseudo-polarized K3 surface of degree $2k$. More examples of lattice polarized K3 surfaces will be given in Sects. \ref{section:Examples} and \ref{section:fibrations}. 

In general, since $\iota(L)$ contains a pseudo-ample class, $L$ must contain some $h$ with $\langle h,h \rangle > 0$. This, along with the condition that $\iota(L)$ must be contained in $\NS(S)$ and the Signature Theorem \cite[Thm. IV.2.14]{bpv}, shows that $L$ must be a lattice of signature $(1,n-1)$ where $n$ is the rank of $L$.

\subsection{Moduli of Lattice Polarized K3 Surfaces}

We next discuss the moduli space of K3 surfaces with lattice polarization. The construction works in largely the same way as the construction of the moduli space for polarized K3 surfaces discussed in Sect. \ref{sect:polarizedmoduli}.

Fix once and for all a nondegenerate lattice $L$ of rank $n$ and a primitive embedding of $L$ into $\Lambda_{\mathrm{K}3}$. We consider $L$ as a sublattice of $\Lambda_{\mathrm{K}3}$. Then a marked K3 surface $(S,\phi)$ is a \emph{marked \textup{(}pseudo-\textup{)}ample $L$-polarized K3 surface} if the restriction $\phi^{-1}|_L \colon L \rightarrow H^2(S,\mathbb{Z})$ is a (pseudo-)ample $L$-polarization on $S$. 

Note that for an generator $\sigma$ of $H^{2,0}(S)$, we have $\langle \phi_\mathbb{C}(\sigma), u \rangle = 0$ for any $u \in L$, since $\phi^{-1}(u)$ is contained in $\NS(S)$. Thus the period points of marked $L$-polarized K3 surfaces are contained in
\[\Omega_{L^\perp}  := \{[\sigma] \in \mathbb{P}(\Lambda_{\mathrm{K}3} \otimes \C) \mid \langle \sigma,\sigma\rangle = 0,\ \langle \sigma, \overline{\sigma} \rangle > 0,\ \langle \sigma, L \rangle = 0 \} \subset \Omega_{\mathrm{K}3}.\]
$\Omega_{L^\perp}$ called the \emph{period space of pseudo-ample $L$-polarized K3 surfaces}. It is a complex manifold of dimension $(20 - n)$ with two connected components, each of which is a bounded symmetric domain of type IV \cite[Sect. 3]{mslpk3s}.

\begin{remark} \label{remark:typeIV} By \cite[Sect. X.6.3]{dglgss}, a bounded symmetric domain of type IV (also called ``type BD I ($q = 2$)'' in \cite{dglgss}) and complex dimension $n$ is isomorphic to $\mathrm{SO}_0(2,n)/\mathrm{SO}(2) \times \mathrm{SO}(n)$. Furthermore, by the isomorphisms in  \cite[Sect. X.6.4]{dglgss},  in dimensions $1$, $2$ and $3$ these domains coincide with the classical modular domains $\mathbb{H}$, $\mathbb{H} \times \mathbb{H}$ and $\mathbb{H}_2$ respectively, where $\mathbb{H}$ denotes the upper half space in $\mathbb{C}$ and $\mathbb{H}_2$ denotes the Siegel upper half space of genus $2$.
\end{remark}

By the Surjectivity of the Period Map (Thm. \ref{surjectivity}), every point in $\Omega_{L^\perp}$ is the period point of some marked K3 surface $(S,\phi)$. Furthermore, for any such marked K3 surface we still have $\langle \sigma, L \rangle = 0$, so $\phi^{-1}(L)$ is contained in $\NS(S)$. In fact, outside of certain codimension $1$ loci, $\phi^{-1}(L)$ actually contains an ample class on $S$. Define, as before,
\[\Delta_{L^\perp}:= \{ \delta \in \Lambda_{\mathrm{K}3} \mid \langle \delta,\delta\rangle = -2,\ \langle \delta,L \rangle = 0\}.\]
To each $\delta$ in $\Delta_{L^\perp}$, we may assign a hyperplane $H_\delta$ as in Eq. \eqref{eq:H}. Then define
\[\Omega^0_{L^\perp} := \Omega_{L^\perp} - \bigcup_{\delta \in \Delta_{L^\perp}} (H_\delta \cap \Omega_{L^\perp}).\]

$\Omega^0_{L^\perp}$ called the \emph{period space of ample $L$-polarized K3 surfaces}. Dolgachev \cite{mslpk3s} proved that:

\begin{theorem}\textup{\cite[Thm. 3.1 and Cor. 3.2]{mslpk3s}} The points of the space $\Omega^0_{L^\perp}$ \textup{(}resp. $\Omega_{L^\perp}$\textup{)} are in bijection with ample \textup{(}resp. pseudo-ample\textup{)} marked $L$-polarized K3 surfaces.
\end{theorem}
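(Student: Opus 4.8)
The plan is to establish the claimed bijections by translating each of the three defining conditions of a (pseudo-)ample $L$-polarized K3 surface into a condition on the period point, and then invoking the Torelli theorems. First I would fix a point $[\sigma] \in \Omega_{L^\perp}$. By Surjectivity of the Period Map (Thm. \ref{surjectivity}), together with the fact that $\Gamma$ acts transitively on primitive embeddings of $L$ into $\Lambda_{\mathrm{K}3}$ in a suitable sense, there is a marked K3 surface $(S,\phi)$ whose period point is $[\sigma]$. Since $\langle \sigma, L\rangle = 0$ by definition of $\Omega_{L^\perp}$, every class in $\phi^{-1}(L)$ is orthogonal to a generator $\sigma$ of $H^{2,0}(S)$; hence, by the Lefschetz theorem on $(1,1)$-classes applied as in the discussion preceding Thm. \ref{weaktorelli}, $\phi^{-1}(L) \subseteq \NS(S)$. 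Because the embedding $L \hookrightarrow \Lambda_{\mathrm{K}3}$ was chosen primitive and $\phi$ is an isometry, $\phi^{-1}|_L \colon L \hookrightarrow \NS(S)$ is automatically a primitive embedding, so condition (1) of Def.~\ref{definition:lattice-polarization} holds for free. This gives one direction: every period point in $\Omega_{L^\perp}$ arises from a pseudo-ample (provided we can also arrange condition (2)) marked $L$-polarized K3 surface.

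Next I would handle condition (2), the existence of a (pseudo-)ample class in the image. The ample case: I claim $\phi^{-1}(L)$ contains an ample class precisely when $[\sigma] \notin H_\delta$ for every $\delta \in \Delta_{L^\perp}$, i.e. precisely when $[\sigma] \in \Omega^0_{L^\perp}$. This is the exact analogue of the argument given in the excerpt for ordinary polarizations: if $[\sigma]$ is orthogonal to some $\delta$ with $\langle\delta,\delta\rangle = -2$ and $\langle\delta, L\rangle = 0$, then $\phi^{-1}(\delta) \in \NS(S)$ is a $(-2)$-class orthogonal to all of $\phi^{-1}(L)$, and by \cite[Prop.~VIII.3.7]{bpv} (exactly as quoted in Sect.~\ref{sect:polarizedmoduli}) one produces an effective $(-2)$-curve $C$ with $\langle [C], \phi^{-1}(u)\rangle = 0$ for all $u \in L$, which obstructs ampleness of every class in $\phi^{-1}(L)$; conversely, if no such $\delta$ exists, then the Hodge Index Theorem together with the description of the ample cone as (a chamber of) the positive cone with the $(-2)$-walls removed shows that $\phi^{-1}(L) \otimes \R$ meets the ample cone of $S$, and since $\phi^{-1}(L)$ is primitive in $\NS(S)$ it contains an integral ample class. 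For the pseudo-ample case one drops the walls: every $[\sigma] \in \Omega_{L^\perp}$ yields a surface whose Picard lattice contains $\phi^{-1}(L)$, and since $L$ contains a class of positive square (as noted after Def.~\ref{definition:lattice-polarization}), the interior of the positive cone meets $\phi^{-1}(L)\otimes\R$; after possibly replacing a candidate class by its negative to land in the connected component of the positive cone containing the Kähler classes, one obtains a pseudo-ample class in $\phi^{-1}(L)$.

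Finally I would check that the correspondence $[\sigma] \mapsto (S,\phi)$ is a well-defined bijection, not merely a surjection onto isomorphism classes. Injectivity follows from the Strong Torelli theorem (Thm.~\ref{strongtorelli}), or rather its $L$-polarized refinement: if $(S,\phi)$ and $(S',\phi')$ are marked $L$-polarized K3 surfaces with the same period point, then $(\phi')^{-1}\circ\phi \colon H^2(S,\Z) \to H^2(S',\Z)$ is a Hodge isometry carrying $\phi^{-1}(L)$ to $(\phi')^{-1}(L)$ and respecting the (pseudo-)ample classes on each side, so by Strong Torelli it is induced by a unique isomorphism $f\colon S\to S'$ with $f^*\circ\phi' = \phi$; thus $(S,\phi)$ and $(S',\phi')$ represent the same marked $L$-polarized K3 surface. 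The main obstacle — and the step I would be most careful about — is the converse direction of condition (2) in the ample case: proving that the absence of $(-2)$-classes in $L^\perp \cap \NS(S)$ really forces an \emph{integral} ample class to lie in $\phi^{-1}(L)$ itself, rather than merely forcing $\phi^{-1}(L)\otimes\R$ to touch the closure of the ample cone. This requires knowing that the ample cone of $S$ is open and is cut out inside the positive cone exactly by the $(-2)$-walls, and then using primitivity of the embedding $\phi^{-1}|_L$ to descend from a real ample class in the subspace to an integral one; the needed facts about the structure of the ample cone are precisely those developed (for K3 surfaces in general) later in these notes, and the argument is due to Dolgachev \cite[Sect.~3]{mslpk3s}, whose proof I would follow.
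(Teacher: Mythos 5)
Your outline follows the same route the paper points to (Surjectivity of the Period Map plus the Torelli theorems; the paper itself does not write the argument out but defers to Dolgachev, remarking that the real content is ``keeping track of K\"ahler data''), and it is precisely that bookkeeping step where your argument breaks. You claim that the marked surface $(S,\phi)$ handed to you by Thm.~\ref{surjectivity} already satisfies condition (2) of Def.~\ref{definition:lattice-polarization}: in the pseudo-ample case you assert that a class of positive square lying in the correct component of the positive cone is pseudo-ample, and in the ample case you assert that the absence of $(-2)$-classes orthogonal to $\phi^{-1}(L)$ forces $\phi^{-1}(L)\otimes\R$ to meet $\Amp(S)$. Both assertions are false for the given marking. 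Concretely, let $S$ have $\NS(S)\cong H$ with basis $[E],[O]$ as in Sect.~\ref{sect:amplecone}, take $L=\langle 4\rangle$ and a marking with $\phi^{-1}(h)=3[E]+2[O]$. The only $(-2)$-classes in $\NS(S)$ are $\pm[O]$, and $\langle 3[E]+2[O],[O]\rangle=-1\neq 0$, so the period point lies in $\Omega^0_{L^\perp}$; yet that same inequality shows no element of the ray $\R_{>0}(3[E]+2[O])$ is even nef, so $\phi^{-1}(L)\otimes\R$ meets the positive cone only in a chamber other than $\Amp(S)$. The structure of the ample cone as the positive cone minus the $(-2)$-walls does not rescue this: the subspace can sit entirely in the wrong chambers. (The integrality worry you flag at the end is, by contrast, the easy part: once the subspace meets the open cone $\Amp(S)$, rational points are dense.)

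The missing idea is that you must be allowed to change the marking. Picard--Lefschetz reflections in $(-2)$-classes of $\NS(S)$ act trivially on $\T(S)$ and hence fix the period point, and the closure of $\Amp(S)$ is a fundamental domain for $W_S$ on $\calC_S$; so after composing $\phi$ with a suitable $w\in W_S$ (and possibly $-\Id$ to land in the right component) one obtains a new marking, with the same period point, for which the image of $L$ does contain a pseudo-ample, respectively ample, class --- in the example above, reflecting in $[O]$ replaces $3[E]+2[O]$ by the ample class $3[E]+[O]$. For the ample case one also needs that some point of $\phi^{-1}(L)\otimes\R\cap\calC_S$ avoids every $(-2)$-wall, which uses local finiteness of the walls together with the hypothesis (coming from $[\sigma]\in\Omega^0_{L^\perp}$) that no wall contains the whole subspace. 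The same gap recurs in your injectivity step: you assert that the Hodge isometry $(\phi')^{-1}\circ\phi$ ``respects the (pseudo-)ample classes on each side,'' but Thm.~\ref{strongtorelli} needs a class $u\in L$ with $\phi^{-1}(u)$ and $(\phi')^{-1}(u)$ simultaneously ample, and this is exactly the K\"ahler-cone bookkeeping that has to be argued, not assumed. (The appeal to transitivity of $\Gamma$ on primitive embeddings in your first paragraph is unnecessary, since $L$ is fixed once and for all as a sublattice of $\Lambda_{\mathrm{K}3}$.)
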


Using  the Torelli Theorems (Thms. \ref{localtorelli} and \ref{strongtorelli}) and the Surjectivity of the Period Map (Thm. \ref{surjectivity}), this theorem is not very difficult to prove; the proof essentially amounts to notation and keeping track of K\"ahler data. We refer the interested reader to \cite[Sect. 2]{mslpk3s} for details.

To construct a coarse moduli space for (pseudo-)ample $L$-polarized K3 surfaces, we once again have to perform a quotient to get rid of the choice of marking. Let $\Gamma_{L^\perp}$ be the subgroup of elements of $\mathrm{O}(\Lambda_{\mathrm{K}3})$ satisfying
\[\Gamma_{L^\perp} := \{ \gamma \in \mathrm{O}(\Lambda_{\mathrm{K}3}) \mid \gamma |_L = \Id\},\]
i.e. the subgroup of elements of $\mathrm{O}(\Lambda_{\mathrm{K}3})$ which fix $L \subseteq \Lambda_{\mathrm{K}3}$ pointwise. Then we find:

\begin{theorem}\textup{\cite[Rmk. 3.4]{mslpk3s}} The quotients
\[\mathcal{F}_{L^\perp}^0 := \Gamma_{L^\perp} \setminus \Omega_{L^\perp}^0 \]
and
\[\mathcal{F}_{L^\perp}:=\Gamma_{L^\perp} \setminus \Omega_{L^\perp}\]
are coarse moduli spaces for ample and pseudo-ample $L$-polarized K3 surfaces respectively.
\end{theorem}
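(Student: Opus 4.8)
The plan is to follow the template used for $\calF_{2k}^0$ in Sect.~\ref{sect:polarizedmoduli}, reducing everything to the period-space description in the theorem of Dolgachev recalled above together with the Strong Torelli theorem (Thm.~\ref{strongtorelli}). I will carry out the argument for the ample space $\calF_{L^\perp}^0$; the pseudo-ample case $\calF_{L^\perp}$ is entirely parallel, one simply omitting the hyperplanes $H_\delta$ and invoking the pseudo-polarized form of Strong Torelli. The first point is that $\Gamma_{L^\perp}$ acts properly discontinuously on $\Omega_{L^\perp}^0$ (and on $\Omega_{L^\perp}$). Since $L$ has signature $(1,n-1)$, the orthogonal complement $L^\perp\subset\Lambda_{\mathrm{K}3}$ has signature $(2,20-n)$, so $\Omega_{L^\perp}$ is the disjoint union of two bounded symmetric domains of type~IV, on which the point stabilizers in $\mathrm{O}(2,20-n)$ are compact; the restriction map $\Gamma_{L^\perp}\to\mathrm{O}(L^\perp)$ has arithmetic image, which therefore acts properly discontinuously with finite stabilizers --- exactly the feature that the full group $\mathrm{O}(\Lambda_{\mathrm{K}3})$ lacked on $\Omega_{\mathrm{K}3}$. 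Moreover $\Delta_{L^\perp}$ is $\Gamma_{L^\perp}$-invariant and the union $\bigcup_\delta(H_\delta\cap\Omega_{L^\perp})$ is locally finite (each period point is orthogonal to only finitely many nearby classes of square $-2$ in $L^\perp$, by the Hodge index theorem), hence a closed analytic $\Gamma_{L^\perp}$-invariant subset; so the action restricts to a properly discontinuous one on the open submanifold $\Omega_{L^\perp}^0$. It follows that $\calF_{L^\perp}^0$ and $\calF_{L^\perp}$ are Hausdorff normal complex analytic spaces with at worst finite quotient singularities.

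Next I would establish the bijection on points. By Dolgachev's theorem the points of $\Omega_{L^\perp}^0$ are precisely the marked ample $L$-polarized K3 surfaces $(S,\phi)$, and two of them, $(S,\phi)$ and $(S',\phi')$, have the same image in $\calF_{L^\perp}^0$ exactly when some $\gamma\in\Gamma_{L^\perp}$ sends the period point of $(S',\phi')$ to that of $(S,\phi)$. In that case $\varphi:=\phi^{-1}\circ\gamma\circ\phi'\colon H^2(S',\Z)\to H^2(S,\Z)$ is a Hodge isometry, and since $\gamma|_L=\Id$ it satisfies $\varphi\circ\iota'=\iota$; choosing an ample class $h\in\iota(L)$ and setting $h':=\varphi^{-1}(h)$, Strong Torelli (Thm.~\ref{strongtorelli}) produces a unique isomorphism $f\colon S\to S'$ with $f^*=\varphi$, which realizes an equivalence of ample $L$-polarized K3 surfaces. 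The converse is immediate from functoriality of $f^*$. Hence the points of $\calF_{L^\perp}^0$ are in natural bijection with equivalence classes of ample $L$-polarized K3 surfaces.

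It then remains to verify the coarse moduli property. Given a family of ample $L$-polarized K3 surfaces $\pi\colon\calX\to B$ --- by which I mean a proper smooth morphism with K3 fibres together with an embedding of the \emph{constant} local system $\underline{L}$ into $R^2\pi_*\Z$ restricting to an ample $L$-polarization on each fibre --- I would cover $B$ by simply connected analytic opens $U_i$, choose markings $\phi_i$ of $R^2\pi_*\Z|_{U_i}$ restricting to the fixed inclusion $L\hookrightarrow\Lambda_{\mathrm{K}3}$ on $\underline{L}|_{U_i}$, and form the holomorphic period maps $U_i\to\Omega_{L^\perp}$, which land in $\Omega_{L^\perp}^0$ by the previous paragraph applied fibrewise. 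On overlaps $\phi_i\circ\phi_j^{-1}$ is a locally constant isometry of $\Lambda_{\mathrm{K}3}$ fixing $L$ pointwise --- precisely because both markings are the identity on the \emph{constant} subsheaf $\underline{L}$ --- hence lies in $\Gamma_{L^\perp}$, so the composites $U_i\to\Omega_{L^\perp}^0\to\calF_{L^\perp}^0$ agree on overlaps and glue to a holomorphic map $B\to\calF_{L^\perp}^0$ depending only on the isomorphism class of the family. By the bijection just established this is the expected map when $B$ is a point.

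The genuinely substantive step, and the one I expect to be the main obstacle, is to check that $\calF_{L^\perp}^0$ is \emph{initial} among complex spaces equipped with such an assignment of maps. For this I would construct, in a neighbourhood of an arbitrary point of $\Omega_{L^\perp}^0$, a local universal family: take the versal deformation of the corresponding K3 surface and restrict to the sublocus over which all of $\iota(L)$ remains algebraic and ample --- by Local Torelli (Thm.~\ref{localtorelli}), Surjectivity of the Period Map (Thm.~\ref{surjectivity}), and the openness of ampleness, this sublocus maps isomorphically onto an open subset of $\Omega_{L^\perp}^0$, and its classifying map to $\calF_{L^\perp}^0$ is the (open) quotient map. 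Any complex space $\mathcal{N}$ carrying a compatible assignment then receives from each such family a holomorphic map out of an open subset of $\calF_{L^\perp}^0$; these agree on overlaps by the point-bijection of the second paragraph and glue to the required factorization $\calF_{L^\perp}^0\to\mathcal{N}$, unique because these local universal families cover $\calF_{L^\perp}^0$. The proper discontinuity of the first paragraph is the other delicate ingredient, but it is routine once the arithmetic structure of $\Gamma_{L^\perp}$ acting on the type~IV domain $\Omega_{L^\perp}$ is recognized.
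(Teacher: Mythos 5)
The paper itself gives no proof of this theorem --- it is quoted from Dolgachev \cite{mslpk3s}, and the surrounding text only indicates the strategy (Torelli theorems plus surjectivity of the period map, the proof ``essentially amounting to notation and keeping track of K\"ahler data''). Your outline follows that indicated strategy, and most of its skeleton is sound: proper discontinuity of $\Gamma_{L^\perp}$ on the type IV domain, local finiteness of the hyperplane arrangement, gluing of local period maps of families, and local universal families obtained from versal deformations are all standard. The genuine gap is at the one step that carries the real content, namely your application of Strong Torelli (Thm.~\ref{strongtorelli}) in the point-bijection argument. You choose $h\in\iota(L)$ ample on $S$ and set $h':=\varphi^{-1}(h)$, but you never verify that $h'$ is ample on $S'$, and it need not be: condition (2) of Def.~\ref{definition:lattice-polarization} only guarantees that $\iota'(L)$ contains \emph{some} ample class, and the Hodge isometry $\varphi$ has no reason to match the ample chambers inside $\iota(L)\otimes\R$ and $\iota'(L)\otimes\R$. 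Nor can you repair $\varphi$ by composing with Picard--Lefschetz reflections acting trivially on $L$: for periods in $\Omega^0_{L^\perp}$ there are no algebraic $(-2)$-classes orthogonal to $\iota(L)$, precisely because the hyperplanes $H_\delta$, $\delta\in\Delta_{L^\perp}$, have been removed.

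That this is not a removable technicality is shown already by the involution $\iota\mapsto-\iota$: for any ample $L$-polarized $(S,\iota)$, the pair $(S,-\iota)$ is again ample $L$-polarized in the sense of Def.~\ref{definition:lattice-polarization} (same image, hence containing the same ample class), and if $\phi$ is a marking compatible with $\iota$ then $-\phi$ is a marking compatible with $-\iota$ having literally the same period point in $\Omega^0_{L^\perp}$; yet $(S,\iota)$ and $(S,-\iota)$ are never equivalent, since $f^*\circ(-\iota)=\iota$ would force $f^*$ to carry an ample class to an anti-ample one. So ``same $\Gamma_{L^\perp}$-orbit of period points'' does not by itself yield equivalence, and your second paragraph cannot be completed as written; the same issue then propagates into the coarse moduli property. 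This is exactly the ``K\"ahler data'' bookkeeping the paper alludes to: in Dolgachev's treatment the ambiguity is resolved by rigidifying the notion of (marked) ample $L$-polarization with a choice of cone data in $L\otimes\R$ (a component $V(L)^+$ and a chamber $C(L)^+$ cut out by the roots of $L$, which the polarization is required to send towards the ample cone), and the bijection is proved for that refined notion. To complete your argument you must either import this refinement into the definitions, or prove that the subcone $\iota^{-1}(\Amp(S))\subset L\otimes\R$ is determined by the period point --- which, as the $\pm\iota$ example shows, fails under the definition as stated.
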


As before, $\mathcal{F}_{L^\perp}$ may be seen as an arithmetic quotient of a bounded symmetric domain of type IV (given by one of the connected components of $\Omega_{L^{\perp}}$), so $\mathcal{F}_{L^\perp}$ is connected and one may use the work of Baily-Borel \cite{caqbsd} to show that both $\mathcal{F}^0_{L^\perp}$ and $\mathcal{F}_{L^\perp}$ are quasi-projective varieties. Note that $\mathcal{F}_{L^\perp}$ has dimension $(20 - n)$.

\begin{example}[Lattices of rank 19]
If $L$ has rank $19$ then we see that $\Omega_{L^\perp}$ is a $1$-dimensional space so, by Rmk. \ref{remark:typeIV}, each connected component of $\Omega_{L^\perp}$ is analytically isomorphic to  $\mathbb{H}$, the upper half plane in $\mathbb{C}$. We thus see that $\calF_{L^\perp}$ is isomorphic to the quotient of $\mathbb{H}$ by the action of the group $\Gamma_{L^\perp}$. It can be shown that this group acts on $\mathbb{H}$ as a discrete subgroup of $\mathrm{SL}_2(\mathbb{R})$.

For instance, if $L \cong H \oplus (-E_8) \oplus (-E_8) \oplus \left<-2n\right>$, then \cite[Thm. 7.1]{mslpk3s} shows that the group $\Gamma_{L^\perp}$ acts on $\mathbb{H}$ in the same way as $\Gamma_0(n)^+$. Thus 
\[\mathcal{F}_{L^\perp}  = \Gamma_0(n)^+ \setminus \mathbb{H},\]
which is a classical modular curve.

A more complicated example is provided by the lattice $L \cong H \oplus (-E_8) \oplus (-D_7) \oplus (-A_2)$. In this case, Elkies \cite[Sect. 3]{scck3snsr19} has proved that $\mathcal{F}_{L^\perp}$ is isomorphic to a quotient of the \emph{Shimura curve} $X(6)$. We refer the interested reader to \cite{scck3snsr19} for full details of this and several other Shimura curve examples.
\end{example}

\begin{example}[Lattices of rank 18]
If $L$ is a lattice of rank 18, then we find a similar structure. In this case, by Rmk. \ref{remark:typeIV}, each connected component of $\Omega_{L^\perp}$ is analytically isomorphic to the product $\mathbb{H} \times \mathbb{H}$, and $\calF_{L^\perp}$ is isomorphic to the quotient of $\mathbb{H} \times \mathbb{H}$ by the action of $\Gamma_{L^\perp}$. This group acts on $\mathbb{H} \times \mathbb{H}$ as a subgroup of $\left((\mathrm{SL}_2(\mathbb{R} ) \times \mathrm{SL}_2(\mathbb{R})\right) \rtimes \Z / 2\Z$, where the $\Z / 2\Z$ acts to exchange the factors of $\mathbb{H} \times \mathbb{H}$.

In particular, if 
\[L \cong \left( \begin{matrix} 2 & D \\ D & (D^2 - D)/2 \end{matrix} \right) \oplus (-E_8) \oplus (-E_8)\]
for $D$ square-free, then $\mathcal{F}_{L^\perp}$ is birational to the classically known \emph{Humbert surface of discriminant $D$} by \cite[Thm. 9]{k3sehms}. 

If $L \cong H \oplus (-E_8) \oplus (-E_8)$, then $\Gamma_{L^\perp}$ is isomorphic to $\left(\mathrm{SL}_2(\mathbb{Z}) \times \mathrm{SL}_2(\mathbb{Z})\right) \rtimes \Z / 2\Z$. In this case $\mathcal{F}_{L^\perp}$ is isomorphic to the symmetric product of two copies of the classical modular curve; see \cite{milpk3s} or \cite{nfk3smmp} for more details.
\end{example}

One may notice that the moduli spaces of $L$-polarized K3 surfaces with $\rank L \geq 17$ are often related to the moduli spaces of abelian surfaces. This is no accident. According to \cite{k3slpn}, K3 surfaces with lattice polarization by $L = (-E_8) \oplus (-E_8) \oplus \langle 2n \rangle$ can be constructed geometrically from abelian surfaces with $(1,n)$ polarization. More generally, Galluzzi and Lombardo \cite{cbk3s} show that a large class of rank $17$ polarized K3 surfaces admit algebraic correspondences with abelian surfaces. This is part of a more general relation between periods of K3 surfaces and abelian varieties called the Kuga-Satake construction, which was first described in \cite{avapk3s}. A modern introduction to this construction may be found in \cite{ksvhc}.

\subsection{Examples of Lattice Polarized K3 Surfaces} \label{section:Examples}

In this subsection we will provide some simple examples of lattice polarized K3 surfaces. The aim is to show that the lattice polarization construction given above arises naturally and geometrically.

\subsubsection{Examples Arising from Embeddings} \label{section:embeddings}

If $S$ is a K3 surface embedded as a subvariety of a smooth projective variety $X$, then there is a natural restriction map $r\colon \NS(X)\otimes \mathbb{Q} \rightarrow \NS(S)\otimes \mathbb{Q}$. Let $L$ be the lattice 
\[L:= \Image r \cap \NS(S) \subseteq \NS(S) \otimes \mathbb{Q}.\] 
We claim that $S$ is $L$-polarized.

The image of this restriction morphism is non-trivial, first of all, since any ample class on $X$ restricts to an ample class on $S$; this also shows that condition 2 of Def. \ref{definition:lattice-polarization} is satisfied. By construction, the lattice $L$ is a primitive sublattice of $\NS(S)$, so condition 1 of Def. \ref{definition:lattice-polarization} is satisfied. Therefore $S$ is $L$-polarized, as claimed.

As an interesting example of this, let $X$ be a smooth Fano threefold (i.e., a threefold whose anticanonical bundle is ample) and let $S$ be a smooth member of the anticanonical linear system $|-K_X|$; by adjunction $S$ is a K3 surface. The Lefschetz Hyperplane Theorem shows that the map $r\colon \NS(X) \rightarrow \NS(S)$ is a a primitive embedding. Furthermore, by \cite{ftk3s}, the map $r$ can be seen as a lattice embedding if we equip $\NS(X)$ with the bilinear form
\begin{equation}
\label{equation:intersection}
\langle u,v \rangle = \langle [-K_X],u,v \rangle_X,
\end{equation}
where $\left<\cdot,\cdot,\cdot\right>_X$ is the trilinear intersection form on $\NS(X)$.

\begin{example}[Anticanonical hypersurfaces in $\mathbb{P}^1 \times \mathbb{P}^2$]
\label{example:P1P2}
For instance, if we take $X = \mathbb{P}^1 \times \mathbb{P}^2$, then $\NS(X) \cong \mathbb{Z}^2$ is spanned by the divisor classes
\[D_1 := [p  \times \mathbb{P}^2], \quad D_2 := [\mathbb{P}^1 \times H]\]
where $p$ is a point in $\mathbb{P}^1$ and $H$ is a line in $\mathbb{P}^2$. One may check that $-K_X \sim i_1^* (-K_{\mathbb{P}^1}) + i_2^*(-K_{\mathbb{P}^2})$ where $i_1$ is the projection map onto $\mathbb{P}^1$ and $i_2$ is the projection map onto $\mathbb{P}^1$. Thus $[-K_X] = 2D_1 + 3D_2$. 

Now it is easy to compute that 
\[\langle D_1, D_1,  D \rangle_X = 0\]
for any divisor $D$, and that
\[\langle D_1, D_2, D_2\rangle_X = 1,\quad \langle D_2,D_2,D_2 \rangle_X = 0.\]
Using Eq. \eqref{equation:intersection} we deduce that any $S$ embedded as an anticanonical hypersurface in $X$ is lattice polarized by a lattice of rank $2$ with Gram matrix
\[\left( \begin{matrix} 0 & 3 \\ 3 & 2  \end{matrix} \right).\]
\end{example}

If $X$ is a smooth toric weak Fano threefold  (i.e. a threefold whose anticanonical bundle is pseudo-ample) then one may effectively compute the induced lattice polarization induced on the anticanonical K3 surface $S$ using toric geometry. A large number of K3 surfaces of this form were found by Reid \cite{c3f}, as toric resolutions of hypersurfaces in weighted projective spaces (the full list is given in \cite[Sect. 13.3]{wwwci}); the induced lattice polarizations on them were computed by Belcastro \cite{plfk3s}. Further examples of this type have been computed by Rohsiepe \cite{lptk3s}\cite{cyhtvfd}.

\subsubsection{Examples Arising from Singularities}

Another useful way of producing lattice polarizations on K3 surfaces is by introducing controlled singularities. In particular, if $\overline{S}$ is a compact algebraic surface with singularities of ADE type (see, for instance \cite[Sect. 4.2]{chapters}) and trivial dualizing sheaf $\omega_{\overline{S}} \cong \calO_{\overline{S}}$, then the minimal resolution $S$ of $\overline{S}$ is a K3 surface and the exceptional divisor associated to each singularity is a configuration of rational curves whose dual intersection graph is a Dynkin diagram of ADE type. The associated (negative definite) root system is then contained inside of the N\'eron-Severi lattice of $S$. 

\begin{example}[Nodal quartics]
\label{example:nodalquartic}
Let $\overline{S}$ be a quartic hypersurface in $\mathbb{P}^3$ with a single $A_1$ singularity and minimal resolution $S \to \overline{S}$. The surface $S$ is a pseudo-polarized K3 surface of degree $4$ but is \emph{not} polarized, since the embedding into $\mathbb{P}^3$ induced by the hyperplane section is not a smooth embedding and so the class in $\NS(S)$ coming from the restriction of the hyperplane class in $\mathbb{P}^3$ is only pseudo-ample.

After resolving the singularity, we obtain an exceptional rational curve $C$ in $S$. The class of $C$ has self-intersection $(-2)$ by the Riemann-Roch Theorem for surfaces. We also see that $\langle [H|_S], [C] \rangle = 0$ for a generic hyperplane section $H$ of $\mathbb{P}^3$. Therefore there is an embedding of the rank two lattice $L$ with Gram matrix
\[\left( \begin{matrix} -2 & 0 \\ 0 & 4 \end{matrix} \right)\]
into $\NS(S)$. 

As we have already observed, the class $[H|_S]$ is pseudo-ample, thus condition 2 of Def. \ref{definition:lattice-polarization} holds. Furthermore, if $L$ were not embedded primitively into $\NS(S)$, then there would be some even integral sublattice of $L\otimes \mathbb{Q}$ containing $L$. One may argue using Thm. \ref{thm:overlattices} from the appendix that no such sublattice exists. Thus $L$ is a primitive sublattice of $\NS(S)$, so condition 1 of Def. \ref{definition:lattice-polarization} is also satisfied. Therefore, we see that $S$ is $L$-polarized.

We should note that, in terms of the moduli space of pseudo-polarized K3 surfaces of degree $4$ constructed in Sect. \ref{sect:polarizedmoduli}, such K3 surfaces lie in one of the hyperplanes $H_\delta$.
\end{example}

In general, one may try to produce lattice polarizations by imposing specific configurations of singularites. However, some care is required. Suppose that $\overline{S}$ is a singular surface with trivial dualizing sheaf  $\omega_{\overline{S}} \cong \calO_{\overline{S}}$, a primitive pseudo-ample class $h \in \NS(\overline{S})$ with $\langle h,h \rangle = 2k$, and ADE singularities of types $R_1, \ldots, R_n$. Then the K3 surface $S$ obtained by resolving the singularities of $\overline{S}$ will not necessarily be lattice polarized by $\langle 2k \rangle \oplus (-R_1) \oplus \dots \oplus (-R_n)$, since this lattice may not be primitively embedded in $\NS(S)$. This phenomenon is illustrated in the following example.

\begin{example}[Kummer surfaces] Suppose that $A$ is an abelian surface. The involution $\iota\colon A \to A$ given by $\iota(x) = -x$ (defined using the group law on $A$) has sixteen fixed points. If we quotient $A$ by this involution, we obtain a projective surface $\overline{S}$ with trivial dualizing sheaf and sixteen singularities of type $A_1$. Each of these singularities may be resolved by blowing up once, giving sixteen disjoint exceptional $(-2)$-curves $E_1,\ldots,E_{16}$. The resolution is a projective K3 surface $S$, called the \emph{Kummer surface associated to $A$}.

Now let $h \in \NS(\overline{S})$ be a primitive ample class and suppose that $\langle h,h \rangle = 2k$. Then from the discussion above, there is an embedding of the lattice $L := \langle 2k \rangle \oplus (-A_1)^{\oplus 16}$ into $\NS(S)$. However, this embedding is not primitive, as we shall now demonstrate.

Begin by noting that there exists a double cover of $S$ branched along the divisor $\sum_{i=1}^{16} E_i$: this cover is precisely the (non-minimal) surface obtained by blowing up the sixteen fixed points of $\iota$ in $A$. However, the existence of this cover is equivalent to the existence of a divisor $D \in \Pic(S)$ with $2D \cong \sum_{i=1}^{16} E_i$. The class $[D]$ of $D$ in $\NS(S)$ is thus equal to $\frac{1}{2}\sum_{i=1}^{16} [E_i]$, which lies in $\frac{1}{2}L$ but not in $L$. So we have found a class $[D] \in \NS(S)$ that lies in $L \otimes \mathbb{Q}$ but not in $L$ and, therefore, the embedding of $L$ into $\NS(S)$ cannot be primitive.
\end{example}

In the next section, we will see that the problem of primitive embeddings has a nice solution in the case where $\overline{S}$ is a singular elliptically fibred K3 surface.

\section{Elliptically Fibred K3 Surfaces} \label{section:fibrations}

In this section, we will give a detailed discussion of the construction of K3 surfaces through elliptic fibrations. As we shall see, this is an excellent source of K3 surfaces with lattice polarizations, but in order to get there we will have to develop some of the theory of elliptically fibred surfaces first. Most of the theory presented here was originally developed by Kodaira \cite{ccasI}\cite{casII}\cite{casIII} and Tate \cite{adtsfep}. A self-contained reference for the reader interested in algebraic elliptic surfaces over $\C$ is Miranda's book \cite{btes}, which we will use as our main reference. However, readers who are interested in elliptic surfaces over arbitrary characteristic might find it useful to consult the more algebraic \cite{esI}, whereas those interested in arithmetic results on elliptic surfaces may find \cite{es} or \cite{ataec} more to their tastes. In addition to this, both \cite{bpv} and \cite{sfmcs} contain discussions of elliptic fibrations on complex manifolds.

\subsection{Elliptic Fibrations and $H$-Polarizations}\label{sect:Hpol}

We begin by studying the construction of elliptic surfaces, with an emphasis on K3 surfaces. Start by letting $S$ be an arbitrary algebraic surface. A \emph{genus one fibration} on $S$ is a pair $(S,\pi)$ where $\pi$ is a projective morphism $\pi \colon S \rightarrow B$ to some smooth curve $B$, with generic fibre a smooth curve of genus one.

We say that $(S,\pi)$ is an \emph{elliptic fibration} if $\pi$ admits a section $\mathbb{O} \colon B \rightarrow S$ such that $\pi \circ \mathbb{O} = \Id$. By the Leray spectral sequence we have $h^1(S,\mathcal{O}_S) \geq h^0(B,\mathcal{O}_B)$, with equality if and only if $S$ is not a product $B \times E$ of $B$ with an elliptic curve $E$ \cite[III.4.1]{btes}. Thus if we want $S$ to be a K3 surface, then $B$ must be $\mathbb{P}^1$.

If $(S,\pi)$ is an elliptic fibration on a K3 surface $S$, then we have two obvious curve classes in $S$. Firstly we have the class of the image of $\mathbb{O}$, which we call $[O]$, and secondly we have the class of a fibre $E$. It is easy to see that 
\[\langle [O], [O] \rangle = -2,\quad  \langle[E],[E] \rangle = 0, \quad \langle [E], [O] \rangle = 1.\]
The sublattice of $\NS(S)$ given by $L:= \mathbb{Z}([O] - [E]) \oplus \mathbb{Z}[E]$ has Gram matrix
\[\left( \begin{matrix} 0 & 1 \\ 1 & 0 \end{matrix} \right),\]
so it is isomorphic to the hyperbolic plane lattice $H$. 

Now, the class $(2[E]+[O])$ in $L$ is pseudo-ample, thus condition 2 of Def. \ref{definition:lattice-polarization} holds. Furthermore, if $L$ were not embedded primitively into $\NS(S)$, then there would be some even integral sublattice of $L\otimes \mathbb{Q}$ containing $L$. Since $L$ is unimodular, Thm. \ref{thm:overlattices} from the appendix shows that no such sublattice exists. Thus $L$ is a primitive sublattice of $\NS(S)$, so condition 1 of Def. \ref{definition:lattice-polarization} is also satisfied. 

We therefore see that any elliptic fibration on a K3 surface $S$ defines an $H$-polarization on $S$. Conversely, it is easy to see that any $H$-polarization on $S$ determines an elliptic fibration. In particular, this shows that any elliptically fibred K3 surface is projective. 

\begin{remark} We remind the reader here that our definition of an elliptic fibration comes with a section: the statements above are not necessarily true for more general genus one fibrations. In particular, a K3 surface with a genus one fibration may not admit an $H$-polarization (see Ex. \ref{ex:genusone}) and does not have to be projective. \end{remark}

\subsection{Singular Fibres}

To find a way to construct elliptic surfaces, we must dig a little deeper into the geometry of an elliptic fibration $\pi\colon S \rightarrow B$. We begin with a definition.

\begin{definition}
An elliptic fibration $\pi\colon S \rightarrow B$ is called \emph{relatively minimal} if it is smooth and there is no rational curve $C$ in $S$ with self-intersection $(-1)$ and $\pi(C)$ a point.
\end{definition}

Essentially, relative minimality means that all curves inside of fibres of $\pi$ that can be contracted smoothly have been contracted. Note that if $S$ is a minimal surface to begin with, it will certainly be a relatively minimal elliptic surface. In particular, this means that any elliptic fibration on a K3 surface is a relatively minimal elliptic fibration. 

On the other hand, consider a pair of cubics $C_1$ and $C_2$ in general position in $\mathbb{P}^2$. The intersection $C_1 \cap C_2$ consists of nine points, through which a pencil of cubics (given by taking projective linear combinations of the defining equations of $C_1$ and $C_2$) passes. If we let $S$ be the blow up of $\mathbb{P}^2$ at these nine points, then $S$ admits an elliptic fibration induced by the pencil of elliptic curves passing through the nine points. This fibration is relatively minimal, but the surface $S$ is not minimal (the nine $(-1)$-curves on $S$ are all sections of the fibration).

We now turn our attention to the fibres of a relatively minimal elliptic fibration $\pi\colon S \to B$. Generically the fibres of $\pi$ are smooth elliptic curves. However at certain points, the fibres of $\pi$ may degenerate to singular curves. The number and type of these singular fibres is somewhat controlled by the following theorem:

\begin{theorem} \textup{\cite[Lemma IV.3.3]{btes}} \label{thm:euler} Suppose that $\pi\colon S \to B$ is a relatively minimal elliptic fibration. Let $e(S)$ be the Euler characteristic of the surface $S$ and let $e(p)$ be the Euler characteristic of a fibre $\pi^{-1}(p)$ of $\pi$. Then
\[e(S) = \sum_{p\in B} e(p).\] 
\end{theorem}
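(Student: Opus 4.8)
The plan is to compute the Euler characteristic $e(S)$ by stratifying $S$ according to the fibres of $\pi$ and using the multiplicativity of the Euler characteristic in fibre bundles. First I would observe that since $B$ is a smooth compact curve and $\pi$ is a relatively minimal elliptic fibration, there is a finite set $\Sigma = \{p_1, \dots, p_n\} \subset B$ over which the fibres are singular, and over $B^\circ := B \setminus \Sigma$ the map $\pi$ restricts to a smooth proper submersion $\pi^\circ \colon S^\circ \to B^\circ$ with fibre a smooth genus one curve. A smooth proper submersion is a $C^\infty$ fibre bundle (by Ehresmann's theorem), and the Euler characteristic is multiplicative in fibre bundles, so $e(S^\circ) = e(B^\circ) \cdot e(F)$ where $F$ is a smooth elliptic curve; since $e(F) = 0$, we get $e(S^\circ) = 0$.

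Next I would use the fact that the Euler characteristic (in the sense of compactly supported, or equivalently for complex algebraic varieties ordinary, Euler characteristic) is additive over a decomposition of $S$ into the open set $S^\circ$ and its closed complement $\bigsqcup_{i=1}^n \pi^{-1}(p_i)$. This gives
\[
e(S) = e(S^\circ) + \sum_{i=1}^n e\big(\pi^{-1}(p_i)\big) = \sum_{i=1}^n e\big(\pi^{-1}(p_i)\big).
\]
Finally, for every $p \in B \setminus \Sigma$ the fibre $\pi^{-1}(p)$ is a smooth elliptic curve, so $e(p) = 0$ and these terms contribute nothing; hence the sum over $\Sigma$ equals the sum over all of $B$, yielding $e(S) = \sum_{p \in B} e(p)$ as claimed.

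The two points requiring the most care — and where the real content of the statement lies — are (i) justifying that over $B^\circ$ the family is genuinely a locally trivial $C^\infty$ fibre bundle, which is where one needs that $\pi^\circ$ is a proper submersion with no singular fibres (Ehresmann), together with the multiplicativity $e(\text{total}) = e(\text{base}) \cdot e(\text{fibre})$ for such bundles; and (ii) the additivity of Euler characteristic over the locally closed stratification, which for complex algebraic (or more generally compact complex analytic) spaces is a standard fact but should be invoked explicitly, since ordinary singular-cohomology Euler characteristic is not additive for arbitrary topological decompositions. I expect step (i) to be the main obstacle to a fully rigorous write-up; once the bundle structure over $B^\circ$ is in hand, the rest is bookkeeping. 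An alternative route that sidesteps the fibre-bundle argument is to invoke the Leray spectral sequence for $\pi$ together with a local computation of the stalks of $R^q\pi_*\mathbb{Q}$ at the singular points, but the stratification argument above is cleaner and is the one I would present.
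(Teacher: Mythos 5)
Your proof is correct, and it is essentially the standard argument: the paper itself gives no proof of this statement (it simply cites Miranda's Lemma IV.3.3), and the proof in that reference is exactly your stratification — Ehresmann's theorem over the complement of the finitely many critical values, vanishing of the Euler characteristic of a torus fibre, and additivity of the (compactly supported) Euler characteristic over the decomposition into the smooth locus and the singular fibres. Nothing further is needed.
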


Note that this is actually a finite sum since the Euler characteristic of a smooth fibre is $0$. Since a K3 surface has Euler characteristic $24$, an elliptic fibration on a K3 surface must have singular fibres.

A theorem, originally due to Kodaira \cite{casII}, classifies the singular fibres of smooth minimal elliptic fibrations. This theorem will be presented in full generality in Sect. \ref{section:fibresagain}, but for now the following will suffice.

\begin{theorem}\textup{\cite[Sect. I.4]{btes}}\label{thm:roughfibres}
Let $\pi\colon S \rightarrow B$ be a smooth relatively minimal elliptic fibration. Then any fibre $E$ of $\pi$ is either:
\begin{enumerate}
\item Irreducible, in which case $E$ is either a smooth elliptic curve, or a nodal or cuspidal rational curve; or
\item Reducible, in which case $E$ is a configuration of smooth rational curves $C_i$ with $\langle [C_i],[C_i]\rangle = -2$. In this case $E$ is either a pair of rational curves that are tangent at a point, three rational curves meeting at a single point, or a configuration of rational curves meeting transversely with dual intersection graph of extended ADE type. Furthermore, there are positive integers $a_i$ such that
\begin{equation}
\label{equation:fibres}
[E] =  \sum_{i = 0}^n a_i [C_i].
\end{equation}
\end{enumerate}
\end{theorem}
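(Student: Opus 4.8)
The plan is to follow the standard route to Kodaira's classification: reduce everything to intersection theory on $S$ via Zariski's Lemma, then recognise the resulting combinatorial data as an affine Dynkin diagram, and finally sharpen the combinatorics to geometry in the small-rank cases. Write the fibre as an effective divisor $E = \sum_{i=0}^n a_i C_i$, where $C_0, \dots, C_n$ are its irreducible components and $a_i > 0$; the asserted identity $[E] = \sum a_i [C_i]$ with positive coefficients is just this decomposition. First I would record the numerics. Since the fibres of $\pi$ are all algebraically equivalent and a general fibre is disjoint from $E$, we get $E \cdot C_i = 0$ for every $i$ and $E^2 = 0$; since $\pi$ is flat with general fibre a smooth genus one curve, $p_a(E) = 1$, and with $E^2 = 0$ the adjunction formula then gives $K_S \cdot E = 0$. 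The key structural input is Zariski's Lemma: for a divisor $D = \sum x_i C_i$ supported on the fibre, writing $x_i = a_i y_i$ and using $E \cdot C_i = 0$ together with the connectedness of $E$, one computes $D^2 = -\sum_{i < j} a_i a_j (C_i \cdot C_j)(y_i - y_j)^2 \le 0$, with equality exactly when $D$ is a rational multiple of $E$. Hence the intersection matrix $(C_i \cdot C_j)_{i,j}$ is negative semi-definite with one-dimensional radical spanned by $(a_0, \dots, a_n)$.

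Next I would pin down the individual components. Relative minimality forbids a rational $(-1)$-curve contracted by $\pi$; since every fibre component satisfies $C_i^2 \le 0$ by Zariski's Lemma, a short case analysis with the adjunction formula then shows that every component of a reducible fibre has $K_S \cdot C_i \ge 0$, so $0 = K_S \cdot E = \sum a_i (K_S \cdot C_i)$ with $a_i > 0$ forces $K_S \cdot C_i = 0$ for all $i$ (one could instead quote the canonical bundle formula $K_S \cong \pi^* \mathcal{L}$). Adjunction now reads $C_i^2 = 2p_a(C_i) - 2$. If $E$ is irreducible, then $E = a_0 C_0$ and $\mathbb{O} \cdot E = 1$ forces $a_0 = 1$, so $E = C_0$ is a reduced curve with $C_0^2 = 0$ and $p_a(C_0) = 1$; hence $C_0$ is a smooth elliptic curve, or a rational curve with one node, or a rational curve with one cusp. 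If $E$ is reducible, then $C_i^2 < 0$ for every $i$ (a component with $C_i^2 = 0$ would lie in the radical, forcing $E$ irreducible), and $C_i^2 = 2p_a(C_i) - 2 < 0$ forces $p_a(C_i) = 0$ and $C_i^2 = -2$: every component is a smooth $(-2)$-curve.

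It remains to classify the reducible configurations. The matrix $M := -(C_i \cdot C_j)_{i,j}$ is symmetric with $2$'s on the diagonal and nonpositive integer off-diagonal entries, and is positive semi-definite of corank one; such matrices are exactly the symmetric generalized Cartan matrices of affine type, so the weighted dual graph of $E$ must be one of the affine Dynkin diagrams $\tilde A_n$, $\tilde D_n$ ($n \ge 4$), $\tilde E_6$, $\tilde E_7$, $\tilde E_8$. Translating bonds back into geometry: a bond $C_i \cdot C_j = 2$ between exactly two components (type $\tilde A_1$) means two copies of $\Proj^1$ meeting either at two transverse points or tangentially at one point, giving Kodaira types $I_2$ and $III$; three components with pairwise intersection number $1$ (type $\tilde A_2$) meet either at three distinct points or at one common point, giving types $I_3$ and $IV$; and in all remaining cases every intersection is a transverse node, realising $I_n$ (the cycle $\tilde A_{n-1}$), $I_n^*$ ($\tilde D_{n+4}$), $IV^*$ ($\tilde E_6$), $III^*$ ($\tilde E_7$) and $II^*$ ($\tilde E_8$). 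The coefficients $a_i$ are the positive entries of the radical vector of $M$, which completes the classification.

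I expect the main obstacle to be the combinatorial heart of the last step: showing that a symmetric, positive-semi-definite, corank-one integer matrix with $2$'s on the diagonal is an affine Cartan matrix. This is the classical argument in which one observes that every connected proper subdiagram yields a positive-definite (finite ADE) matrix — which caps vertex valencies and branch lengths and eliminates the over-large exceptional shapes — together with the elementary but genuinely geometric small-rank refinements ($I_2$ versus $III$ and $I_3$ versus $IV$) that the intersection matrix alone cannot detect. An alternative to this entire step is Kodaira's original monodromy analysis, classifying the quasi-unipotent local monodromy around the singular fibre via conjugacy classes in $\mathrm{SL}_2(\Z)$, but the intersection-theoretic route seems cleanest here.
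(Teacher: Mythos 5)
Your argument is correct, and it is essentially the standard intersection-theoretic proof (Zariski's lemma, adjunction plus relative minimality to get $(-2)$-components, then the affine Dynkin classification of negative semi-definite intersection matrices with the small-rank tangency/triple-point refinements) — which is exactly the route taken in the source the paper cites for this theorem, Miranda's Sect.~I.4; the paper itself gives no proof beyond that citation. The only ingredient worth flagging is that your normalization $a_0=1$ in the irreducible case uses the existence of a section, which is legitimate here since the paper's definition of an elliptic fibration includes one.
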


\subsection{Weierstrass Fibrations}

We have seen that if a singular fibre is not a singular elliptic curve, it is composed of smooth rational curves with self-intersection $(-2)$. If we relabel the curves in each singular fibre so that $C_0$ is the unique rational curve intersecting the section $O$, one sees that the configuration 
\[\sum_{i = 1}^na_i [C_i]\]
has self-intersection $(-2)$ and can thus be contracted to an ADE singularity. Once this contraction has been performed, the component $C_0$ becomes a rational curve with a single node or cusp.

If $(S,\pi)$ is a smooth relatively minimal elliptically fibred surface, then let $(\overline{S},\pi)$ be the singular elliptically fibred surface obtained by contracting components of all singular fibres as above. In this way, we may obtain an elliptically fibred surface whose fibres are all irreducible and have arithmetic genus one. Following \cite[Def. II.3.2]{btes}, we call such a surface a \emph{Weierstrass fibration} and we say that $\overline{S}$ is a \emph{Weierstrass model} for $S$. The rationale for these names will become clear in a moment. 

One may show without much difficulty (see \cite[II.3.5]{btes}) that if $\pi\colon \overline{S} \rightarrow B$ is a Weierstrass fibration, then the sheaf $R^1\pi_* \mathcal{O}_{\overline{S}}$ is a line bundle.

\begin{definition}
Let $\pi\colon \overline{S} \rightarrow B$ be a Weierstrass fibration. The  \emph{fundamental line bundle} of $(\overline{S},\pi)$ is defined as 
\[\mathbb{L} := (R^1\pi_*\mathcal{O}_{\overline{S}})^{-1}.\]
\end{definition}

Using this theory, we are able to give a method by which elliptic fibrations can be explicitly constructed.

\begin{theorem}\textup{\cite[Sect. III.1]{btes}} Let $(\overline{S},\pi)$ be a Weierstrass fibration over a smooth curve $B$. Then
\begin{enumerate}
\item There is an embedding,
\[f\colon \overline{S} \hookrightarrow \mathbb{P}(\mathcal{O}_B \oplus \mathbb{L}^{-2} \oplus \mathbb{L}^{-3}).\]
\item If $p\colon \mathbb{P}(\mathcal{O}_B \oplus \mathbb{L}^{-2} \oplus \mathbb{L}^{-3}) \rightarrow B$ is the natural projection map, then $p \circ f = \pi$. 
\item The hypersurface $\overline{S}$ is given by the vanishing of a section of $\mathcal{O}_\mathbb{P}(1)^3 \otimes p^*\mathbb{L}^6$ where $\mathcal{O}_\mathbb{P}(1)$ is the inverse of the tautological bundle on the projective bundle $\mathbb{P}(\mathcal{O}_B \oplus \mathbb{L}^{-2} \oplus \mathbb{L}^{-3})$.
\item $\overline{S}$ can be written as the vanishing locus of 
\[ZY^2 = X^3 + \alpha XZ^2 + \beta Z^3\]
where $\alpha$ and $\beta$ are global sections of $p^*\mathbb{L}^4$ and $p^*\mathbb{L}^6$, and $X,Y,Z$ are global sections of $\mathcal{O}_\mathbb{P}(1) \otimes p^*\mathbb{L}^2$, $\mathcal{O}_\mathbb{P}(1) \otimes p^*\mathbb{L}^3$ and $\mathcal{O}_\mathbb{P}(1)$ respectively.
\end{enumerate}
\end{theorem}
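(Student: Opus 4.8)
The plan is to construct the Weierstrass embedding directly by examining sections of the relative canonical (or dualizing) sheaf. The starting point is the standard fact, which follows from relative duality for the Weierstrass fibration $\pi \colon \overline{S} \to B$, that $\omega_{\overline{S}/B} \cong \pi^* \mathbb{L}$ where $\mathbb{L} = (R^1\pi_*\calO_{\overline{S}})^{-1}$ is the fundamental line bundle. Since every fibre of a Weierstrass fibration is an irreducible curve of arithmetic genus one with a marked smooth point (the section $\calO$), I would work fibrewise with the linear systems $|n \cdot \calO|$ on each fibre: by Riemann--Roch on a genus one curve, $h^0(n\calO) = n$ for $n \geq 1$, and the usual analysis of the elliptic curve embedding shows that $|2\calO|$ gives a $2:1$ map to $\Proj^1$ and $|3\calO|$ gives an embedding as a plane cubic. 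Globalizing, I would form the sheaves $\pi_* \calO_{\overline{S}}(n\calO)$ for $n = 0, 1, 2, 3$; these are locally free of ranks $1, 1, 2, 3$ respectively, and there is a filtration whose graded pieces are computed by pushing forward the exact sequences $0 \to \calO_{\overline{S}}((n-1)\calO) \to \calO_{\overline{S}}(n\calO) \to \calO_{\calO}(n\calO) \to 0$, using $\calO_{\calO}(\calO) \cong \mathbb{L}^{-1}$ (the normal bundle of the section, which is where the twist by $\mathbb{L}$ enters).

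Carrying this out, the key computation is that $\pi_*\calO_{\overline{S}}(2\calO) \cong \calO_B \oplus \mathbb{L}^{-2}$ and $\pi_*\calO_{\overline{S}}(3\calO) \cong \calO_B \oplus \mathbb{L}^{-2} \oplus \mathbb{L}^{-3}$, with the extensions split because there are no obstructions (the relevant $\mathrm{Ext}^1$ vanishes, or one splits using the section). This immediately yields part (1) and part (2): the complete linear system $|3\calO|$ relative to $B$ defines a morphism $f \colon \overline{S} \to \Proj(\pi_*\calO_{\overline{S}}(3\calO)) = \Proj(\calO_B \oplus \mathbb{L}^{-2} \oplus \mathbb{L}^{-3})$ over $B$, which is a closed embedding because it is so on each fibre (a smooth plane cubic, or its degenerations), and compatibility with the projections is built into the relative construction. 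For part (3), I would identify $f^*\calO_{\Proj}(1)$ with $\calO_{\overline{S}}(3\calO)$, compute the class of $\overline{S}$ inside the projective bundle as a divisor of relative degree $3$ (since a plane cubic has degree $3$), and pin down the twist by $p^*\mathbb{L}^6$ by a degree count using $\omega_{\overline{S}/B} \cong \pi^*\mathbb{L}$ together with the adjunction formula $\omega_{\overline{S}/B} \cong (\omega_{\Proj/B} \otimes \calO_{\Proj}(\overline{S}))|_{\overline{S}}$ and the known relative canonical bundle of a $\Proj^2$-bundle (namely $\calO_{\Proj}(-3) \otimes p^*(\det \calE)$ for $\calE = \calO_B \oplus \mathbb{L}^{-2} \oplus \mathbb{L}^{-3}$, whose determinant is $\mathbb{L}^{-5}$).

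For part (4), I would write down the three homogeneous coordinates on the fibres. The summand $\calO_B$ of $\calE$ gives a section $Z$ of $\calO_{\Proj}(1)$; tensoring appropriately, $X \in H^0(\calO_{\Proj}(1) \otimes p^*\mathbb{L}^2)$ and $Y \in H^0(\calO_{\Proj}(1)\otimes p^*\mathbb{L}^3)$ correspond to the $\mathbb{L}^{-2}$ and $\mathbb{L}^{-3}$ summands. Restricting the defining section of $\calO_{\Proj}(1)^{\otimes 3} \otimes p^*\mathbb{L}^6$ to the fibres and using that each fibre is a Weierstrass cubic in the marked point $\calO$ at $[0:1:0]$, the equation must take the shape $ZY^2 = X^3 + \alpha XZ^2 + \beta Z^3$; matching twists forces $\alpha \in H^0(p^*\mathbb{L}^4)$ and $\beta \in H^0(p^*\mathbb{L}^6)$. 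I expect the main obstacle to be the bookkeeping of twists rather than anything conceptual: one must be careful and consistent about the convention $\calO_{\Proj}(1)$ versus $\calO_{\Proj}(-1)$ and about whether $\mathbb{L}$ or $\mathbb{L}^{-1}$ appears, since a single sign error propagates through every exponent. The cleanest way to control this is to fix all normalizations by a local computation over an open set of $B$ where $\mathbb{L}$ is trivial, recovering the classical affine Weierstrass form $y^2 = x^3 + \alpha x + \beta$, and then to determine the global twists purely by the requirement that $\alpha, \beta$ and $X, Y, Z$ patch into honest sections of the claimed line bundles.
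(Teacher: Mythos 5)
Your outline follows essentially the same route as the proof in the cited source (Miranda, Sect.~II.5 and III.1): push forward $\calO_{\overline{S}}(nO)$, identify $\pi_*\calO_{\overline{S}}(3O)\cong\calO_B\oplus\mathbb{L}^{-2}\oplus\mathbb{L}^{-3}$, embed via the relative linear system $|3O|$, and read off the global Weierstrass equation by matching twists. The fibrewise statements, the identification of the normal bundle of the section with $\mathbb{L}^{-1}$, the adjunction computation for the class of $\overline{S}$, and your plan to fix all conventions by a local computation are all fine and are exactly how the standard argument goes.

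The one genuine gap is your justification of the splitting of the filtration. The extension $0\to\pi_*\calO_{\overline{S}}((n-1)O)\to\pi_*\calO_{\overline{S}}(nO)\to\mathbb{L}^{-n}\to0$ has obstruction class in $H^1\bigl(B,\pi_*\calO_{\overline{S}}((n-1)O)\otimes\mathbb{L}^{n}\bigr)$, e.g.\ $H^1(B,\mathbb{L}^2)$ for $n=2$; this group does \emph{not} vanish in general (the theorem is stated over an arbitrary smooth curve $B$, and for instance an isotrivial fibration over a base of positive genus with $\deg\mathbb{L}$ small gives $H^1(B,\mathbb{L}^2)\neq0$), so ``the relevant $\mathrm{Ext}^1$ vanishes'' only works in special cases such as $B=\Proj^1$, which covers the K3 application but not the statement as given. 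Likewise ``one splits using the section'' is not a proof: the section only supplies the subsheaf $\calO_B\hookrightarrow\pi_*\calO_{\overline{S}}(nO)$, not a retraction onto $\mathbb{L}^{-n}$. The actual argument (in characteristic zero) is to choose, locally over $B$, functions $x,y$ with poles of order exactly $2$ and $3$ along $O$, normalized by completing the square and the cube so that the local equation is in short Weierstrass form; uniqueness of this normal form forces the transition relations to be pure scalings $x\mapsto u^2x$, $y\mapsto u^3y$, with $u$ a cocycle for $\mathbb{L}^{-1}$, so these local sections glue to bundle maps $\mathbb{L}^{-2}\to\pi_*\calO_{\overline{S}}(2O)$ and $\mathbb{L}^{-3}\to\pi_*\calO_{\overline{S}}(3O)$ splitting the surjections. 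This splitting is also precisely what part (4) needs, since the global short Weierstrass form is equivalent to having the distinguished summands that give $X$, $Y$, $Z$; once it is supplied, the rest of your argument goes through as written.
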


\begin{remark} We will often refer to $\alpha$ and $\beta$ as sections of $\mathbb{L}^4$ and $\mathbb{L}^6$ respectively, using the fact that the spaces of sections of $\mathbb{L}^4$ and $\mathbb{L}^6$ are isomorphic to the spaces of sections of their pull-backs under $p$.
\end{remark}

This explains the meaning of the name ``Weierstrass fibration'': such surfaces admit expressions which are completely analogous to the Weierstrass form of an elliptic curve over a number field. This may be viewed as a refined version of the fact that the generic fibre of $(\overline{S},\pi)$ is an elliptic curve over the function field $\mathbb{C}(B)$, which may itself be expressed in Weierstrass form (see, for instance, \cite[Chap. II]{btes}).

Our next aim is to find conditions under which this construction gives a K3 surface. We already know that  $h^1(S,\mathcal{O}_S) = 0$ if and only if the base curve $B$ is $\Proj^1$, so it just remains to compute the canonical bundle.

It is easy to compute the dualizing sheaf of a Weierstrass fibration. If $\omega_B$ is the canonical bundle on the curve $B$, then 
\[\omega_{\mathbb{P}(\mathcal{O}_B \oplus \mathbb{L}^{-2} \oplus \mathbb{L}^{-3})} \cong p^*(\omega_B \otimes \mathbb{L}^{-5}) \otimes \calO_{\Proj}(1)^{-3}. \]
Using this, adjunction gives:

\begin{proposition}\textup{\cite[Prop. III.1.1]{btes}} The dualizing sheaf of $\overline{S}$ is given by
\[\omega_{\overline{S}} = \pi^*(\omega_B \otimes \mathbb{L}).\]
\end{proposition}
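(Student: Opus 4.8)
The plan is to deduce the formula $\omega_{\overline{S}} = \pi^*(\omega_B \otimes \mathbb{L})$ from the adjunction formula applied to the embedding $f\colon \overline{S} \hookrightarrow \mathbb{P}(\mathcal{O}_B \oplus \mathbb{L}^{-2} \oplus \mathbb{L}^{-3})$, using the formula for the dualizing sheaf of the ambient projective bundle that has just been stated. Write $P := \mathbb{P}(\mathcal{O}_B \oplus \mathbb{L}^{-2} \oplus \mathbb{L}^{-3})$ and $p\colon P \to B$ for the projection. The two ingredients already available are: the ambient canonical bundle $\omega_P \cong p^*(\omega_B \otimes \mathbb{L}^{-5}) \otimes \calO_{\Proj}(1)^{-3}$, and the fact (part (3) of the preceding theorem) that $\overline{S}$ is cut out by a section of the line bundle $\calO_{\Proj}(1)^3 \otimes p^*\mathbb{L}^6$ on $P$, so that the normal bundle is $\calO_{\overline{S}}(\overline{S}) \cong \big(\calO_{\Proj}(1)^3 \otimes p^*\mathbb{L}^6\big)\big|_{\overline{S}}$.

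First I would invoke the adjunction formula in the form $\omega_{\overline{S}} \cong \big(\omega_P \otimes \calO_P(\overline{S})\big)\big|_{\overline{S}}$, noting that since $\overline{S}$ is a (possibly singular but Gorenstein) divisor in the smooth variety $P$ this holds for dualizing sheaves, not just for the smooth locus — this is exactly the setting in which \cite[Prop. III.1.1]{btes} is phrased. Next I would substitute the two expressions above and simplify: $\omega_P \otimes \calO_P(\overline{S}) \cong p^*(\omega_B \otimes \mathbb{L}^{-5}) \otimes \calO_{\Proj}(1)^{-3} \otimes \calO_{\Proj}(1)^3 \otimes p^*\mathbb{L}^6 \cong p^*(\omega_B \otimes \mathbb{L})$. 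The $\calO_{\Proj}(1)^{\pm 3}$ factors cancel exactly, which is the reason the Weierstrass normal bundle was arranged to have this particular twist. Finally, restricting to $\overline{S}$ and using $p \circ f = \pi$ (part (2)) gives $\big(p^*(\omega_B \otimes \mathbb{L})\big)\big|_{\overline{S}} \cong \pi^*(\omega_B \otimes \mathbb{L})$, which is the claimed identity.

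The only genuine subtlety — and the step I expect to be the main obstacle — is the justification of the adjunction formula for the dualizing sheaf of the possibly singular surface $\overline{S}$. One must know that $\overline{S}$ has only Gorenstein singularities (equivalently, admits an invertible dualizing sheaf), which follows because $\overline{S}$ is a Cartier divisor in the smooth threefold $P$; for such divisors the usual adjunction $\omega_{\overline{S}} \cong \omega_P|_{\overline{S}} \otimes \calO_{\overline{S}}(\overline{S})$ is valid with dualizing sheaves throughout. This is a standard fact (e.g. via the Koszul resolution $0 \to \calO_P(-\overline{S}) \to \calO_P \to \calO_{\overline{S}} \to 0$ and the formula $\omega_{\overline{S}} \cong \mathcal{E}xt^1_{\calO_P}(\calO_{\overline{S}}, \omega_P)$), so in practice one simply cites it; indeed the reference \cite[Prop. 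III.1.1]{btes} records precisely this computation, so the proof amounts to assembling the displayed formulas and performing the cancellation above.
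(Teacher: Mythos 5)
Your proposal is correct and is essentially the paper's own argument: the paper derives the proposition by adjunction from the displayed formula for $\omega_{\mathbb{P}(\mathcal{O}_B \oplus \mathbb{L}^{-2} \oplus \mathbb{L}^{-3})}$ together with the fact that $\overline{S}$ is cut out by a section of $\mathcal{O}_{\mathbb{P}}(1)^3 \otimes p^*\mathbb{L}^6$, exactly the cancellation you perform. Your extra care about adjunction for the dualizing sheaf of the (possibly singular, but Cartier-divisor-in-a-smooth-threefold, hence Gorenstein) surface $\overline{S}$ is a point the paper leaves implicit by citing \cite[Prop. III.1.1]{btes}, and you resolve it correctly.
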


If $(S,\pi)$ is an elliptically fibred K3 surface and $(\overline{S},\pi)$ is the Weierstrass model associated to $(S,\pi)$, then $\omega_{\overline{S}} \cong \calO_{\overline{S}}$. Therefore, if $S$ is a K3 surface, then $\mathbb{L} \cong \omega_{\Proj^1}^{-1} \cong \mathcal{O}_{\mathbb{P}^1}(2)$. The converse is also true: if $\mathbb{L} \cong \mathcal{O}_{\mathbb{P}^1}(2)$ then the minimal resolution $S$ of the Weierstrass model $\overline{S}$ is a K3 surface (the proof is easy, but relies upon the fact that the exceptional curves in the resolution of an ADE singularity do not contribute to the canonical bundle; in technical language we say that ADE singularities admit \emph{crepant} resolutions). Thus we find:

\begin{proposition} Let $\pi\colon S \to B$ be an elliptic fibration, $(\overline{S},\pi)$ be its Weierstrass model, and $\mathbb{L} := (R^1\pi_*\mathcal{O}_{\overline{S}})^{-1}$ be its fundamental line bundle. Then $S$ is a K3 surface if and only if $B = \Proj^1$ and $\mathbb{L} \cong \calO_{\Proj^1}(2)$.
\end{proposition}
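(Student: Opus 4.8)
The plan is to prove both implications from the formula $\omega_{\overline{S}} = \pi^*(\omega_B \otimes \mathbb{L})$ of the preceding proposition, the fact (already noted after the definition of an elliptic fibration, via \cite[III.4.1]{btes}) that $h^1(S,\calO_S)=0$ forces $B=\Proj^1$, and one genuinely geometric input: the minimal resolution $\mu\colon S \to \overline{S}$ of the Weierstrass model is crepant because the singularities of $\overline{S}$ are of ADE type. Throughout, write $\mu\colon S\to\overline{S}$ for the contraction of fibre components producing the Weierstrass model, so that the elliptic fibration on $S$ factors as $\pi\circ\mu$ (abusively still denoted $\pi$). Since $\overline{S}$ is normal with only ADE (rational double point) singularities, $K_{\overline{S}}$ is Cartier, $\mu$ is crepant so $\omega_S\cong\mu^*\omega_{\overline{S}}$, and the singularities are rational so $\mu_*\calO_S=\calO_{\overline{S}}$ and $R^i\mu_*\calO_S=0$ for $i>0$.

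\textbf{($\Rightarrow$).} Suppose $S$ is a K3 surface. Then $h^1(S,\calO_S)=0$ gives $B=\Proj^1$ as above. From $\omega_S\cong\calO_S$ and crepancy, $\mu^*\omega_{\overline{S}}\cong\calO_S$; pushing forward and applying the projection formula with $\mu_*\calO_S=\calO_{\overline{S}}$ yields $\omega_{\overline{S}}\cong\calO_{\overline{S}}$. Substituting into $\omega_{\overline{S}}=\pi^*(\omega_B\otimes\mathbb{L})$, pushing down along $\pi$, and using $\pi_*\calO_{\overline{S}}=\calO_B$ (connected fibres, since $\pi$ has a section) together with the projection formula once more, gives $\omega_B\otimes\mathbb{L}\cong\calO_B$, i.e. $\mathbb{L}\cong\omega_B^{-1}\cong\calO_{\Proj^1}(2)$.

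\textbf{($\Leftarrow$).} Conversely, assume $B=\Proj^1$ and $\mathbb{L}\cong\calO_{\Proj^1}(2)$, so $\omega_B\otimes\mathbb{L}\cong\calO_{\Proj^1}$ and hence $\omega_{\overline{S}}\cong\calO_{\overline{S}}$ by the proposition, giving $\omega_S\cong\mu^*\omega_{\overline{S}}\cong\calO_S$. It remains to verify $h^1(S,\calO_S)=0$. Since $\overline{S}$ has rational singularities, $R\mu_*\calO_S=\calO_{\overline{S}}$, so the Leray spectral sequence for $\pi\circ\mu$ identifies $R^i\pi_*\calO_S$ with $R^i\pi_*\calO_{\overline{S}}$; in particular $\pi_*\calO_S=\calO_{\Proj^1}$ and $R^1\pi_*\calO_S\cong R^1\pi_*\calO_{\overline{S}}=\mathbb{L}^{-1}\cong\calO_{\Proj^1}(-2)$. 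The Leray spectral sequence for $\pi$ then gives $h^1(S,\calO_S)=h^1(\Proj^1,\calO_{\Proj^1})+h^0(\Proj^1,\calO_{\Proj^1}(-2))=0$. As $S$ is a smooth compact complex surface, it is a K3 surface.

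The main obstacle is the crepancy package $\omega_S\cong\mu^*\omega_{\overline{S}}$ together with $R\mu_*\calO_S=\calO_{\overline{S}}$ — precisely the assertion, flagged in the text, that ADE singularities are canonical and rational and so contribute nothing to either the canonical bundle or to $h^1(\calO)$; I would quote this rather than reprove it. Everything else is bookkeeping with the projection formula and the Leray spectral sequence, the only point requiring a moment's care being the verification that the fibration on $S$ really is $\pi\circ\mu$ and that $\pi$ on $\overline{S}$ has connected fibres, so that $\pi_*\calO_{\overline{S}}=\calO_B$.
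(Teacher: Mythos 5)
Your proposal is correct and follows the same route the paper sketches: combine $\omega_{\overline{S}} \cong \pi^*(\omega_B \otimes \mathbb{L})$, the fact that $h^1(S,\calO_S)=0$ forces $B=\Proj^1$, and the crepancy/rationality of ADE singularities, which is exactly the ingredient the text flags as the key point. You have simply filled in the projection-formula and Leray bookkeeping that the paper leaves to the reader, so no further changes are needed.
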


Using this, we may express any elliptically fibred K3 surface as a hypersurface in $\mathbb{P}(\mathcal{O}_{\mathbb{P}^1}  \oplus\mathcal{O}_{\mathbb{P}^1}(-4) \oplus\mathcal{O}_{\mathbb{P}^1}(-6))$ given by an equation
\[ZY^2 = X^3 + \alpha(s,t)XZ^2 + \beta(s,t)Z^3\]
where $(s,t)$ are coordinates on $\Proj^1$ and $\alpha(s,t)$ and $\beta(s,t)$ are homogeneous polynomials in $s$ and $t$ of degrees $8$ and $12$ respectively.

\begin{remark} The projective bundle $\mathbb{P}(\mathcal{O}_{\mathbb{P}^1}  \oplus\mathcal{O}_{\mathbb{P}^1}(-4) \oplus\mathcal{O}_{\mathbb{P}^1}(-6))$ admits a birational contraction map to the weighted projective space $\mathbb{WP}(1,1,4,6)$. Under this contraction, elliptic K3 surfaces are expressed as weighted projective hypersurfaces of the form
\[y^2  = x^3 + \alpha(s,t)x + \beta(s,t)\]
with $\alpha(s,t)$ and $\beta(s,t)$ exactly as above. Here $x$ has weight $4$, $y$ has weight $6$, and $s$, and $t$ both have weight $1$.
\end{remark}

\begin{remark}
\label{remark:Hpol}
A general K3 surface with elliptic fibration is a smooth hypersurface in $\mathbb{P}(\mathcal{O}_{\mathbb{P}^1}  \oplus\mathcal{O}_{\mathbb{P}^1}(-4) \oplus\mathcal{O}_{\mathbb{P}^1}(-6))$, which has Picard rank $2$. By the technique of Sect. \ref{section:embeddings}, one may show that this embedding induces a pseudo-ample $H$-polarization on these K3 surfaces. This fits with our observation from Sect. \ref{sect:Hpol}, that elliptically fibred K3 surfaces all admit $H$-polarizations.
\end{remark}

\subsection{Singular Fibres Revisited}
\label{section:fibresagain}

Now that we know how to construct elliptically fibred K3 surfaces, our next task is to find lattice polarizations on them. In order to do this, we will need to perform a closer study of the N\'{e}ron-Severi lattice of an elliptically fibred surface. We begin with a closer examination of the singular fibres.

Assume that we begin with a (possibly singular) elliptically fibred surface $(\overline{S},\pi)$ in Weierstrass form. We would like to use the local behaviour of the Weierstrass equation of $(\overline{S},\pi)$ to describe the configurations of divisors arising from resolution of the singularities of $\overline{S}$. 

First of all, it is clear that a fibre of $(\overline{S},\pi)$ is a singular elliptic curve if and only if the discriminant of the cubic
\[ZY^2 = X^3 + \alpha XZ^2 +  \beta Z^3\]
vanishes. As usual, we may express this discriminant as a polynomial in $\alpha$ and $\beta$, giving
\[\Delta = 4\alpha^3 + 27\beta^2 \in H^0(B,\mathbb{L}^{12}).\]
The points at which $\Delta$ vanishes correspond to the discriminant locus of the fibration $(\overline{S},\pi)$. 

Kodaira \cite{casII} and Tate \cite{adtsfep} showed how to use the local behaviour of $\alpha,\beta$ and $\Delta$ to detect singularities in the surface $\overline{S}$ and computed the minimal resolutions of these singularities.

\begin{theorem}\textup{\cite[Sect. IV.3]{btes}} \label{theorem:Tate}
Let $(S,\pi)$ be a smooth relatively minimal elliptically fibred surface and let $(\overline{S},\pi)$ be its Weierstrass model. Let $\mathbb{L}$ be the fundamental line bundle of $(\overline{S},\pi)$ and let $\alpha \in H^0(B,\mathbb{L}^4)$ and $\beta \in H^0(B,\mathbb{L}^6)$ be the sections defining $\overline{S}$. Let $\Delta \in H^0(B,\mathbb{L}^{12})$ be the discrimant.

Denote by $\nu_p(\alpha)$ the order of vanishing of $\alpha$ at the point $p$, by $\nu_p(\beta)$ the order of vanishing of $\beta$ at $p$ and by $\nu_p(\Delta)$ the order of vanishing of $\Delta$ at $p$. Then the fibre $\pi^{-1}(p)$ is singular if and only if $\Delta(p) = 0$. The singularity of $\overline{S}$ lying over $p$, along with a description of the singular fibre $\pi^{-1}(p)$ in $S$ and its Euler characteristic $e$ are given by Table \ref{table:singfib}.
\end{theorem}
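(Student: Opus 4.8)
\emph{Proof idea.} The statement is local on the base, so the plan is first to reduce everything to a computation in a neighbourhood of $p$. Choose a local coordinate $t$ on $B$ centred at $p$ and a local trivialisation of $\mathbb{L}$; then the Weierstrass equation of $(\overline{S},\pi)$ becomes $y^2 = x^3 + \alpha(t)x + \beta(t)$ with $\alpha,\beta$ holomorphic near $t=0$ and $\Delta(t) = 4\alpha(t)^3 + 27\beta(t)^2$. The fibre $\pi^{-1}(p)$ is the plane cubic $\{y^2 = x^3 + \alpha(0)x + \beta(0)\}$ together with the point at infinity, which is always a smooth point; hence the fibre is singular if and only if the discriminant of this cubic vanishes, i.e. $\Delta(p) = 0$. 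This disposes of the first assertion, and when $\Delta(p) = 0$ a short calculation shows that the singular point of the special fibre --- which we may translate to $(x,y) = (0,0)$ --- is the only point where $\overline{S}$ can fail to be smooth, and that the singularity there is an isolated surface singularity.

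The core of the proof is then the Kodaira--Tate algorithm: one classifies the singularity of $\overline{S}$ over $p$, together with the fibre obtained after minimal resolution, according to the triple of vanishing orders $(\nu_p(\alpha),\nu_p(\beta),\nu_p(\Delta))$. Relative minimality of $(S,\pi)$ is exactly the hypothesis that rules out the case $\nu_p(\alpha)\geq 4$ and $\nu_p(\beta)\geq 6$ simultaneously: in that situation the substitution $\alpha \mapsto t^{-4}\alpha$, $\beta \mapsto t^{-6}\beta$ (i.e.\ a twist of $\mathbb{L}$) produces another Weierstrass model containing a contractible $(-1)$-curve in the fibre, contradicting minimality. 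The remaining possibilities for $(\nu_p(\alpha),\nu_p(\beta),\nu_p(\Delta))$ are finite in number and are precisely the rows of Table \ref{table:singfib}; a preliminary remark is needed to see that this list is well defined, since $\nu_p(\alpha)$ and $\nu_p(\beta)$ are invariant only under the coordinate changes $x \mapsto x + t^k(\cdots)$, $y \mapsto y + t^{\ell}(\cdots)$ and the scalings above that preserve Weierstrass form.

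For each row one carries out the resolution explicitly: blow up the singular point of $\overline{S}$ over $p$, examine the resulting charts (which frequently still contain a node or a cusp), and iterate until the total space is smooth. Each exceptional curve is a smooth rational $(-2)$-curve, and one records the dual intersection graph of the exceptional configuration together with its image in $S$; by Thm. \ref{thm:roughfibres} this graph must be of extended ADE type (or the fibre is irreducible, in types $I_1$ and $II$), and the calculation identifies which one, the multiplicities $a_i$ of Eq.~\eqref{equation:fibres}, and the singularity type ($A_{n-1}$, $D_{n+4}$, $E_6$, $E_7$, $E_8$, or none) of $\overline{S}$. Finally one computes the Euler characteristic $e$ of each fibre from its description as a union of rational curves; one finds $e = \nu_p(\Delta)$ in every case, which is both the entry recorded in the table and a consistency check against Thm. \ref{thm:euler} once all fibre types have been enumerated.

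I expect the genuine obstacle to be the length and delicacy of this case analysis rather than any single conceptual point. The $I_n$ and $I_n^*$ families require an internal induction on $n$ --- after producing the $\widetilde{A}_1$, respectively $\widetilde{D}_4$, configuration one must track how a further node propagates under successive blow-ups --- and the $E$-type fibres $IV^*$, $III^*$, $II^*$ involve several blow-ups during which the fibre passes through cuspidal intermediate stages, so the bookkeeping of multiplicities and self-intersections is where errors are easiest to make. Keeping the normal-form reductions consistent throughout, so that the vanishing orders one reads off are the coordinate-invariant ones, is the other point demanding care. A fully detailed treatment may be found in \cite[Sect. IV.3]{btes}.
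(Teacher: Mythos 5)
Your sketch is correct and follows the standard Kodaira--Tate argument (local normal form, minimality excluding $\nu_p(\alpha)\geq 4$ with $\nu_p(\beta)\geq 6$, then explicit case-by-case resolution with $e=\nu_p(\Delta)$ as a check), which is precisely the approach of the source the paper relies on: the paper itself gives no proof of Thm.~\ref{theorem:Tate} but simply cites \cite[Sect.~IV.3]{btes}. So there is nothing to compare beyond noting that your outline matches that cited treatment.
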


\begin{table}
\caption{Singular fibres of a smooth relatively minimal elliptically fibred surface}
\small{
    \begin{tabular}{ccccccp{4cm}}
\hline\noalign{\smallskip}
Name & $\nu_p(\alpha)$ & $\nu_p(\beta)$  & $\nu_p(\Delta)$ & Singularity in $\overline{S}$ & $e$ & Description \\
\noalign{\smallskip}\hline\hline\noalign{\smallskip}
$I_1$ & $0$ & $0$ & $1$ & Smooth & $1$ & Nodal rational curve\\ \hline\noalign{\smallskip}
$I_n$ & $0$ & $0$ & $n$ & $A_{n-1}$ & $n$ &  Cycle of $n$ smooth rational curves with dual graph $\widetilde{A}_{n-1}$ \\ \hline\noalign{\smallskip}
$I_0^*$ & $2$ & $3$ & $6$ & $D_4$ & $6$ & Configuration of $5$ smooth\\ 
             & $\geq 3$ & $3$ & $6$ &   &    &    rational curves with dual  \\ 
             & $2$ & $\geq 4$  & $6$   &  &   &        graph $\widetilde{D}_4$  \\ \hline\noalign{\smallskip}
$I_n^*$ & $2$ & $3$ & $n+6$ & $D_{4+n}$ & $n+6$ & Configuration of $n+5$ smooth rational curves with dual graph $\widetilde{D}_{n+4}$ \\  \hline\noalign{\smallskip}
$II$ &$\geq 1$ & $1$ & $2$ & Smooth & $2$ & Cuspidal rational curve \\ \hline\noalign{\smallskip}
$III$ & $1$ & $\geq 2$ & $3$ & $A_1$ & $3$ &Two rational curves tangent at a point \\  \hline\noalign{\smallskip}
$IV$ & $\geq 2$ & $2$ & $4$ &  $A_2$ & $4$ & Three smooth rational curves meeting at a single point \\ \hline\noalign{\smallskip}
$IV^*$ &$\geq 3$ & $4$ & $8$ & $E_6$ & $8$ & Configuration of $7$ smooth rational curves with dual graph $\widetilde{E}_6$ \\  \hline\noalign{\smallskip}
$III^*$ &$3$ & $\geq 5$ & $9$ & $E_7$ & $9$ & Configuration of $8$ smooth rational curves with dual graph $\widetilde{E}_7$ \\  \hline\noalign{\smallskip}
$II^*$ &$\geq 4$ & $5$ & $10$ & $E_8$ & $10$ & Configuration of $9$ smooth rational curves with dual graph $\widetilde{E}_8$ \\  \hline\noalign{\smallskip}
    \end{tabular}
}
\label{table:singfib}
\end{table}

\begin{remark}If the values of $\nu_p(\alpha),\nu_p(\beta)$ and $\nu_p(\Delta)$ do not fall into one of the classes described in Table \ref{table:singfib}, then the singularities of $\overline{S}$ are worse than ADE singularities and $\overline{S}$ is not the Weierstrass model of a smooth relatively minimal elliptically fibred surface.\end{remark}

\subsection{Mordell-Weil Group}

Theorem \ref{theorem:Tate} enables us to compute the classes in the N\'{e}ron-Severi lattice that arise from components of fibres of the elliptic fibration. However, to obtain the full N\'{e}ron-Severi lattice, we also have to know about the classes coming from sections. This data is encoded by a second object, the \emph{Mordell-Weil group}.

As usual, we let $(S,\pi)$ an elliptically fibred surface. Choose an arbitrary section $\mathbb{O}$ and, as before, let its image be denoted $O$.

Let $\eta$ be the generic point on the base $B$. Any section $Q$ determines a $\mathbb{C}(B)$-rational point $\widetilde{Q}$ on the generic fibre $S_\eta$ and, in fact, there is a bijective correspondence between sections of $(S,\pi)$ and $\mathbb{C}(B)$-rational points of $S_\eta$. To see this, note that every $\mathbb{C}(B)$-rational point $\widetilde{Q}$ is actually a $\mathbb{C}[B_0]$ point of $S|_{B_0}$ over some open set $B_0 \subseteq B$; this bijection associates to $\tilde{Q}$ the closure in $S$ of the corresponding point in $\mathbb{C}[B_0]$. 

The group structure on $S_\eta$ allows us to add two sections ${Q}_1$ and ${Q}_2$, by letting $Q_1 + Q_2$ be the closure of  the $\mathbb{C}(B)$-rational point $\widetilde{Q}_1 \oplus \widetilde{Q}_2$ (where $\oplus$ is used to indicate addition in the group structure on $S_\eta$, defined with respect to the zero section $\widetilde{O}$).

\begin{definition}
Let $(S,\pi)$ be an elliptically fibred surface with a chosen zero section $O$. Then the set of sections of $(S,\pi)$, equipped with the group structure defined above, is called the \emph{Mordell-Weil group} of $(S,\pi)$ and denoted $\MW(S,\pi)$.
\end{definition}

By this prescription, the group structure on $\MW(S,\pi)$ corresponds directly to the pointwise addition in each smooth fibre. Note that, as usual, the structure of $\MW(S,\pi)$ does not depend upon the choice of section $O$, hence our notation for the Mordell-Weil group does not make reference to it.

\subsection{The N\'{e}ron-Severi Lattice}

Now we determine the relationship between the N\'eron-Severi lattice of a smooth relatively minimal elliptically fibred surface $(S,\pi)$, its Mordell-Weil group, and its singular fibres. 

Let $O$ be a designated section in $\MW(S,\pi)$, which we will use as the identity element. To each fibre $\pi^{-1}(p)$ of $(S,\pi)$ we may associate a root lattice $R_p$ as follows: write $\pi^{-1}(p)$ as a sum of irreducible components $C_0,\ldots,C_n$ as before, labelled so that $C_0$ is the component which intersects the zero section $O$, then let $R_p$ be the sublattice of $\NS(S)$ generated by the classes of $C_1, \dots , C_n$. Note that if $\pi^{-1}(p)$ is irreducible, then $R_p$ is trivial. Next define a sublattice $L_\pi$ of $\NS(S)$ to be the sublattice spanned by the classes $[E], [O]$ and the lattices $R_p$ for all points $p \in B$.

\begin{theorem}\textup{\cite[Thm. VII.2.1]{btes}}
\label{theorem:NSEF}
There is an exact sequence of abelian groups,
\[0 \longrightarrow L_\pi \stackrel{a}{\longrightarrow} \NS(S) \stackrel{b}{\longrightarrow} \MW(S,\pi) \longrightarrow 0,\]
where $a$ is the obvious embedding and $b$ is the composition of the restriction to the generic fibre $\NS(S) \to \Pic(S_{\eta})$ with the homomorphism $\Pic(S_{\eta}) \to \MW(S,\pi)$. In particular, $b$ assigns to a section $Q$ of $(S,\pi)$ the associated class in $\MW(S,\pi)$.
\end{theorem}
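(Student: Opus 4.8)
The plan is to prove exactness by checking each spot separately. Injectivity of $a$ is automatic, since $L_\pi$ is \emph{defined} as a sublattice of $\NS(S)$ and $a$ is the inclusion; surjectivity of $b$ is easy; and the real content is the identification $\ker b = L_\pi$. Before starting I would record the concrete description of the second factor of $b$: writing $K = \C(B)$ for the function field of the base, the homomorphism $\Pic(S_\eta) \to \MW(S,\pi) = S_\eta(K)$ sends a divisor class of degree $d$ to the unique rational point $P$ with $D \sim (d-1)[\widetilde O] + P$; in particular the degree-one class $[\widetilde Q]$ obtained by restricting a section $Q$ to $S_\eta$ is sent to the point $\widetilde Q$ itself. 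With this in hand, surjectivity of $b$ is immediate: an element of $\MW(S,\pi)$ \emph{is} a section $Q$, its image in $S$ gives a class $[Q] \in \NS(S)$, and $b([Q]) = \widetilde Q$ recovers the chosen element. (I would not need surjectivity of $\NS(S) \to \Pic(S_\eta)$, although it follows from the excision sequence for the complement of $S_\eta$.)

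For $L_\pi \subseteq \ker b$ I would just check the generators: $[E]$ and the component classes $[C_{p,i}]$ with $i \geq 1$ spanning the $R_p$ are represented by divisors supported on fibres over closed points of $B$, so restrict to $0$ on $S_\eta$, while $[O]$ restricts to $[\widetilde O]$, which maps to the identity of $\MW(S,\pi)$. The reverse inclusion is the crux. Given $[D] \in \ker b$ represented by a divisor $D$, I would set $d := \langle [D],[E]\rangle$, the degree of $D|_{S_\eta}$; since $[O]$ lies in both $\ker b$ and $L_\pi$ it is enough to treat $D - d\,O$, so I may assume $d = 0$. Then $D|_{S_\eta}$ has degree zero and maps to the identity of $S_\eta(K)$, hence is principal: $D|_{S_\eta} = \mathrm{div}_{S_\eta}(f)$ for some rational function $f$ on $S_\eta$, equivalently (as $S_\eta$ is dense in $S$) a rational function on $S$. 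Now $D$ and $\mathrm{div}_S(f)$ agree on $S_\eta$, so $V := D - \mathrm{div}_S(f)$ has every component contracted by $\pi$, i.e. $V$ is \emph{vertical}, and $[D] = [V]$ in $\NS(S)$. To finish I must see that the class of any vertical divisor lies in $L_\pi$: writing $V = \sum_p \sum_i n_{p,i} C_{p,i}$ with $C_{p,0}$ the component meeting the zero section, the classes $[C_{p,i}]$ for $i \geq 1$ lie in $R_p \subseteq L_\pi$ by definition, whereas for $C_{p,0}$ I would use that a section is smooth and meets each fibre transversally at a single smooth point, so $C_{p,0}$ has multiplicity one in $\pi^{-1}(p)$; then \eqref{equation:fibres} together with the linear equivalence of all fibres gives $[C_{p,0}] = [E] - \sum_{i\geq 1} a_{p,i}[C_{p,i}] \in L_\pi$. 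Hence $[V] \in L_\pi$ and $[D] = [V] \in L_\pi$.

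I expect the main obstacle to be not this divisor bookkeeping but the foundational set-up around the generic fibre: one must check that $b$ is genuinely well-defined on $\NS(S)$ rather than merely on $\Pic(S)$ — harmless for K3 surfaces, where $\NS(S) = \Pic(S)$, but in general requiring one to understand how algebraically trivial classes behave under restriction to $S_\eta$ — and one relies on Kodaira's classification (Thm. \ref{thm:roughfibres}) both for the shape of the reducible fibres and for the fact that the component $C_{p,0}$ met by the zero section occurs with multiplicity one in its fibre. Granting these inputs, everything else is formal manipulation with divisors and linear equivalence.
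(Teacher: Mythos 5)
The paper does not actually prove this theorem: it is quoted directly from Miranda \cite[Thm. VII.2.1]{btes}, so there is no internal argument to compare yours against. That said, your proof is correct and is essentially the standard Shioda--Tate argument given in Miranda: identify $\MW(S,\pi)$ with $S_\eta(K)$, note that $b$ kills $[E]$, $[O]$ and the fibre components, and for the converse reduce a class in $\ker b$ modulo $d[O]$ to a degree-zero class whose restriction to $S_\eta$ is principal, so that the class is represented by a vertical divisor, and then absorb vertical divisors into $L_\pi$ using that the component $C_{p,0}$ met by the zero section has multiplicity one, so $[C_{p,0}]=[E]-\sum_{i\geq 1}a_{p,i}[C_{p,i}]$. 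Two small remarks. First, your phrase ``linear equivalence of all fibres'' is slightly too strong over a base $B$ of positive genus, where two fibres are in general only algebraically equivalent; but since the theorem is stated in $\NS(S)$ (and $B=\Proj^1$ in the K3 applications of this paper), algebraic equivalence is exactly what you need, so the step stands. Second, you correctly isolate the genuine foundational point, namely that $b$ descends from $\Pic(S)$ to $\NS(S)$: for the K3 case $\Pic(S)\cong\NS(S)$ settles it, and in general one checks that algebraically trivial classes (which come from $\pi^*\Pic^0(B)$ for a non-product elliptic surface) restrict trivially to the generic fibre. With those caveats acknowledged, your write-up would serve as a complete proof of the statement the paper cites.
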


Note that the homomorphisms $a$ and $b$ depend upon the choice of section $O$, since the definition of $L_\pi$ depends upon the choice of $O$.

Therefore, to determine the N\'eron-Severi lattice of any elliptic fibration, it is enough to know both the structure of the singular fibres of $(S,\pi)$ and the Mordell-Weil group. Note that Thm. \ref{theorem:NSEF} implies that $\MW(S,\pi)$ is necessarily finitely generated.

As we have seen, the singular fibres of an elliptic surface are quite easy to determine, given an explicit Weierstrass equation for $(S,\pi)$. In general, however, it is quite difficult to compute the non-torsion part of the Mordell-Weil group (see \cite[Chap. VII]{btes}), but we can often obtain bounds on the size of $\MW(S,\pi)_{\mathrm{tors}}$, the torsion part of $\MW(S,\pi)$.

\begin{corollary}
\label{corollary:torsion}
The group $\MW(S,\pi)_{\mathrm{tors}}$ is isomorphic to $ (L_\pi\otimes \mathbb{Q} \cap \NS(S)) / L_\pi$ and hence is a subgroup of the discriminant group of $L_\pi$.
\end{corollary}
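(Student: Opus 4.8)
The plan is to read the statement straight off the exact sequence of Theorem~\ref{theorem:NSEF}, using nothing beyond elementary lattice theory. Throughout write $N := \NS(S)$ and use the isomorphism $\MW(S,\pi) \cong N/L_\pi$ coming from $b$, whose kernel is exactly $L_\pi$ embedded via $a$.

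\emph{Step 1: identify the torsion subgroup.} Since $N$ is finitely generated, so is $N/L_\pi$, and a class $D + L_\pi$ is torsion precisely when $mD \in L_\pi$ for some integer $m \geq 1$. As every element of $L_\pi \otimes \mathbb{Q}$ is a finite $\mathbb{Q}$-combination of elements of $L_\pi$, this happens if and only if $D$ lies in the saturation $\widetilde{L}_\pi := (L_\pi \otimes \mathbb{Q}) \cap N$, the intersection being taken inside $N \otimes \mathbb{Q}$. Hence the preimage under $b$ of $\MW(S,\pi)_{\mathrm{tors}}$ is exactly $\widetilde{L}_\pi$, and quotienting by $\ker b = L_\pi$ yields the asserted isomorphism $\MW(S,\pi)_{\mathrm{tors}} \cong \widetilde{L}_\pi / L_\pi$.

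\emph{Step 2: land inside the discriminant group.} First note that $L_\pi$ is nondegenerate: by Section~\ref{sect:Hpol} the span of $[O]$ and $[E]$ is a copy of the unimodular lattice $H$, and it is orthogonal to each negative definite root lattice $R_p$ (a component $C_i$ with $i \geq 1$ meets neither the zero section nor a general fibre), so $L_\pi \cong H \oplus \bigoplus_p R_p$. Thus the cup product identifies $L_\pi$ with a finite index sublattice of its dual $L_\pi^* := \mathrm{Hom}(L_\pi,\mathbb{Z})$, and the discriminant group of $L_\pi$ is $L_\pi^*/L_\pi$. Now any $D \in \widetilde{L}_\pi$ lies in $N = \NS(S)$, hence pairs integrally with every element of $L_\pi \subseteq N$; since $D$ also lies in $L_\pi \otimes \mathbb{Q}$, this exhibits $D$ as an element of $L_\pi^*$, giving inclusions $L_\pi \subseteq \widetilde{L}_\pi \subseteq L_\pi^*$. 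Therefore $\widetilde{L}_\pi/L_\pi \hookrightarrow L_\pi^*/L_\pi$, which together with Step~1 realizes $\MW(S,\pi)_{\mathrm{tors}}$ as a subgroup of the discriminant group of $L_\pi$. One may instead route Step~2 through Theorem~\ref{thm:overlattices} of the appendix, which describes the finite index overlattices of a nondegenerate lattice in exactly these terms.

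\emph{Main obstacle.} There is essentially none once Theorem~\ref{theorem:NSEF} is available: the argument is pure bookkeeping with the exact sequence and the definition of the discriminant group. The only points warranting a word of care are (i) that $\MW(S,\pi)$ is finitely generated, so that ``torsion subgroup'' behaves as expected --- but this is already recorded as a consequence of Theorem~\ref{theorem:NSEF} --- and (ii) that $L_\pi$ is nondegenerate, so that its discriminant group is defined, which is immediate from the orthogonal decomposition $L_\pi \cong H \oplus \bigoplus_p R_p$ above. I would expect the written-out proof to occupy only a few lines.
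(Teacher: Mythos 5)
Your proposal is correct and follows exactly the paper's argument: the first claim is read off from the exact sequence of Theorem~\ref{theorem:NSEF} (torsion classes in $\NS(S)/L_\pi$ are precisely those represented by elements of the saturation $(L_\pi\otimes\mathbb{Q})\cap\NS(S)$), and the second follows from the inclusion $(L_\pi\otimes\mathbb{Q})\cap\NS(S)\subseteq L_\pi^*$, which is the paper's one-line observation. Your extra remarks on finite generation and on the nondegeneracy of $L_\pi\cong H\oplus\bigoplus_p R_p$ are accurate details the paper leaves implicit.
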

\begin{proof}
The first statement follows directly from Thm. \ref{theorem:NSEF}. Note that $ (L_\pi \otimes \mathbb{Q} \cap \NS(S)) \subseteq L_\pi^*$, hence the second claim follows.
\end{proof}

\begin{corollary}
\label{corollary:Ellipticpolarization}
Let $S$ be a K3 surface and let $\pi$ be an elliptic fibration on $S$. Let $N_\pi$ be the sublattice of $\NS(S)$ generated by $L_\pi$ and $\MW(S,\pi)_{\text{tors}}$. Then $S$ is $N_\pi$-polarized.
\end{corollary}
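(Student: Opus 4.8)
The plan is to verify the two conditions of Definition~\ref{definition:lattice-polarization} for the lattice $N_\pi$ and the natural embedding $N_\pi \hookrightarrow \NS(S)$. Condition~2 is immediate: $N_\pi$ contains $L_\pi$, which contains the classes $[E]$ and $[O]$, so the argument from Sect.~\ref{sect:Hpol} shows that $2[E]+[O]$ is a pseudo-ample class lying in $L_\pi \subseteq N_\pi$. (Alternatively, since $N_\pi \supseteq L_\pi \supseteq H$ and $H$ already carries the pseudo-ample class furnished by the $H$-polarization of the elliptic fibration, condition~2 holds a fortiori.) So the real content is primitivity.

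For condition~1, I would argue that $N_\pi$ is \emph{saturated} in $\NS(S)$, i.e.\ $N_\pi = (N_\pi \otimes \mathbb{Q}) \cap \NS(S)$. First observe that $N_\pi$ and $L_\pi$ span the same $\mathbb{Q}$-vector space, since by Corollary~\ref{corollary:torsion} the quotient $(L_\pi \otimes \mathbb{Q} \cap \NS(S))/L_\pi \cong \MW(S,\pi)_{\mathrm{tors}}$ is a finite group, and $N_\pi$ is generated by $L_\pi$ together with (lifts of generators of) this finite group. Hence it suffices to show that $N_\pi$ already contains $(L_\pi \otimes \mathbb{Q}) \cap \NS(S)$. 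But that intersection is precisely the subgroup of $\NS(S)$ mapping under $b$ (from Theorem~\ref{theorem:NSEF}) into the torsion of $\MW(S,\pi)$: indeed $b$ has kernel $L_\pi$, so a class $v \in \NS(S)$ lies in $(L_\pi \otimes \mathbb{Q}) \cap \NS(S)$ if and only if some nonzero multiple $m v$ lies in $L_\pi = \ker b$, i.e.\ $b(v)$ is torsion in $\MW(S,\pi)$. By the very definition of $N_\pi$ (the preimage under $b$ of $\MW(S,\pi)_{\mathrm{tors}}$, which is generated by $L_\pi$ together with actual torsion sections), this subgroup equals $N_\pi$. Therefore $(N_\pi \otimes \mathbb{Q}) \cap \NS(S) = (L_\pi \otimes \mathbb{Q}) \cap \NS(S) = N_\pi$, so $N_\pi$ is primitively embedded.

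Finally, $N_\pi$ is nondegenerate: it contains the copy of $H$ from Sect.~\ref{sect:Hpol}, so it has signature $(1, n-1)$ with $n = \rank N_\pi$ by the Signature Theorem, exactly as required of a lattice polarization; and $S$ is projective by Rmk.~\ref{remark:projective} (or directly, being elliptically fibred). Assembling these pieces gives that $(S, N_\pi \hookrightarrow \NS(S))$ is a pseudo-ample $N_\pi$-polarized K3 surface.

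The step I expect to require the most care is the identification $(L_\pi \otimes \mathbb{Q}) \cap \NS(S) = N_\pi$, i.e.\ checking that passing from $L_\pi$ to $N_\pi$ exactly saturates and does not overshoot: one must use that $b$ restricted to $N_\pi$ surjects onto $\MW(S,\pi)_{\mathrm{tors}}$ \emph{and} that every class of $\NS(S)$ whose image in $\MW$ is torsion is already accounted for. This is really just bookkeeping with the exact sequence of Theorem~\ref{theorem:NSEF} together with Corollary~\ref{corollary:torsion}, but it is where the argument lives.
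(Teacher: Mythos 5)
Your proposal is correct and follows essentially the same route as the paper: condition 2 via the pseudo-ample class $2[E]+[O]$ coming from $H \subseteq L_\pi$, and primitivity via the exact sequence of Thm.~\ref{theorem:NSEF}, using that $N_\pi$ is exactly the preimage of $\MW(S,\pi)_{\mathrm{tors}}$ under $b$. Your saturation argument, $(N_\pi\otimes\mathbb{Q})\cap\NS(S)=N_\pi$, is just an equivalent rephrasing of the paper's observation that $\NS(S)/N_\pi\cong\MW(S,\pi)_{\mathrm{free}}$ is torsion-free.
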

\begin{proof}
This follows directly from Thm. \ref{theorem:NSEF}. Since $\mathbb{Z}[O] \oplus \mathbb{Z}[E] \cong H$ is contained in $L_\pi$, the discussion in Sect. \ref{sect:Hpol} shows that $N_\pi$ contains a pseudo-ample class, so condition 2 of Def. \ref{definition:lattice-polarization} is satisfied. To see that condition 1 is also satisfied, we need to show that $N_\pi$ is a primitive sublattice of $\NS(S)$. But $\NS(S)/L_\pi \cong \MW(S,\pi)$ and hence, by construction, $\NS(S)/N_\pi \cong \MW(S,\pi)_{\mathrm{free}}$, where $\MW(S,\pi)_{\mathrm{free}}$ denotes the torsion-free part of $\MW(S,\pi)$. Thus $N_\pi$ is primitively embedded in $\NS(S)$. 
\end{proof}

\subsection{Examples}

Now we will bring all of this theory together to examine some examples of K3 surfaces which are elliptically fibred, explaining how to use the techniques in the previous sections to compute lattice polarizations on them.

\begin{example}[Polarization by a lattice of rank 18]
Let us take the elliptically fibred K3 surface $(S,\pi)$ obtained as the minimal resolution of the Weierstrass fibration $(\overline{S},\pi)$ given by
\[Y^2Z  = X^3 + s^4t^4 XZ^2 + s^5t^5(as^2 + bst + ct^2)Z^3,\]
where $a,b,c$ are parameters chosen generically in $\mathbb{C}$. According to Thm. \ref{theorem:Tate}, $(S,\pi)$ has two singular fibres of type $II^*$ located at $[s:t] = [0:1]$ and $[1:0]$. One may check that for a general choice of $a,b,c$, the discriminant $\Delta$ vanishes simply at four other points in $\mathbb{P}^1$, giving four further singular fibres of type $I_1$. Using Thm. \ref{thm:euler} and Table \ref{table:singfib}, we see that the Euler characteristic of this surface is $2(10) + 4(1) = 24$, as expected.

For a singular fibre of type $II^*$, the root lattice $R_p$ is isomorphic to $(-E_8)$. Therefore, the lattice $L_\pi$ for this K3 surface is isomorphic to $H \oplus (-E_8) \oplus (-E_8)$ (recall that singular fibres of type $I_1$ are irreducible, so do not contribute to $L_{\pi}$). Since $L_\pi$ is unimodular, Cor. \ref{corollary:torsion} shows that $\MW(S,\pi)_{\mathrm{tors}}$ is trivial, so Cor. \ref{corollary:Ellipticpolarization} shows that $S$ is in fact $H \oplus (-E_8) \oplus (-E_8)$ polarized. This example is explored in great detail in the papers \cite{milpk3s} and \cite{desk3sni}.
\end{example}

\begin{example}[Polarization by a lattice of rank 19]
Now take the elliptically fibred K3 surface $(S,\pi)$ obtained as the minimal resolution of the Weierstrass fibration $(\overline{S},\pi)$ given by
\begin{eqnarray*}Y^2 & = &\ X^3  + \frac{1}{3}t^{3}s^3  (48 lt^{2} + 96 l ts -  t + 48 l s^2 )XZ^2- \\
 & & - \frac{2}{27}t^{5} s^5 (72 t^{2} l + 144 lts  -  ts + 72ls^2 )Z^3,\end{eqnarray*}
where $l$ is a generic parameter in $\mathbb{C}$. Its discriminant is given by
\[\Delta(s,t) = 256 l^{2}  (t + 1)^{4} s^9 t^{9} (64 l t^{2}  + 128 lts  -  ts + 64 ls^2).\]
By Thm. \ref{theorem:Tate}, for generic $l$, this K3 surface has two singular fibres of type $III^*$ occurring at $[s:t] = [0:1]$ and $[1:0]$, one singular fibre of type $I_4$ at $[1:-1]$, and two $I_1$'s occurring at the zeros of $64 l t^{2}  + 128 lts  -  ts + 64 ls^2$. 

We thus find that $L_\pi = H \oplus (-E_7) \oplus (-E_7) \oplus (-A_3)$.  By Ex. \ref{example:HE7E7} in the appendix, the lattice $L_\pi$ has a unique overlattice of index $2$ isomorphic to $H \oplus (-E_8) \oplus (-E_8) \oplus \langle -4\rangle$. Therefore, by Cor. \ref{corollary:torsion}, the only possible torsion in $\MW(S,\pi)$ is of order two and, if $\MW(S,\pi)_{\text{tors}}$ has order two, then Cor. \ref{corollary:Ellipticpolarization} shows that $S$ is $H \oplus (-E_8) \oplus (-E_8) \oplus \langle -4\rangle$-polarized. 

In fact, one can check that $X = t^2s^2/3$ is a solution to the right hand side of the Weierstrass equation for $(\overline{S},\pi)$. Thus 
\[[s:t] \mapsto [X:Y:Z] = \left[\frac{t^2s^2}{3} : 0 : 1 \right]\]
is a section of $\pi$. Since $Y=0$, we see easily that this is actually an order $2$ torsion section of $\pi$ on each fibre, hence it has order two in $\MW(S,\pi)$. This shows that $S$ is $H \oplus (-E_8) \oplus (-E_8) \oplus \langle -4\rangle$-polarized.
\end{example}

\section{Ample and K\"ahler Cones}

In the final section of these notes we will discuss the ample and K\"ahler cones of a K3 surface. These are important objects: among other things, their geometry controls fibration structures and automorphisms on the K3 surface. Furthermore, we will find that the description of these cones essentially reduces to lattice theory. This should not come as a surprise: after all, we have already seen that the Torelli Theorems reduce the theory of moduli of lattice polarized K3 surfaces to essentially lattice theoretic considerations, so it does not seem unreasonable to expect that the birational geometry of K3 surfaces might also be lattice theoretic in nature. 

\subsection{The Ample Cone}\label{sect:amplecone}

Begin by letting $S$ be any smooth projective complex surface. Then we have:

\begin{definition} The \emph{ample cone} of $S$ is the set $\mathrm{Amp}(S) \subset \NS(S) \otimes \R$ consisting of finite sums $\sum a_iu_i$, with $u_i \in \NS(S)$ ample and $a_i \in \R_{>0}$.
\end{definition}

By the Nakai-Moishezon ampleness criterion, a class $u$ in $\NS(S) \otimes \mathbb{R}$ is in $\Amp(S)$ if and only if it satisfies $\langle u,u\rangle > 0$ and $\langle u,[C] \rangle >0$ for every irreducible curve $C$ on $S$. In the case where $S$ is a K3 surface, we will see that it suffices to check that $\langle u, [C] \rangle > 0 $ for every smooth rational curve $C$ on $S$. 

To state this formally, we introduce some notation. The set
\[\NS(S)^+ = \{ u \in \NS(S) \otimes \mathbb{R} \mid \langle u,u\rangle > 0 \}\]
consists of two disjoint connected cones. All of the ample classes in $\NS(S)$ belong to one of them, the \emph{positive cone}, which we denote by $\calC_S$.  The ample cone may then be described by the following theorem, which is an easy consequence of the Nakai-Moishezon criterion.

\begin{theorem}
\label{theorem:Kahler}
Let $S$ be a projective K3 surface and let $\Delta^+(S)$ be the set of classes in $\NS(S)$ which are represented by smooth rational curves on $S$. Then the ample cone of $S$ is given by the intersection between the positive cone $\calC_S$ and the set
\[\{u \in \NS(S) \otimes \mathbb{R} \mid \langle u,\delta \rangle > 0\  \mathrm{for}\  \mathrm{all}\ \delta \in \Delta^+(S) \}.\]
\end{theorem}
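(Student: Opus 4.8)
The plan is to deduce Theorem~\ref{theorem:Kahler} from the Nakai--Moishezon criterion by showing that, for a K3 surface, the only irreducible curves that can have non-positive intersection with a class $u \in \calC_S$ are smooth rational curves. First I would recall that by Nakai--Moishezon a class $u \in \NS(S)\otimes\R$ is ample if and only if $\langle u,u\rangle > 0$ and $\langle u, [C]\rangle > 0$ for every irreducible curve $C$; extending to $\R$-divisors and using density of $\NS(S)$ in $\NS(S)\otimes\R$, the same description holds for classes in the ample cone. So the inclusion $\Amp(S) \subseteq \calC_S \cap \{u : \langle u,\delta\rangle > 0 \text{ for all } \delta \in \Delta^+(S)\}$ is immediate, since $\Amp(S) \subseteq \NS(S)^+$ and all ample classes lie in the same component $\calC_S$, while smooth rational curves are in particular irreducible curves.

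For the reverse inclusion, suppose $u \in \calC_S$ satisfies $\langle u,\delta\rangle > 0$ for every $\delta \in \Delta^+(S)$; I must show $\langle u, [C]\rangle > 0$ for \emph{every} irreducible curve $C$ on $S$. Here is where the K3 structure enters: by adjunction, an irreducible curve $C$ on $S$ satisfies $2p_a(C) - 2 = \langle [C],[C]\rangle$ since $\omega_S \cong \calO_S$. If $\langle [C],[C]\rangle \geq 0$, then the Hodge Index Theorem forces $\langle u, [C]\rangle > 0$ for any $u$ in the positive cone $\calC_S$ (a class of non-negative square in the closure of one component of $\NS(S)^+$ pairs strictly positively with the interior of that component, once one checks $C$ is not proportional to a class of zero pairing with $u$ --- and an effective curve cannot be anti-effective). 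The remaining case is $\langle [C],[C]\rangle < 0$, which by adjunction and $p_a(C)\geq 0$ forces $\langle [C],[C]\rangle = -2$ and $p_a(C) = 0$, so $C$ is a smooth rational curve and $[C] \in \Delta^+(S)$; then $\langle u,[C]\rangle > 0$ by hypothesis.

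The main obstacle I anticipate is the careful handling of the positive-cone positivity argument in the case $\langle [C],[C]\rangle \geq 0$: one needs the precise statement that for $u$ in the open positive cone $\calC_S$ and $v$ a nonzero class with $\langle v,v\rangle \geq 0$ lying in the closure of $\NS(S)^+$, one has $\langle u,v\rangle \neq 0$, and then a sign/continuity argument (for instance, using that the effective class $[C]$ pairs positively with some ample class, hence lies in the closure of $\calC_S$ rather than $-\calC_S$) to pin down the sign as strictly positive. This is a standard Hodge-index-theorem manipulation but requires attention to which component of the positive cone everything lives in. Once this lemma is in hand, the theorem follows by the case division on $\langle [C],[C]\rangle$ above. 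I would also note in passing that the set $\Delta^+(S)$ is locally finite in $\calC_S$, which guarantees the described region is genuinely an open cone, though this is not strictly needed for the statement as phrased.
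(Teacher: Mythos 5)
Your proposal is correct and follows essentially the same route as the paper: apply Nakai--Moishezon, split into the cases $\langle [C],[C]\rangle \geq 0$ (handled by the positive-cone/Hodge-index fact, which the paper cites as \cite[Cor. IV.7.2]{bpv} and you prove inline) and $\langle [C],[C]\rangle < 0$ (where the genus formula/adjunction with $\omega_S \cong \calO_S$ identifies $C$ as a smooth rational curve). The only difference is presentational — you spell out the light-cone lemma rather than citing it — so no substantive divergence or gap.
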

\begin{proof} By the Nakai-Moishezon criterion, $u \in \Amp(S)$ if and only if $\langle u, u \rangle > 0$ and $\langle u,[C] \rangle > 0$ for every irreducible curve $C$ on $S$. 

Suppose first that $C$ satisfies $\langle [C],[C] \rangle \geq 0$. Then $\langle h, [C] \rangle > 0$ for any ample class $h$ in $\NS(S)$ so, by \cite[Cor. IV.7.2]{bpv}, we must have $[C] \in \overline{\calC}_S$ (the closure of $\calC_S$). Applying \cite[Cor. IV.7.2]{bpv} again, we find that $\langle u, u \rangle > 0$ and $\langle u,[C] \rangle > 0$ if and only if $u \in \calC_S$.

Thus $u \in \Amp(S)$ if and only if $u \in \calC_S$ and $\langle u,[C] \rangle > 0$ for every irreducible curve $C$ on $S$ with $\langle [C],[C] \rangle <0$. But, by the genus formula \cite[Ex. V.1.3]{hart}, the irreducible curves $C$ on $S$ with $\langle [C],[C] \rangle <0$ are precisely the smooth rational curves on $S$.
\end{proof}

Next, we define three special subgroups of the orthogonal group $\mathrm{O}(\NS(S))$. To define the first, note that any isometry in $\mathrm{O}(\NS(S))$ must either preserve or exchange the two components of $\NS(S)^+$. Let $\mathrm{O}^+(\NS(S))$ denote the subgroup of isometries that preserve them.

To define the second, suppose that $\delta$ is any element of $\NS(S)$ with $\langle \delta,\delta \rangle = -2$. We can define an isometry of $\NS(S)$ by $u \mapsto u + \langle u, \delta\rangle \delta$. Such an isometry is called a \emph{Picard-Lefschetz reflection}. The subgroup of $\mathrm{O}(\NS(S))$ generated by all Picard-Lefschetz reflections is called the \emph{Weyl group} of the lattice $\NS(S)$ and is denoted $W_S$. It is easy to see that Picard-Lefschetz reflections preserve the positive cone $\calC_S$, so $W_S$ is a subgroup of $\mathrm{O}^+(\NS(S))$.  

In fact more is true. One may show (see \cite[Sect. 4.2]{iqfaag}) that $W_S$ is a normal subgroup of $\mathrm{O}^+(\NS(S))$ and that there is a third group $G_S$ giving a semidirect product decomposition
\[\mathrm{O}^+(\NS(S)) \cong W_S \rtimes G_S.\]
Furthermore, the discussion in  \cite[Sect. 4.2]{iqfaag} shows that the closure of the cone $\Amp(S)$ is a fundamental domain for the action of $W_S$ on the positive cone $\calC_S$, so $G_S$ should be thought of as the group of symmetries of the ample cone of $S$.

\begin{example}[Elliptically fibred K3 surface]
Let $S$ be a K3 surface with $\NS(S) \cong H$ (recall that, by Sect. \ref{sect:Hpol}, this implies that $S$ is projective and admits an elliptic fibration). We will take a basis of $\NS(S)$ given by $[E]$ and $[O]$, the classes of a fibre and the section of the elliptic fibration $S \rightarrow \mathbb{P}^1$ respectively.

We may calculate easily that there are only two classes $\delta$ in $H$ with $\langle \delta, \delta \rangle =-2$, given by $\delta = \pm [O]$. Clearly $-[O]$ is not the class of an effective divisor, so the only smooth rational curve on $S$ is $O$ itself. The set $\NS(S)^+$ is given by 
\[\langle a[E] + b[O],a[E]+b[O]\rangle = 2ab - 2b^2 > 0\]
and the class $3[E] + [O]$ is ample (by Nakai-Moishezon), so the positive cone $\calC_S$ is determined by the conditions $b > 0$ and $a > b$. To find $\Amp(S)$, we also require the condition that 
\[ \langle a[E]+b[O], [O] \rangle = a-2b > 0.\]
Therefore, we conclude that $\Amp(S)$ is given by the open cone in $\mathbb{R}^2$ defined by $b > 0$ and $a > 2b$. The ample and positive cones for this example are displayed in Fig. \ref{ampleconefig1}; in this figure the positive cone $\calC_S$ is the entire shaded area, whilst the darker shaded subset of $\calC_S$ is the ample cone $\Amp(S)$.


\begin{figure}[t]
\begin{tikzpicture}
\draw [lightgray, fill=lightgray] (0,0)--(4.3,2.15)--(4.3,4.3)--(0,0);
\draw [gray, fill=gray] (0,0)--(4.3,2.15)--(4.3,0)--(0,0);
\draw [->] (0,0)--(0,4.5);
\draw [->] (0,0)--(4.5,0);
\draw [->] (0,0)--(4.5,4.5);
\draw [->] (0,0)--(4.5,2.25);
\draw [fill] (0,0) circle [radius=0.05]; 
\draw [fill] (0,1) circle [radius=0.05]; 
\draw [fill] (0,2) circle [radius=0.05]; 
\draw [fill] (0,3) circle [radius=0.05]; 
\draw [fill] (0,4) circle [radius=0.05]; 
\draw [fill] (1,0) circle [radius=0.05]; 
\draw [fill] (1,1) circle [radius=0.05]; 
\draw [fill] (1,2) circle [radius=0.05]; 
\draw [fill] (1,3) circle [radius=0.05]; 
\draw [fill] (1,4) circle [radius=0.05]; 
\draw [fill] (2,0) circle [radius=0.05]; 
\draw [fill] (2,1) circle [radius=0.05]; 
\draw [fill] (2,2) circle [radius=0.05]; 
\draw [fill] (2,3) circle [radius=0.05]; 
\draw [fill] (2,4) circle [radius=0.05]; 
\draw [fill] (3,0) circle [radius=0.05]; 
\draw [fill] (3,1) circle [radius=0.05]; 
\draw [fill] (3,2) circle [radius=0.05]; 
\draw [fill] (3,3) circle [radius=0.05]; 
\draw [fill] (3,4) circle [radius=0.05]; 
\draw [fill] (4,0) circle [radius=0.05]; 
\draw [fill] (4,1) circle [radius=0.05]; 
\draw [fill] (4,2) circle [radius=0.05]; 
\draw [fill] (4,3) circle [radius=0.05]; 
\draw [fill] (4,4) circle [radius=0.05]; 
\node [right] at (0,4.5) {$[O]$};
\node [right] at (4.5,0) {$[E]$};
\node [right] at (4.5,2.25) {$2[E]+[O]$};
\node [right] at (4.5,4.5) {$[E] + [O]$};
\node [below] at (3.2,0.8) {$\Amp(S)$};
\node [above] at (3.2,2.1) {$\calC_S$};
\end{tikzpicture}
\caption{Ample and positive cones for a K3 surface with $\NS(S) \cong H$. The positive cone $\calC_S$ is the entire shaded area, whilst the darker shaded subset of $\calC_S$ is the ample cone $\Amp(S)$}
\label{ampleconefig1}
\end{figure}

Note that the closure of the cone $\Amp(S)$ is rational polyhedral, hence $G_S$ must be finite. We will see in Sect. \ref{section:automorphisms} that this means $S$ has a finite group of automorphisms.
\end{example}

\begin{example}[Nodal quartics]
\label{example:P1P2cone}
Let $S$ be the minimal resolution of a nodal quartic in $\mathbb{P}^3$ as discussed in Ex. \ref{example:nodalquartic}. In that example we found that $S$ is lattice polarized by the lattice $L$ with Gram matrix
\[\left(\begin{matrix} -2 & 0 \\ 0 & 4 \end{matrix} \right);\]
suppose now that $\NS(S) \cong L$.

As in Ex. \ref{example:nodalquartic}, let $H$ denote a divisor on $S$ with $H^2 = 4$ induced by a general hyperplane section in $\Proj^3$ and let $C$ denote the exceptional $(-2)$-curve arising from the blow-up of the node. Then $\NS(S)$ is generated by the classes $[H]$ and $[C]$ by assumption, so the set $\NS(S)^+$ is given by the condition
\[ \langle a[C] + b[H], a[C] + b[H] \rangle = -2a^2 + 4b^2 > 0. \] 
Since the class $[H] - [C]$ is ample, the positive cone $\calC_S$ is given by the inequalities $b > 0$ and $-\sqrt{2}b <  a < \sqrt{2}b$.

In order to compute the ample cone, we have to find the smooth rational curves on $S$. The divisor $C$ is one such curve. Another may be computed as follows. Note that projection away from the node in $\Proj^3$ induces a double covering map $S \to \Proj^2$, which maps $C$ isomorphically to a conic in $\Proj^2$. The involution exchanging the sheets of this cover maps $C$ to another smooth rational curve $C'$. As the pull-back of a line in $\Proj^2$ under this double cover is a divisor in the linear system $|H-C|$ on $S$, it is not difficult to show that $C'$ lies in the linear system $|2H-3C|$.

We claim that $C$ and $C'$ determine the ample cone of $S$. Consider the class  
\begin{align*}u_{\epsilon} &:= (1 + \epsilon)\left([H]-[C]\right) + \frac{1}{2}\left\langle [H]-[C],[C]\right\rangle [C] \\
&= (1 + \epsilon) [H] -\epsilon [C],\end{align*}
where $\epsilon \geq 0$ is a real number. Now $u_{\epsilon}$ satisfies $\langle u_{\epsilon} ,[C] \rangle = 2\epsilon$ and, for any other class $\delta \in \Delta^+(S)$, we have  $\langle u_{\epsilon} ,\delta \rangle \geq (1+\epsilon) \langle [H]-[C],\delta \rangle > 0$, since $[H]-[C]$ is ample and $\langle[C],\delta\rangle \geq 0$. So, by Thm. \ref{theorem:Kahler}, $u_{\epsilon} \in \Amp(S)$ for all $\epsilon > 0$, but $u_{0} = [H] \notin \Amp(S)$ as $\langle [H] ,[C] \rangle = 0$. Thus the class $[H]$ lies in the boundary of the ample cone. By a similar argument, one can show that the class
\begin{align*}u_0' &:= \left([H]-[C]\right) + \frac{1}{2}\left\langle [H]-[C],[C']\right\rangle [C']\\
& = 3[H] - 4[C] \end{align*}
also lies in the boundary of $\Amp(S)$. But $\rank \NS(S) = 2$ in this example, so the ample cone is $2$-dimensional and thus has only two boundary rays. 

The ample and positive cones for this example are shown in Fig. \ref{ampleconefig2}; as before, in this figure the positive cone $\calC_S$ is the entire shaded area, whilst the darker shaded subset of $\calC_S$ is the ample cone $\Amp(S)$. 


\begin{figure}[t]
\begin{tikzpicture}
\draw [lightgray, fill=lightgray] (0,0)--(3.3,2.33)--(3.3,3.3)--(-3.3,3.3)--(-3.3,2.33)--(0,0);
\draw [gray, fill=gray] (0,0)--(-3.3,2.475)--(-3.3,3.3)--(0,3.3)--(0,0);
\draw [->] (0,0)--(3.5,0);
\draw [->] (0,0)--(-3.5,0);
\draw [->] (0,0)--(0,3.5);
\draw [->] (0,0)--(3.5,2.47);
\draw [->] (0,0)--(-3.5,2.47);
\draw [->] (0,0)--(-3.5,2.625);
\draw [fill] (0,0) circle [radius=0.05]; 
\draw [fill] (0,1) circle [radius=0.05]; 
\draw [fill] (0,2) circle [radius=0.05]; 
\draw [fill] (0,3) circle [radius=0.05];  
\draw [fill] (1,0) circle [radius=0.05]; 
\draw [fill] (1,1) circle [radius=0.05]; 
\draw [fill] (1,2) circle [radius=0.05]; 
\draw [fill] (1,3) circle [radius=0.05]; 
\draw [fill] (2,0) circle [radius=0.05]; 
\draw [fill] (2,1) circle [radius=0.05]; 
\draw [fill] (2,2) circle [radius=0.05]; 
\draw [fill] (2,3) circle [radius=0.05];  
\draw [fill] (3,0) circle [radius=0.05]; 
\draw [fill] (3,1) circle [radius=0.05]; 
\draw [fill] (3,2) circle [radius=0.05]; 
\draw [fill] (3,3) circle [radius=0.05];  
\draw [fill] (-1,0) circle [radius=0.05]; 
\draw [fill] (-1,1) circle [radius=0.05]; 
\draw [fill] (-1,2) circle [radius=0.05]; 
\draw [fill] (-1,3) circle [radius=0.05];  
\draw [fill] (-2,0) circle [radius=0.05]; 
\draw [fill] (-2,1) circle [radius=0.05]; 
\draw [fill] (-2,2) circle [radius=0.05]; 
\draw [fill] (-2,3) circle [radius=0.05]; 
\draw [fill] (-3,0) circle [radius=0.05]; 
\draw [fill] (-3,1) circle [radius=0.05]; 
\draw [fill] (-3,2) circle [radius=0.05]; 
\draw [fill] (-3,3) circle [radius=0.05];  
\node [right] at (0,3.5) {$[H]$};
\node [right] at (3.5,0) {$[C]$};
\node [left] at (-3.5,0) {$-[C]$};
\node [right] at (3.5,2.47) {$\sqrt{2}[C]+[H]$};
\node [left] at (-3.5,2.3) {$-\sqrt{2}[C]+[H]$};
\node [left] at (-3.5,2.8) {$-4[C]+3[H]$};
\node [above] at (-1.5,2.1) {$\Amp(S)$};
\node [above] at (1.5,2.1) {$\calC_S$};
\end{tikzpicture}
\caption{Ample and positive cones for the K3 surface obtained as the minimal resolution of a nodal quartic in $\Proj^3$.  The positive cone $\calC_S$ is the entire shaded area, whilst the darker shaded subset of $\calC_S$ is the ample cone $\Amp(S)$}
\label{ampleconefig2}
\end{figure}

As in the previous example, the closure of the cone $\Amp(S)$ is rational polyhedral, so $G_S$ is finite and $S$ has a finite group of automorphisms. However, the structure of the Weyl group $W_S$ in this case is more interesting. $W_S$ is an infinite group, generated by the Picard-Lefschetz reflections corresponding to the classes $[C]$ and $[C']$. Repeated iteration of these reflections takes the boundary rays $[H]$ and $-4[C]+3[H]$ of the ample cone to the rays $2a[C] + b[H]$, where $a,b \in \mathbb{Z}$, $b > 0$ are solutions to the Pell equation $b^2 - 2a^2 = 1$. As the number of iterations tends to infinity, $\frac{b}{a}$ tends to $\pm\sqrt{2}$, so the limiting rays are $\pm\sqrt{2}[C] + [H]$ as expected.
\end{example}

\begin{remark}
It can happen that the cone $\Amp(S)$ is not rational polyhedral. However, it can also be shown (see \cite[Lemma 2.4]{frak3s}) that $\Amp(S)$ always has a rational polyhedral fundamental domain under the action of $G_S$. Therefore if $\Amp(S)$ is not rational polyhedral, then $G_S$ must be infinite and results in Sect. \ref{section:automorphisms} can be used to show that the automorphism group of $S$ must also be infinite. This is discussed further in the section on the cone conjecture in Huybrechts' lecture notes \cite[Sect. 8.4]{lok3s}.
\end{remark}

\subsection{Genus One Fibrations and the Ample Cone}

One application of this material is to detect genus one fibrations on a projective K3 surface $S$. Note here that we do not assume the existence of a section, so we cannot use the results from Sect. \ref{sect:Hpol} about the existence of an $H$-polarization.

We begin by noting that, if $S$ has a genus one fibration $\pi\colon S \rightarrow B$, then the class $[E]$ of a fibre corresponds to a class in $\NS(S)$ which is in the boundary of the closure $\overline{\Amp(S)}$. To see this, first, note that $\langle [E],[E]\rangle = 0$. Furthermore, if $[C]$ is the class of an irreducible curve on $S$, then either $C$ is a curve in a fibre of $\pi$, in which case $\langle [E],[C]\rangle =0$, or $\pi|_C$ is a surjective map of curves, in which case $\langle [E], [C]\rangle = \deg \pi|_C > 0$. Thus by the Nakai-Moishezon criterion, $[E]$ lies in $\overline{\Amp(S)}$ but not in $\Amp(S)$.

In fact, the converse of this statement is also true:
\begin{theorem}\textup{\cite[Thm. 3.1]{ttastk3}} Let $S$ be a projective K3 surface and let $D$ be a class in the closure of $\Amp(S)$ that has $\langle D,D\rangle = 0$. Then there exists an $n$ such that $nD$ is the class of a fibre in a genus one fibration on $S$. 
\end{theorem}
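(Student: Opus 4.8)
The plan is to produce the fibration directly from the linear system of $D$; throughout we may assume $D\neq 0$, since otherwise there is nothing to prove. The first step is to show that $D$ moves. Since $\omega_S\cong\calO_S$ and $\langle D,D\rangle=0$, Riemann--Roch on $S$ gives $\chi(\calO_S(D))=2$. As $D$ lies in $\overline{\Amp(S)}$ it is nef, so pairing with an ample class (and using $D\neq 0$) shows that $-D$ is not linearly equivalent to an effective divisor; hence $h^2(\calO_S(D))=h^0(\calO_S(-D))=0$ by Serre duality, and therefore $h^0(\calO_S(D))\geq 2$. Thus $|D|$ is a nonempty linear system of positive dimension.

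Next I would show that $|D|$ is base-point free. Write $D=M+\Phi$, where $M$ is the moving part and $\Phi$ the fixed part. Because $|M|$ has $0$-dimensional base locus, $M$ is nef, and pairing the nef class $D$ against the effective classes $M,\Phi$ in the identity $0=\langle D,D\rangle=\langle D,M\rangle+\langle D,\Phi\rangle$ forces $\langle D,M\rangle=\langle D,\Phi\rangle=0$. Then $\langle M,M\rangle=-\langle M,\Phi\rangle\leq 0$, while $M$ nef gives $\langle M,M\rangle\geq 0$, so $\langle M,M\rangle=\langle M,\Phi\rangle=\langle\Phi,\Phi\rangle=0$. To rule out $\Phi\neq 0$ I would use the Hodge index theorem \cite[Cor.~IV.2.16]{bpv}: any fixed component $\Gamma$ has $\langle D,\Gamma\rangle=0$ (since $D$ is nef and $D-\Gamma$ is effective), and then $\langle D,D\rangle=0$ with $D\neq 0$ forces $\langle\Gamma,\Gamma\rangle<0$, so $\Gamma$ is a $(-2)$-curve; a short analysis — the content of the Saint-Donat--Mayer study of nef linear systems on K3 surfaces underlying Prop.~\ref{prop:mayer}, see also \cite{fk3s} — then contradicts $\langle\Phi,\Phi\rangle=0$. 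Isolated base points are excluded similarly: after blowing one up, the strict transform of $|D|$ would acquire negative self-intersection and hence could not be base-point free. So $|D|$ defines a morphism.

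Granting base-point-freeness, the morphism $\varphi\colon S\to\Proj^N$ attached to $|D|$ has image a curve: it is not a point since $\dim|D|\geq 1$, and not a surface, since otherwise $\langle D,D\rangle$ would be a positive multiple of the degree of $\varphi(S)$ in $\Proj^N$. Take the Stein factorization $S\xrightarrow{g}C\xrightarrow{j}\varphi(S)$: then $g$ has connected fibres and $g_*\calO_S=\calO_C$, so $H^1(C,\calO_C)\hookrightarrow H^1(S,\calO_S)=0$ and $C\cong\Proj^1$. A general fibre $F$ of $g$ is a smooth curve with $\langle F,F\rangle=0$, and adjunction on $S$, using $\omega_S\cong\calO_S$, gives $F$ arithmetic genus $1$; hence $g\colon S\to\Proj^1$ is a genus one fibration. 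Finally $\calO_S(D)=g^*j^*\calO_{\Proj^N}(1)=g^*\calO_{\Proj^1}(d)$ with $d=\deg j^*\calO(1)\geq 1$, so $D=d\,[F]$ is a positive integral multiple of the fibre class of $g$; in particular, when $D$ is primitive $D$ is itself the class of a fibre, which (with a suitable $n$ in general) is the assertion.

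The routine parts are the Riemann--Roch computation and, once base-point-freeness is in hand, the Stein-factorization argument. \textbf{The main obstacle is establishing base-point-freeness of $|D|$}: one must control the fixed and base loci of a nef isotropic linear system on a K3 surface. This is precisely the Saint-Donat--Mayer analysis, and one either invokes it as a black box or reconstructs it from the Hodge index theorem, the genus formula, and Riemann--Roch estimates applied to the relevant sub-linear-systems.
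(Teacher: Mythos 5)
The paper itself does not prove this statement; it is quoted directly from \cite{ttastk3}, so the only comparison available is with the classical argument, whose outline you follow: Riemann--Roch to get $h^0(\calO_S(D))\geq 2$, elimination of the fixed part and base locus, Stein factorization, and adjunction. Your Riemann--Roch step, the orthogonality computations $\langle D,M\rangle=\langle D,\Phi\rangle=\langle M,M\rangle=\langle M,\Phi\rangle=\langle\Phi,\Phi\rangle=0$, the blow-up argument against isolated base points, and the endgame (image a curve, $C\cong\Proj^1$ because $H^1(S,\calO_S)=0$, fibres of arithmetic genus one, $D=d[F]$) are all correct, and your reading of the conclusion --- that $D$ is a positive integer multiple of the (primitive) fibre class --- is the right content of the loosely worded statement, consistent with how the paper uses it in the corollary that follows.

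The genuine gap is in the step you yourself flag as the crux, and the sketch you give for it would fail as written, in two places. First, for a fixed component $\Gamma$ with $\langle D,\Gamma\rangle=0$, the Hodge index theorem only gives $\langle\Gamma,\Gamma\rangle\leq 0$, with equality exactly when $\Gamma$ is numerically proportional to $D$; that borderline case is not addressed. Second, even granting that every fixed component is a $(-2)$-curve, this does not by itself contradict $\langle\Phi,\Phi\rangle=0$: effective sums of $(-2)$-curves of square zero exist in abundance (the reducible fibres of an elliptic K3 are exactly such configurations), so no contradiction arises without further input. Fortunately your own computation already contains a complete and much cheaper fix, making the appeal to the Saint-Donat--Mayer analysis unnecessary: you have shown $\langle\Phi,\Phi\rangle=0$, so if $\Phi\neq 0$ then Riemann--Roch together with $h^0(\calO_S(-\Phi))=0$ gives $h^0(\calO_S(\Phi))\geq 2$; since the addition map $|M|\times|\Phi|\to|D|$ is finite onto its image, $h^0(\calO_S(D))\geq h^0(\calO_S(M))+h^0(\calO_S(\Phi))-1>h^0(\calO_S(M))$, contradicting the definition of the fixed part. (The same inequality also disposes of the borderline case in the first point, since an irreducible curve of square zero moves in a pencil.) With $\Phi=0$ your blow-up argument and Stein factorization finish the proof. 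Alternatively, the standard route (see e.g. Huybrechts' notes \cite{lok3s}) sidesteps $|D|$ altogether by producing an irreducible curve $E$ of square zero with $D$ a multiple of $[E]$, and noting that $|E|$ is automatically a free pencil because two distinct members meet in $\langle E,E\rangle=0$ points.
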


Now let $D$ be \emph{any} nonzero class in $\NS(S)$ with $\left<D,D \right> = 0$. Then, possibly after negating, we may assume that $D$ is in the closure of the positive cone $\calC_S$. Since $\Amp(S)$ is a fundamental domain for the action of $W_S$ on $\calC_S$, there must be some $\gamma \in W_S$ such that $\gamma(D)$ is in $\overline{\Amp(S)}$. Thus $\gamma(D)$ is the class of a fibre in a genus one fibration. Thus we have:

\begin{corollary}\textup{\cite[Cor. 3.3]{ttastk3}} A projective K3 surface $S$ admits a genus one fibration if and only if the lattice $\NS(S)$ admits a nonzero element $u$ with $\langle u, u \rangle = 0$.\end{corollary}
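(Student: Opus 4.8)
The statement is an ``if and only if,'' and the two directions have rather different flavours; I would dispatch the easy one first. \emph{The forward implication} is already essentially contained in the discussion opening this subsection: if $\pi\colon S\to B$ is a genus one fibration, then the class $[E]\in\NS(S)$ of a fibre is nonzero (it has positive intersection with any ample class) and satisfies $\langle[E],[E]\rangle=0$, since distinct fibres are disjoint. So $u=[E]$ is the required isotropic class, and nothing further is needed.

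\emph{For the converse}, suppose $u\in\NS(S)$ is nonzero with $\langle u,u\rangle=0$. Since $S$ is projective, $\NS(S)$ contains an ample class of positive square, so by the Hodge Index Theorem \cite[Cor. IV.2.16]{bpv} the lattice $\NS(S)$ has signature $(1,\rho-1)$ where $\rho=\rank\NS(S)$; thus $\NS(S)\otimes\R$ carries the two-component positive cone of Sect. \ref{sect:amplecone}, and the nonzero isotropic vector $u$ lies in the closure of one of its two halves. Replacing $u$ by $-u$ if necessary, we may assume $u\in\overline{\calC_S}$. The next step is to move $u$ into the closure of the ample cone: using that $\overline{\Amp(S)}$ is a fundamental domain for the action of the Weyl group $W_S$ on $\calC_S$ (Sect. \ref{sect:amplecone}), there is some $\gamma\in W_S$ with $\gamma(u)\in\overline{\Amp(S)}$. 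Since $\gamma$ is a lattice isometry, $\gamma(u)$ is again a nonzero class in $\NS(S)$ with $\langle\gamma(u),\gamma(u)\rangle=0$. Finally, I would apply \cite[Thm. 3.1]{ttastk3} with $D=\gamma(u)$: this produces an integer $n$ for which $n\,\gamma(u)$ is the class of a fibre of a genus one fibration on $S$. In particular $S$ admits a genus one fibration, which completes the proof.

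The step I expect to need the most care is the passage from $u\in\overline{\calC_S}$ to a $W_S$-translate lying in $\overline{\Amp(S)}$. The fundamental-domain statement is most naturally formulated for the \emph{open} positive cone $\calC_S$, whereas $u$, being isotropic, lies only on its boundary, so one cannot apply it verbatim. What rescues the argument is that $u$ spans a \emph{rational} ray: one may either invoke the structure of the Tits cone of the hyperbolic reflection group $W_S$ (which contains all rational isotropic rays of $\overline{\calC_S}$ and on which $\overline{\Amp(S)}$ is a fundamental domain), or argue by a limiting procedure, perturbing $u$ to $u+\epsilon h$ with $h$ ample — which lies in $\calC_S$ for $\epsilon>0$ small since $\langle u,h\rangle>0$ — carrying it into $\overline{\Amp(S)}$, and letting $\epsilon\to 0$. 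Once this point is granted, the remainder of the argument is a direct appeal to results already established above.
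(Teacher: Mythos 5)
Your proof is correct and follows essentially the same route as the paper: the forward direction via the isotropic fibre class $[E]$, and the converse by negating $u$ into $\overline{\calC}_S$, moving it into $\overline{\Amp(S)}$ by a Weyl group element, and applying \cite[Thm. 3.1]{ttastk3}. In fact you are slightly more careful than the text, which applies the fundamental-domain property directly to the isotropic boundary class without comment, whereas you flag and resolve that point via the perturbation $u+\epsilon h$.
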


\begin{example}[Anticanonical hypersurfaces in $\mathbb{P}^1 \times \mathbb{P}^2$] \label{ex:genusone}
Look at the anticanonical K3 surfaces in $\mathbb{P}^1 \times \mathbb{P}^2$ given in Ex. \ref{example:P1P2}. Generically, such K3 surfaces have N\'eron-Severi lattice 
\[\left( \begin{matrix} 0 & 3 \\ 3 & 2 \end{matrix} \right).\]
Hence $\NS(S)$ admits an element of square 0 and thus $S$ admits a genus one fibration. One may exhibit this fibration by restricting the natural projection $\mathbb{P}^1 \times \mathbb{P}^2 \to \mathbb{P}^1$ to the surface $S$. Note that $H$ is not a sublattice of $\NS(S)$, so this \emph{cannot} be an elliptic fibration.
\end{example}

Since every indefinite lattice of rank $n \geq 5$ contains an element of square $0$ \cite[Cor. IV.3.2]{aca} we have:
\begin{corollary}
 Let $S$ be a projective K3 surface with $\rank \NS(S) \geq 5$. Then $S$ admits a genus one fibration.
\end{corollary}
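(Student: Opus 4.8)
The plan is to deduce this directly from the preceding corollary, for which the only missing ingredient is that $\NS(S)$ carries a nonzero isotropic vector. First I would record that, since $S$ is projective, $\NS(S)$ contains an ample class $h$ with $\langle h,h\rangle>0$; in particular $\NS(S)$ is nondegenerate, and by the Hodge Index Theorem (equivalently the Signature Theorem \cite[Thm. IV.2.14]{bpv}) it has signature $(1,\rho-1)$, where $\rho=\rank\NS(S)$.

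Next I would invoke the hypothesis $\rho\geq 5$. Since $\rho\geq 2$ the lattice $\NS(S)$ has both a positive and a negative part, hence is indefinite, and it has rank at least $5$; therefore \cite[Cor. IV.3.2]{aca} applies and produces a nonzero $u\in\NS(S)$ with $\langle u,u\rangle=0$.

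Finally, the preceding corollary shows that the existence of such a nonzero isotropic class in $\NS(S)$ is equivalent to $S$ admitting a genus one fibration, which completes the argument. There is essentially no obstacle here: all of the substantive work has been absorbed into the preceding corollary --- itself resting on the description of $\overline{\Amp(S)}$ as a fundamental domain for $W_S$ and on \cite[Thm. 3.1]{ttastk3} --- and into the classical lattice-theoretic fact of \cite{aca}. The one point worth stating explicitly is that projectivity together with $\rho\geq 2$ forces $\NS(S)$ to be indefinite, which is immediate from the signature computation above, so that the rank hypothesis $\rho\geq 5$ is exactly what is needed to feed the lattice result.
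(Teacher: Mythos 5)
Your argument is correct and is essentially the paper's own proof: the paper likewise combines the signature $(1,\rho-1)$ of $\NS(S)$ (hence indefiniteness), the fact that an indefinite lattice of rank at least $5$ contains a nonzero isotropic vector \cite[Cor. IV.3.2]{aca}, and the preceding corollary characterizing genus one fibrations by isotropic classes. Your only addition is spelling out explicitly why projectivity forces the signature computation, which the paper leaves implicit.
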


\subsection{Automorphisms of K3 surfaces}
\label{section:automorphisms}

A second application of this material is to the study of the automorphisms of a projective K3 surface $S$. We can produce such automorphisms using the Strong Torelli Theorem (Thm. \ref{strongtorelli}) along with some lattice theory. 

Begin by letting $f_0$ be an automorphism of $\NS(S)$. Then, by \cite[Cor. 1.5.2]{isbfa}, the automorphism
\[f_0 \oplus \Id\colon \NS(S) \oplus \T(S) \rightarrow \NS(S) \oplus \T(S)\]
extends uniquely to an automorphism $f$ of the lattice $\Lambda_{\mathrm{K3}}$ if and only if $f_0$ acts trivially on the discriminant lattice $A_{\NS(S)}$ of $\NS(S)$ (recall here that $\T(S)$ denotes the transcendental lattice of $S$, see Sect. \ref{section:hodge}). 

Assume that this is the case. By definition, the holomorphic $2$-form $\sigma \in H^{2,0}(S)$ sits inside $\T(S)$, so $f$ fixes the period point of $S$. Therefore $f$ is induced by a non-trivial automorphism of $H^2(X,\mathbb{Z})$ which preserves the period $\sigma$. 

If we further assume that $f_0$ is contained in the group $G_S$ of automorphisms of $\NS(S)$ which preserve the ample cone $\Amp(S)$, then $f$ sends some ample class of $S$ to another ample class on $S$. Thus the Strong Torelli Theorem (Thm. \ref{strongtorelli}) tells us that $f$ induces a unique isomorphism on $S$. We have:

\begin{proposition}\textup{\cite[Sect. 7]{ttastk3}}
Let $S$ be a projective K3 surface. The subgroup of $\Aut(S)$ which fixes $\T(S)$ is isomorphic to the finite index subgroup of $G_S$ that acts trivially on the discriminant lattice $A_{\NS(S)}$.
\end{proposition}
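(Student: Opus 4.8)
The plan is to set up an explicit bijection between the two groups and check it respects the group structure. The discussion immediately preceding the statement already does half the work: starting from an element $f_0 \in G_S$ that acts trivially on $A_{\NS(S)}$, Nikulin's extension theorem \cite[Cor. 1.5.2]{isbfa} furnishes a unique isometry $f$ of $H^2(S,\Z) \cong \Lambda_{\mathrm{K}3}$ restricting to $f_0$ on $\NS(S)$ and to the identity on $\T(S)$, and the Strong Torelli Theorem (Thm. \ref{strongtorelli}) then produces a unique $g \in \Aut(S)$ with $g^* = f$. Since $g^*$ is the identity on $\T(S)$, this $g$ lies in the subgroup of $\Aut(S)$ fixing $\T(S)$; call the resulting assignment $\Psi\colon G_S^{\circ} \to \Aut(S)$, where $G_S^{\circ} \leq G_S$ denotes the subgroup acting trivially on the finite group $A_{\NS(S)}$. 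This subgroup has finite index because it is the kernel of the homomorphism $G_S \to \mathrm{O}(A_{\NS(S)})$ to a finite group, which takes care of the ``finite index'' assertion.

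First I would build the candidate inverse $\Phi$. Given $f \in \Aut(S)$ with $f^*|_{\T(S)} = \Id$, set $\Phi(f) := f^*|_{\NS(S)} =: f_0$, and the point to verify is that $f_0$ actually lands in $G_S^{\circ}$. For membership in $G_S$: the map $f^*$ is a Hodge isometry preserving $\NS(S)$ and carrying ample classes to ample classes, so $f_0$ preserves the positive cone $\calC_S$ and the ample cone $\Amp(S)$; since $\overline{\Amp(S)}$ is a fundamental domain for $W_S$ acting on $\calC_S$ and $\mathrm{O}^+(\NS(S)) = W_S \rtimes G_S$, any element of $\mathrm{O}^+(\NS(S))$ stabilizing $\Amp(S)$ must lie in $G_S$. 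For triviality on $A_{\NS(S)}$: $f^*$ is a genuine isometry of the overlattice $H^2(S,\Z) \supseteq \NS(S) \oplus \T(S)$ restricting to $f_0 \oplus \Id$ there, so the ``only if'' half of \cite[Cor. 1.5.2]{isbfa} forces $f_0$ to act trivially on $A_{\NS(S)}$.

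Next I would check that $\Phi$ and $\Psi$ are mutually inverse, which is where the uniqueness clauses do the work. That $\Phi \circ \Psi = \Id$ is immediate from the construction of $\Psi$. For $\Psi \circ \Phi = \Id$: given $f \in \Aut(S)$ fixing $\T(S)$ and $f_0 = \Phi(f)$, the isometry $f^*$ restricts to $f_0 \oplus \Id$ on $\NS(S) \oplus \T(S)$, so by the uniqueness of the extension in \cite[Cor. 1.5.2]{isbfa} it equals the isometry used to define $\Psi(f_0)$, whereupon the uniqueness in the Strong Torelli Theorem identifies $\Psi(f_0)$ with $f$. Applying Strong Torelli requires a small preparatory step, since Thm. \ref{strongtorelli} is stated for polarized surfaces: fix a primitive ample class $h$ on $S$; then $h' := (f^*)^{-1}(h)$ is again primitive ample with $\langle h',h'\rangle = \langle h,h\rangle$, so $(S,h)$ and $(S,h')$ are polarized K3 surfaces of the same degree related by the Hodge isometry $f^*$, exactly the situation the theorem covers. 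Compatibility with composition — up to the usual contravariance of pullback — is routine, so $\Phi$ is the desired isomorphism of groups.

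The main obstacle I anticipate is the careful application of Nikulin's gluing criterion \cite[Cor. 1.5.2]{isbfa}: one must make sure that the discriminant-form isomorphism $A_{\NS(S)} \cong A_{\T(S)}$ coming from the overlattice $H^2(S,\Z)$ matches the action of $f_0$ on $A_{\NS(S)}$ with the action of $\Id$ on $A_{\T(S)}$, so that ``$f_0 \oplus \Id$ extends to $H^2(S,\Z)$'' is literally equivalent to ``$f_0$ acts trivially on $A_{\NS(S)}$''. Here the equivalence holds trivially because the identity corresponds to the identity under any isomorphism, but it is the hinge of the whole argument, and it explains why the target group must be precisely $G_S^{\circ}$ and not all of $G_S$. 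Everything else amounts to bookkeeping with the Torelli theorems and the semidirect-product decomposition of $\mathrm{O}^+(\NS(S))$.
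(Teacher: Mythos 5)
Your proposal is correct and takes essentially the same route as the paper, whose ``proof'' is the discussion immediately preceding the proposition: extend $f_0 \oplus \Id$ to $H^2(S,\Z)$ via Nikulin's criterion and apply the Strong Torelli Theorem. You simply make explicit the inverse map, the mutual-inverse verifications, and the finite-index observation, details the paper delegates to the cited reference.
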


Such automorphisms are called \emph{symplectic automorphisms}, since they preserve the holomorphic symplectic form $\sigma$ of $S$. We will denote the group of such automorphisms by $\Aut(S)^s$. There is an embedding $i\colon \Aut(S)^{s} \hookrightarrow \Aut(S)$.

Another important theorem which follows with minimal effort from lattice theory and the Strong Torelli Theorem is:

\begin{theorem}\textup{\cite[Sect. 7]{ttastk3}\cite[Cor. 4.2.4]{iqfaag}}
\label{theorem:autfinite}
The cokernel of the embedding $i$ is finite for all projective K3 surfaces $S$. Thus the group $\Aut(S)$ is finite if and only if $G_S$ is finite.
\end{theorem}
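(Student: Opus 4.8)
The plan is to control the quotient $\Coker(i) = \Aut(S)/\Aut(S)^{s}$ by studying how an automorphism acts on the holomorphic two-form, and then to deduce the equivalence ``$\Aut(S)$ finite $\iff$ $G_S$ finite'' formally from the preceding Proposition. First I would make the cokernel explicit: since $H^{2,0}(S) = \C\sigma$ is one-dimensional, each $g \in \Aut(S)$ acts on it as multiplication by a scalar $\chi(g) \in \C^{\ast}$, the assignment $g \mapsto \chi(g)$ is a group homomorphism $\Aut(S) \to \C^{\ast}$, and its kernel is precisely the group $\Aut(S)^{s}$ of symplectic automorphisms. Hence $\Coker(i) \cong \chi(\Aut(S)) \subseteq \C^{\ast}$, and it suffices to prove that this subgroup of $\C^{\ast}$ is finite.

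Next I would bound the elements of $\chi(\Aut(S))$ using the transcendental lattice. Fix $g \in \Aut(S)$ with $\chi(g) = \zeta$, and recall that by definition $\T(S)$ is the smallest sublattice of $H^2(S,\Z)$ whose complexification contains $\sigma$, so $g^{\ast}$ preserves $\T(S)$ and acts on $\C\sigma \subseteq \T(S)\otimes\C$ by $\zeta$. Writing $\sigma = x + iy$ with $x,y \in \T(S)\otimes\R$, the relations $\langle\sigma,\sigma\rangle = 0$ and $\langle\sigma,\overline{\sigma}\rangle > 0$ force $\langle x,x\rangle = \langle y,y\rangle > 0$ and $\langle x,y\rangle = 0$, so the real plane $P = \langle x,y\rangle_{\R}$ is positive definite; since $g^{\ast}$ is real it also scales $\overline{\sigma}$ by $\overline{\zeta}$ and hence preserves $P$. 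As $S$ is projective, $\T(S) = \NS(S)^{\perp}$ has signature $(2, \rank\T(S) - 2)$, so the ($g^{\ast}$-stable) orthogonal complement $P^{\perp}$ in $\T(S)\otimes\R$ is negative definite. Therefore $g^{\ast}|_{\T(S)\otimes\R}$ lies in the compact group $\mathrm{O}(P)\times\mathrm{O}(P^{\perp}) \cong \mathrm{O}(2)\times\mathrm{O}(\rank\T(S)-2)$, so every eigenvalue of $g^{\ast}|_{\T(S)}$ has absolute value $1$; in particular $\zeta$ and all its Galois conjugates (which appear among the eigenvalues of the integer matrix $g^{\ast}|_{\T(S)}$) have absolute value $1$, so $\zeta$ is a root of unity by Kronecker's theorem. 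Since $\rank\T(S) = 22 - \rank\NS(S) \leq 21$ we get $[\mathbb{Q}(\zeta):\mathbb{Q}] \leq 21$, hence $\varphi(\operatorname{ord}(\zeta)) \leq 21$; as only finitely many integers $m$ satisfy $\varphi(m) \leq 21$, there is an $N$ (independent of $g$ and of $S$) with $\zeta^{N} = 1$. Thus $\chi(\Aut(S)) \subseteq \mu_{N}$ is finite, so $\Coker(i)$ is finite.

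For the final statement I would argue purely group-theoretically. By the preceding Proposition, $\Aut(S)^{s}$ is isomorphic to a subgroup of finite index in $G_S$, so $\Aut(S)^{s}$ is finite if and only if $G_S$ is; and by the first part $[\Aut(S):\Aut(S)^{s}] < \infty$, so $\Aut(S)$ is finite if and only if $\Aut(S)^{s}$ is. Combining the two equivalences proves that $\Aut(S)$ is finite if and only if $G_S$ is finite. The step I expect to be the real obstacle is the middle one: the lattice $\T(S)$ is indefinite, and the point that must be extracted is that the plane carrying $\sigma$ is positive definite and $g^{\ast}$-invariant, forcing $g^{\ast}$ to act on all of $\T(S)\otimes\R$ through a product of compact orthogonal groups; once this is in hand, Kronecker's theorem together with the crude bound $\rank\T(S) \leq 21$ closes the estimate, and the reduction to $\chi(\Aut(S))$ and the bookkeeping with $G_S$ are routine.
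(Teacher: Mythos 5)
Your proof is correct, and it is essentially the argument the paper defers to its citations for (the paper itself gives no proof beyond referencing \cite{ttastk3} and \cite{iqfaag}): bound the index of the symplectic subgroup via the character $\chi$ on $H^{2,0}(S)$, using that $g^*$ preserves the positive-definite $2$-plane spanned by $\operatorname{Re}\sigma,\operatorname{Im}\sigma$ and its negative-definite complement in $\T(S)\otimes\R$, then Kronecker's theorem plus $\rank\T(S)\le 21$, and finally the formal comparison with $G_S$ through the preceding Proposition. The only step worth making explicit is that $\ker\chi$ coincides with the subgroup fixing $\T(S)$ pointwise (the group appearing in that Proposition); this follows from the same minimality property of $\T(S)$ you already invoke, since for $g\in\ker\chi$ the fixed sublattice $\T(S)^{g^*}$ has complexification containing $\sigma$ and hence equals $\T(S)$.
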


The subgroup of $\Aut(S)$ which does not fix $\T(S)$ tends to be quite small, and is called the \emph{group of non-symplectic automorphisms of $S$}. There has been much work done towards classification of finite groups of symplectic and non-symplectic automorphisms on K3 surfaces. Nikulin provided a classification of cyclic symplectic automorphisms of K3 surfaces in \cite{fagkk3s} and Mukai completed the classification of symplectic automorphism groups of K3 surfaces in \cite{fgaks3mg} (see also \cite{gcbk3s}). Non-symplectic automorphism groups of K3 surfaces were also classified by Nikulin in \cite{fagkk3s}. Recently, more work has been done towards explicitly exhibiting and classifying K3 surfaces admitting non-symplectic automorphisms; see, for example, \cite{k3snsapo}.

\begin{remark}
Finiteness of the group $G_S$ is equivalent to the index of $W_S$ being finite in $\mathrm{O}(\NS(S))$ (since $\mathrm{O}^+(\NS(S))$ has finite index in $\mathrm{O}(\NS(S))$). Lattices whose Weyl groups are of finite index in their orthogonal groups are called \emph{reflexive}. Nikulin \cite{fggahfrsg2r} \cite{stk3fgapgrt} has produced a classification of reflexive hyperbolic lattices, which in turn gives a classification of lattice polarized K3 surfaces with finite automorphism group \cite{ak3sfag}.
\end{remark}

\begin{example}[K3 surfaces with infinitely many symplectic automorphisms]
Let $L$ be a lattice of rank $2$ with Gram matrix
\[\left(\begin{matrix} 2na & nb \\ nb & 2nc \end{matrix} \right),\]
where $n > 1$ and $4ac - b^2 < 0$ is not a perfect square. Then $L$ admits no classes $\delta$ of square $(-2)$, so if a K3 surface $S$ has $\NS(S) \cong L$, then $W_S$ is trivial and $\calC_S = \Amp(S)$.  Therefore, the finite index subgroup of $\mathrm{O}^+(\NS(S))$ fixing the discriminant $A_{\NS(S)}$ is isomorphic to the group of symplectic automorphisms of $S$. This group is infinite cyclic and closely related to the group of units of a subring of $\mathbb{Q}\left(\sqrt{4ac - b^2}\right)$, we refer the interested reader to \cite{aibqfk3spn2} for details.
\end{example}

\subsection{The K\"{a}hler Cone}

Now suppose that $S$ is a smooth K\"{a}hler surface (that is not necessarily projective). Then we have:

\begin{definition} The \emph{K\"ahler cone} of $S$ is the open convex cone $\K(S)$ of all K\"ahler classes in $H^{1,1}(S,\R)$.
\end{definition}

The K\"ahler cone can be constructed in a very similar way to the ample cone on a projective surface (in fact, as we shall see, the two are very closely related). Begin by considering the set
\[\{u \in H^{1,1}(S,\R) \mid \langle u,u \rangle > 0\},\]
which is the analogue of $\NS(S)^+$ from Sect. \ref{sect:amplecone}. This set consists of two disjoint cones. All of the K\"ahler classes belong to one of them, the \emph{positive cone}, which will again be denoted $\calC_S$ (this seems like a confusing choice of terminology but, when $S$ is projective, the positive cone from Sect. \ref{sect:amplecone} is simply the intersection of $\calC_S$ with $\NS(S)$). Then we find:

\begin{theorem}
\label{theorem:Kahler2} \textup{\cite[Cor. VIII.3.9]{bpv}}
Let $S$ be a K3 surface and let $\Delta^+(S)$ be the set of classes in $\NS(S)$ which are represented by smooth rational curves on $S$. Then the K\"ahler cone of $S$ is given by the intersection between the positive cone $\calC_S$ and the set
\[\{u \in H^{1,1}(S,\R) \mid \langle u,\delta \rangle > 0\  \mathrm{for}\  \mathrm{all}\ \delta \in \Delta^+(S) \}.\]
\end{theorem}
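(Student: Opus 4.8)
The plan is to establish the two inclusions separately; the first is elementary, and the second carries all of the content. For the easy direction, if $u \in \K(S)$ is represented by a K\"ahler form $\omega$, then $\langle u,u\rangle = \int_S \omega\wedge\omega > 0$, so $u$ lies in the component of $\{v \in H^{1,1}(S,\R) : \langle v,v\rangle > 0\}$ containing the K\"ahler classes, namely $\calC_S$; and for any smooth rational curve $C \subset S$ with class $\delta \in \Delta^+(S)$ we have $\langle u,\delta\rangle = \int_C \omega > 0$, since $\omega$ restricts to a positive volume form on the complex submanifold $C$. Hence $\K(S) \subseteq \calC_S \cap \{u : \langle u,\delta\rangle > 0 \ \text{for all}\ \delta \in \Delta^+(S)\}$.

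The reverse inclusion I would approach in two stages. The first is purely lattice-theoretic and parallels the proof of Thm.~\ref{theorem:Kahler}: any effective curve on $S$ is a positive combination of irreducible curves, each of which either has negative self-intersection --- in which case the genus formula \cite[Ex.~V.1.3]{hart} forces it to be a smooth rational $(-2)$-curve --- or has non-negative self-intersection, in which case its nonzero class lies in $\overline{\calC}_S$ and so pairs strictly positively with every element of $\calC_S$ by the light-cone lemma \cite[Cor.~IV.7.2]{bpv}. Hence a class $u \in \calC_S$ with $\langle u,\delta\rangle > 0$ for all $\delta \in \Delta^+(S)$ in fact has $\langle u,\delta'\rangle > 0$ for \emph{every} effective $(-2)$-class $\delta'$; and since each $(-2)$-class in $\NS(S)$ is, up to sign, effective (by Riemann--Roch on a K3), the right-hand set is exactly one of the chambers cut out in $\calC_S$ by the locally finite family of hyperplanes $\delta^{\perp}$, $\delta \in \NS(S)$ with $\langle\delta,\delta\rangle = -2$. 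Since $\K(S)$ is an open convex (hence connected) cone missing every such hyperplane, it lies inside a single chamber, and by the positivity established above that chamber is precisely the right-hand set. So it remains only to prove that this chamber is \emph{entirely} K\"ahler.

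This last step is where I expect the real obstacle: closing the gap between ``$u$ has positive square and is numerically positive on every curve'' and ``$u$ is K\"ahler''. In the projective case this is free from the Nakai--Moishezon criterion (as in Thm.~\ref{theorem:Kahler}), but here $S$ need not be projective --- for small $\rank \NS(S)$ there may be no ample class at all --- so genuinely transcendental input is needed. I would supply it by deformation: using the Surjectivity of the Period Map (Thm.~\ref{surjectivity}) together with Local Torelli (Thm.~\ref{localtorelli}), one varies a marked K3 surface together with a class in its positive cone and shows that K\"ahlerness is both an open and a relatively closed condition on the classes of a fixed chamber, so that, the chamber being connected, every class in it is K\"ahler; alternatively one may invoke the fact that every K3 surface is K\"ahler \cite{ek3sk} (or the Demailly--P\u{a}un description of the K\"ahler cone of a compact K\"ahler manifold) in a limiting argument. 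Either way this is the crux; the full details, together with the reflection-group bookkeeping, are carried out in \cite[Cor.~VIII.3.9]{bpv} and \cite[Sect.~4.2]{iqfaag}, with the identification of the walls with genuine rational curves using \cite[Prop.~VIII.3.7]{bpv}.
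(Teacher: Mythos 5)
The first thing to note is that the paper does not prove this statement at all: Thm.~\ref{theorem:Kahler2} is quoted verbatim from \cite[Cor.~VIII.3.9]{bpv}, in contrast with its projective analogue Thm.~\ref{theorem:Kahler}, for which the paper does give a Nakai--Moishezon argument. So there is no ``paper proof'' for you to diverge from, and your write-up in fact supplies more than the text does. Your easy inclusion is correct, and your lattice-theoretic reduction is sound: Riemann--Roch makes every $(-2)$-class $\pm$effective, an effective $(-2)$-class decomposes into irreducible curves that are either smooth rational $(-2)$-curves or have nonzero class in $\overline{\calC}_S$, and the light-cone lemma \cite[Cor.~IV.7.2]{bpv} then identifies the right-hand set with the chamber of $\calC_S$ (for the walls $\delta^{\perp}$, $\langle\delta,\delta\rangle=-2$) containing the K\"ahler classes. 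Do note that this step, and even the assertion that an irreducible curve of non-negative square lies in $\overline{\calC}_S$ rather than $-\overline{\calC}_S$, quietly uses the existence of at least one K\"ahler class, i.e.\ Siu's theorem \cite{ek3sk}; you should make that input explicit, since here $S$ is not assumed projective.

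The only genuine gap is the one you flag yourself: that the entire chamber consists of K\"ahler classes. As written, your ``open and relatively closed on the chamber'' deformation sketch is a heuristic rather than an argument --- relative closedness of K\"ahlerness is precisely the delicate point (limits of K\"ahler classes are in general only nef), and the real ingredient is an analytic, K\"ahler analogue of the Nakai--Moishezon criterion for surfaces (in modern language, results of Buchdahl--Lamari or Demailly--P\u{a}un type), which neither Local Torelli (Thm.~\ref{localtorelli}) nor Surjectivity (Thm.~\ref{surjectivity}) provides; indeed, in \cite{bpv} the surjectivity of the period map is established later in the chapter, partly using this description of the K\"ahler cone, so routing the proof through Thm.~\ref{surjectivity} risks circularity relative to that source. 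Delegating this crux to \cite[Cor.~VIII.3.9]{bpv} is perfectly consistent with what the paper itself does for the whole theorem, but you should present your argument honestly as a reduction of Thm.~\ref{theorem:Kahler2} to that cited transcendental fact, not as an independent proof.
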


From this and Thm. \ref{theorem:Kahler}, we see that if $S$ is projective then $\Amp(S) = \K(S) \cap \NS(S) \otimes \R$. Given this, most of the results on the ample cone that we saw in Sect. \ref{sect:amplecone} also hold for the K\"ahler cone. In particular, the action of the group $W_S$ (defined as before) extends to all of $H^2(S,\Z)$ and this action preserves the positive cone $\calC_S$. The closure of the K\"ahler cone $K(S)$ is then a fundamental domain for the action of $W_S$ on $\calC_S$ \cite[Prop. VIII.3.10]{bpv}.

\begin{remark} It is easy to see that these results on the K\"ahler cone imply the corresponding results about the ample cone, so many references choose to focus on the K\"ahler cone first. However, we find the ample cone to be a conceptually simpler object to study, so we decided to reverse the order  (in particular, the K\"ahler cone is always $20$ dimensional, whereas the dimension of $\Amp(S)$ depends upon $\NS(S)$, so we can find examples where $\Amp(S)$ is small enough to write down explicitly).
\end{remark}

\section{Further Reading}

For the interested reader, more detailed information about K3 surfaces and the period map, including proofs of the Torelli Theorems, may be found in Chap. VIII of the book by Barth, Hulek, Peters and van de Ven \cite{bpv}. For a more in-depth discussion of the construction of moduli spaces of K3 surfaces and their compactifications, we recommend the article by Gritsenko, Hulek and Sankaran \cite{mk3sism}. Further information about the moduli space of polarized K3 surfaces and its compactifications (especially the Baily-Borel compactification) may also be found in the book by Scattone \cite{cmsak3s}. An excellent overview of the theory of degenerations may be found in the survey paper by Friedman and Morrison \cite{bgd1}. The best reference for the theory of lattice polarized K3 surfaces and their moduli is still probably Dolgachev's original paper \cite{mslpk3s}. For more information on the theory of elliptic surfaces, Miranda's book \cite{btes} is an excellent reference. Finally, readers interested in learning more about the ample and K\"{a}hler cones of K3 surfaces can consult the chapter on the ample cone and K\"{a}hler cone in Huybrechts' lecture notes \cite[Chap. 8]{lok3s}.
\medskip

\noindent {\small \textbf{Acknowledgements.} A part of these notes were written while A. Thompson was in residence at the Fields Institute Thematic Program on Calabi-Yau Varieties: Arithmetic, Geometry and Physics; he would like to thank the Fields Institute for their support and hospitality.}

\section*{Appendix: Lattice Theory}
\addcontentsline{toc}{section}{Appendix: Lattice Theory}

In this appendix we present a short description of the lattice theory that is used in the preceding article. The main reference for this section will be \cite{isbfa}.

In this article, we use the word \emph{lattice} in the following sense.

\begin{definition}
A lattice is a pair $(L,\langle\cdot,\cdot\rangle)$ consisting of a finitely generated free $\mathbb{Z}$-module $L$ and an integral symmetric bilinear form $\langle \cdot,\cdot \rangle$ on $L$.
\end{definition}

Often we will suppress the bilinear form $\langle \cdot, \cdot \rangle$ and refer to a lattice simply as $L$. A lattice $L$ is called \emph{non-degenerate} if the $\R$-linear extension of the bilinear form $\langle \cdot, \cdot \rangle$ to the $\R$-vector space $L \otimes_{\Z} \R$ is non-degenerate. For the remainder of this appendix, we will assume that all lattices are non-degenerate.

A lattice $L$ has \emph{signature} $(m,n)$ if, for some basis $u_1, \dots, u_{m+n}$ of $L \otimes_\mathbb{Z} \mathbb{R}$, we have
\[ \langle u_i, u_j \rangle = \left\{ \begin{array}{cl} 1 & \mathrm{if}\ i = j \in\{ 1, \ldots, m\}, \\
-1 & \mathrm{if}\ i = j \in \{m+1,\ldots,m+n\}, \\
0 & \mathrm{if}\ i \neq j. \end{array} \right.\]
If $L$ is of signature $(m,0)$ we call it \emph{positive definite}, and if it has signature $(0,n)$ we say that it is \emph{negative definite}. If a lattice is neither positive nor negative definite, it is called \emph{indefinite}. If a lattice has signature $(m,1)$ we will call it \emph{hyperbolic}. 

Let $L$ be a lattice and $u_i$ a basis of $L$. Then the \emph{Gram matrix} of $L$ is the matrix of integers $g_{i,j} = \langle u_i,u_j \rangle$ and the \emph{discriminant} of $L$, denoted $\disc (L)$, is the absolute value of the determinant of the Gram matrix. Obviously the Gram matrix depends upon the basis chosen, but the discriminant is independent of basis. 

A lattice is called \emph{even} if for every $u$ in $L$,
\[\langle u,u \rangle \equiv 0 \bmod 2.\]
For instance, a root lattice of ADE type is a positive definite even lattice. When dealing with K3 surfaces, all relevant lattices are even. 

A lattice is called \emph{unimodular} if it has discriminant $1$. Up to isomorphism, there is a single even unimodular rank 2 lattice of signature $(1,1)$, which has Gram matrix for some basis given by
\[\left( \begin{matrix} 0 & 1 \\ 1 & 0 \end{matrix} \right).\]
This lattice is called the \emph{hyperbolic plane} and, depending on the author, is denoted $U$ or $H$. We will denote it by $H$.

Now suppose that $L$ and $M$ are two lattices and that $L$ embeds into $M$. Then $L$ is said to be a \emph{sublattice} of $M$. This embedding is called \emph{primitive} if the quotient $M/L$ is torsion-free. Similarly, an element $u \in M$ is called \emph{primitive} if the sublattice of $M$ generated by $u$ is primitively embedded in $M$.

Given a lattice $L$, we may define a second lattice $L^*$, called the \emph{dual lattice} of $L$, as follows. Consider the tensor product $L \otimes_\mathbb{Z} \mathbb{Q}$, with bilinear form induced by the $\mathbb{Q}$-linear extension of $\langle\cdot , \cdot \rangle$. Then define $L^*$ to be the subgroup of $L \otimes_\mathbb{Z} \mathbb{Q}$ made up of elements $v$ which satisfy $\langle v,u \rangle \in \mathbb{Z}$ for all $u \in L$, equipped with the integral binear form induced by $\langle \cdot,\cdot \rangle$. Note that $L$ is a sublattice of $L^*$.

For even lattices $L$, we may use this to define a more refined version of the discriminant, called the \emph{discriminant lattice} of $L$. This is given by the finite group
\[A_L:=L^*/L.\]
This group is equipped with a quadratic form and a bilinear form as follows: take $u,v \in L^*$ and let $\overline{u},\overline{v}$ be their images in $A_L$, then define
\[q_L(\overline{u}) = \langle u,u \rangle \bmod 2 \mathbb{Z}\]
and 
\[b_L(\overline{u},\overline{v}) = \langle u,v \rangle \bmod \mathbb{Z}.\]
Note that if $u,v \in L$, then the fact that $L$ is an even lattice implies that $q_L(\overline{u}) = 0$ and $b_L(\overline{u},\overline{v}) =0$, so $q_L$ and $b_{L}$ are well-defined. The group $A_L$ is finite and $|A_L| = \disc (L)$. 

The invariant $A_L$ is obviously finer than just the discriminant of the lattice, but its true strength is made evident by the following proposition of Nikulin.

\begin{proposition}\textup{\cite[Cor. 1.13.3]{isbfa}}
\label{proposition:uniqueness}
Let $L$ be an even indefinite lattice of signature $(m,n)$ and rank $m+n$, with discriminant lattice $A_L$. Let $\ell(L)$ denote the minimal number of generators of $A_L$. If $\ell(L) \leq m+n-2$, then any other lattice with the same rank, signature and discriminant lattice is isomorphic to $L$.
\end{proposition}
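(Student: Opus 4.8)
The plan is to deduce the statement from the theory of the genus of an integral lattice, so I would first recall that datum. Write $r = m+n$ for the rank. The case $r = 2$ is immediate: indefiniteness forces signature $(1,1)$, and the hypothesis $\ell(L) \le 0$ forces $A_L$ to be trivial, so $L$ is even unimodular of signature $(1,1)$ and hence isomorphic to $H$ by the classification recalled above; the same applies to any $L'$ with the same invariants. So I may assume $r \ge 3$ from now on, and the argument then has two parts.

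The first part is to show that two even lattices $L$ and $L'$ with the same signature $(m,n)$ and isomorphic discriminant lattices $A_L \cong A_{L'}$ automatically lie in the same genus, that is, $L \otimes \mathbb{R} \cong L' \otimes \mathbb{R}$ and $L \otimes \mathbb{Z}_p \cong L' \otimes \mathbb{Z}_p$ for every prime $p$. The real isomorphism is clear since both have signature $(m,n)$. For the $p$-adic statement one invokes the local classification of even lattices over $\mathbb{Z}_p$: for odd $p$ an even $\mathbb{Z}_p$-lattice is determined up to isomorphism by its rank together with the finite quadratic form induced on its discriminant group, and an analogous but more delicate statement holds at $p = 2$, where one must in addition keep track of the signature modulo $8$. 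The point is that this last datum is not independent: Milgram's formula expresses the signature modulo $8$ through the Gauss sum of the discriminant form $q_L$, so it is already encoded by $A_L$. Hence all local invariants of $L$ and $L'$ agree and they lie in the same genus; this is essentially the content of \cite{isbfa}.

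The second part is to show that, under the hypothesis $\ell(L) \le r - 2$, the genus of $L$ consists of a single isomorphism class; combined with the first part this gives $L' \cong L$. For each prime $p$ one has $\ell(A_L \otimes \mathbb{Z}_p) \le \ell(A_L) \le r-2$, and inspecting the Jordan decomposition of $L \otimes \mathbb{Z}_p$ this means its unimodular part has rank at least $2$. Together with $r \ge 3$ and indefiniteness, this is precisely the situation in which strong approximation for the spin group applies: by Eichler's theorem the spinor genus of an indefinite lattice of rank $\ge 3$ coincides with its isomorphism class, and the local conditions just extracted force the genus of $L$ to comprise a single spinor genus. Hence the genus has class number one, and $L' \cong L$.

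The hard part will be the second part. Everything in the first part is a finite, if somewhat fiddly, computation with $p$-adic lattices and discriminant forms, whereas the second part rests on the genuinely arithmetic machinery of strong approximation for spin groups (Eichler, Kneser). This is the step that actually uses both indefiniteness and the bound on $\ell(L)$, and it cannot be dispensed with: definite lattices, and indefinite ones violating the bound on $\ell$, can have many classes in their genus.
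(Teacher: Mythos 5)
The paper itself gives no proof of this proposition: it is quoted directly from Nikulin \cite[Cor. 1.13.3]{isbfa}, and your sketch reconstructs exactly the argument that Nikulin's result rests on — equality of signature and discriminant form determines the genus, and indefiniteness together with $\ell(L)\le m+n-2$ forces one class per genus via spinor-norm computations and Eichler's strong approximation theorem — so you are taking the same route as the cited source, and the overall structure (including treating rank $2$ separately, since Eichler needs rank $\ge 3$) is sound. Two details deserve tightening if you were to write this out. First, your statement of the odd-$p$ local classification is slightly too strong: a $\mathbb{Z}_p$-lattice is \emph{not} determined by rank and discriminant form alone (two unimodular $\mathbb{Z}_p$-lattices of equal rank can differ by a non-square unit determinant); you must also match determinants, which is automatic in your situation because $\det L=(-1)^n\lvert A_L\rvert$ is fixed by the given invariants, but the bookkeeping should be said. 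Second, in the single-spinor-genus step the existence of a unimodular Jordan block of rank $\ge 2$ suffices at odd $p$ unconditionally, but at $p=2$ it suffices only because $L$ is even, so that this block is an even $2$-adic unimodular lattice, whose orthogonal group has spinor norm containing all $2$-adic units; evenness is genuinely used there and is worth flagging.
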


\subsection*{Overlattices}
\addcontentsline{toc}{subsection}{Overlattices}

Now assume that $L$ and $M$ are two even lattices of the same rank, such that $L$ embeds inside of $M$. Then we say that $M$ is an \emph{overlattice} of $L$. If we begin with a lattice $M$, then it is easy to compute all possible sublattices of maximal rank of $L$, but the problem of computing all possible overlattices of $L$ is more subtle. It is solved by the following theorem:

\begin{theorem} \textup{\cite[Prop. 1.4.1]{isbfa}}
\label{thm:overlattices}
Let $L$ be an even lattice. Then there is a bijection between subgroups $G$ of $A_L$ on which the form $q_L$ satisfies $q_L(u) = 0$ for all $u \in G$ and overlattices $L_G$ of $L$.

Furthermore, the discriminant form of the lattice $L_G$ associated to the subgroup $G$ is given by the form $q_L$ restricted to $G^\perp/G$, where orthogonality is measured with respect to $b_L$.
\end{theorem}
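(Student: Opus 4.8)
The plan is to exhibit the claimed bijection explicitly, in both directions, and then read off the discriminant form of the resulting overlattice. Throughout I write $\pi\colon L^* \to A_L$ for the quotient map.

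First I would observe that every overlattice $M$ of $L$ automatically satisfies $L \subseteq M \subseteq L^*$. Indeed, since $M$ has the same rank as $L$, each $m \in M$ lies in $L \otimes_{\Z} \mathbb{Q}$; and since the bilinear form on $M$ is integral and contains $L$, we get $\langle m,u\rangle \in \mathbb{Z}$ for every $u \in L$, i.e. $m \in L^*$. Thus $M/L$ is a subgroup $G \subseteq A_L = L^*/L$, and for $m \in M$ the evenness of $M$ gives $q_L(\pi(m)) = \langle m,m\rangle \bmod 2\mathbb{Z} = 0$; so $G$ lies among the subgroups of $A_L$ on which $q_L$ vanishes. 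This is the map $M \mapsto G$.

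Next I would build the inverse. Given a subgroup $G \subseteq A_L$ with $q_L|_G = 0$, set $L_G := \pi^{-1}(G) \subseteq L^*$, a finite-index extension of $L$. The key point is that $L_G$ is again an even lattice, i.e. that the form restricted to $L_G$ takes integral values: for $u,v \in L_G$ the classes $\pi(u),\pi(v),\pi(u+v)$ all lie in $G$, so expanding $0 = q_L(\pi(u+v)) \equiv \langle u,u\rangle + 2\langle u,v\rangle + \langle v,v\rangle \bmod 2\mathbb{Z}$ and using $\langle u,u\rangle,\langle v,v\rangle \in 2\mathbb{Z}$ forces $2\langle u,v\rangle \in 2\mathbb{Z}$, hence $\langle u,v\rangle \in \mathbb{Z}$; evenness of $L_G$ is then immediate from $q_L(\pi(u)) = 0$. (Note this computation also shows $b_L$ vanishes on $G\times G$, so $G \subseteq G^\perp$.) It is then routine that $M \mapsto M/L$ and $G \mapsto L_G$ are mutually inverse, since $L_G/L = G$ and, conversely, any overlattice $M$ equals $\pi^{-1}(M/L)$.

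Finally I would compute the discriminant form of $M = L_G$. I would first identify the dual: $M^* = \{x \in L\otimes_{\Z}\mathbb{Q} : \langle x, M\rangle \subseteq \mathbb{Z}\}$, and since $L \subseteq M$ this forces $M^* \subseteq L^*$; rewriting the defining condition via $\langle x,m\rangle \equiv b_L(\pi(x),\pi(m)) \bmod \mathbb{Z}$ shows $x \in M^*$ if and only if $b_L(\pi(x),g) = 0$ for all $g \in G$, i.e. $\pi(x) \in G^\perp$. Hence $M^* = \pi^{-1}(G^\perp)$, and therefore
\[ A_M = M^*/M = \pi^{-1}(G^\perp)/\pi^{-1}(G) \cong G^\perp/G. \]
Under this identification $q_M$ corresponds to $q_L$ restricted to $G^\perp/G$: for $x \in \pi^{-1}(G^\perp)$ one has $q_M(x+M) = \langle x,x\rangle \bmod 2\mathbb{Z} = q_L(\pi(x))$, and replacing $x$ by $x + g'$ with $g' \in M$ (so $\langle g',g'\rangle \in 2\mathbb{Z}$ and $\langle x,g'\rangle \in \mathbb{Z}$) shows the result depends only on the class of $\pi(x)$ in $G^\perp/G$, so the form is well defined and equals $q_L|_{G^\perp/G}$.

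I expect the only genuinely delicate point to be the bookkeeping in the last paragraph, namely keeping straight the two different moduli — $q_L$ taking values in $\mathbb{Q}/2\mathbb{Z}$ while $b_L$ takes values in $\mathbb{Q}/\mathbb{Z}$ — when showing both that $L_G$ is integral and that the induced form descends to $G^\perp/G$. Once the role of evenness in linking these two is pinned down, everything else is formal manipulation of the chains $L \subseteq M \subseteq L^* \subseteq \tfrac1{\disc(L)}L$.
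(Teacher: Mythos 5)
Your proof is correct and complete: the correspondence $M \mapsto M/L$, $G \mapsto \pi^{-1}(G)$, the observation that isotropy of $G$ is exactly what makes $\pi^{-1}(G)$ an even integral lattice, and the identification $M^* = \pi^{-1}(G^\perp)$ yielding $A_{L_G} \cong G^\perp/G$ with the restricted quadratic form are precisely the standard argument. The paper itself gives no proof, simply citing Nikulin's Prop. 1.4.1, and your argument is essentially Nikulin's original one, so there is nothing to correct or add.
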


The main practical use of this proposition is to determine when a specific lattice is primitively embedded in another. In particular, if a lattice $L$ has no non-trivial overlattices, then any embedding of $L$ into another lattice $M$ must be primitive. However, it can also be used to explicitly compute the possible overlattices of a given lattice, as illustrated by the next example.

\begin{example}
\label{example:HE7E7}
Let $L$ be the lattice $H \oplus (-E_7) \oplus (-E_7) \oplus (-A_3)$. Then $A_L$ is isomorphic to $\mathbb{Z}/2 \oplus \mathbb{Z}/2 \oplus \mathbb{Z}/4$ with generators $u,v,w$ respectively. It may be checked explicitly that 
\[q_L(u) = q_L(v) = \frac{1}{2}\]
and 
\[q_L(w) = \frac{5}{4},\]
and that $u,v,w$ are mutually orthogonal with respect to $b_L$.

One checks easily that the only nontrivial element $Q$ in $A_L$ with $q_L(Q) = 0$ is $Q = u + v + 2w$, which has order $2$. Thus $L$ has a unique overlattice $L_G$ of index $2$, corresponding to the subgroup $G$ of $A_L$ generated by $Q$. 

One may construct $L_G$ concretely in the following way: let $\hat{Q}$ be some element of $L^*$ whose image in $A_L$ is $Q$, then $L_G$ can be identified as the sublattice of $L\otimes_\mathbb{Z} \mathbb{Q}$ spanned by  $\hat{Q}$ and the image of $L$ in $L\otimes_\mathbb{Z} \mathbb{Q}$.

However, it is often simpler to use Prop. \ref{proposition:uniqueness} to identify the overlattice $L_G$. The subgroup $G$ of $A_L$ generated by $Q$ has orthogonal complement generated by $Q$ and $v + w$. Modulo $G$, this group is cyclic of order four and 
\[q_L(v+ w) = -\frac{1}{4}.\]
Therefore, the overlattice $L_G$ of $L$ associated to $G$ has rank $19$, signature $(1,18)$ and discriminant group of order $4$ with a generator satisfying $q_L(v+w) = -1/4$. 

Now, the lattice $M = H \oplus (-E_8) \oplus (-E_8) \oplus \langle -4 \rangle$ also has rank $19$, signature $(1,18)$ and discriminant group of order $4$ with generator $e$ satisfying $q_M(e) = -1/4$ so, by Prop. \ref{proposition:uniqueness}, the overlattice $L_G$ of $L$ must be isomorphic to the lattice $M$.
\end{example}

\begin{remark}
Note that if we replaced the lattice $H \oplus (-E_7) \oplus (-E_7) \oplus (-A_3)$ with the lattice $(-E_7) \oplus (-E_7) \oplus (-A_3)$ then we could not use Prop. \ref{proposition:uniqueness} here, since the second lattice is not indefinite.
\end{remark}

\bibliography{books}
\bibliographystyle{amsplain}

\end{document}